\documentclass{amsart}

\usepackage{comment}
\usepackage{mathtools,comment}
\usepackage{mathtools}
\usepackage{etex}
\usepackage[usenames,dvipsnames]{pstricks} 
\usepackage{epsfig}
\usepackage{graphicx,color}
\usepackage{geometry}
\usepackage[all]{xy}
\usepackage{amssymb}
\usepackage{amsmath}
\usepackage{cite}
\usepackage{fullpage}
\xyoption{poly}
\usepackage{url}
\usepackage{hyperref}
\usepackage{cleveref}
\numberwithin{equation}{subsection}
\numberwithin{figure}{subsection}
\usepackage{longtable}
\usepackage{verbatim}
\usepackage{tikz}
\usepackage{tikz-cd}
\usepackage{float}
\usetikzlibrary{arrows.meta,bending}
\usepackage{enumitem}
\usepackage{stmaryrd}
\usepackage{forest}

\usetikzlibrary{decorations.pathmorphing}

\newtheorem{theorem}{Theorem}[section]
\newtheorem*{theorem*}{Theorem}
\newtheorem{lemma}[theorem]{Lemma}
\newtheorem{proposition}[theorem]{Proposition}
\newtheorem{corollary}[theorem]{Corollary}
\newtheorem*{corollary*}{Corollary}
\theoremstyle{definition}

\newtheorem{remark}[theorem]{Remark}

\newtheorem*{question*}{Question}

\newtheorem*{steps*}{Answer/steps}

\newtheorem*{progress*}{Progress}
\newtheorem*{example*}{Example}

\newtheorem*{remark*}{Remark}

\newtheorem*{remarks*}{Remarks}
\newtheorem*{definition*}{Definition}

\newtheorem*{conj*}{Conjecture}

\crefname{lemma}{Lemma}{Lemmas}
\crefname{theorem}{Theorem}{Theorems}
\crefname{proposition}{Proposition}{Propositions}
\crefname{corollary}{Corollary}{Corollaries}
\crefname{definition}{Definition}{Definitions}
\crefname{remark}{Remark}{Remarks}
\crefname{example}{Example}{Examples}

\usepackage{calrsfs}

\usepackage[T1]{fontenc}
\usepackage{textcomp}
\usepackage{times}
\usepackage[scaled=0.92]{helvet}

\usepackage[all]{xy}

\usepackage{forest}
\usepackage{multirow,qtree}

\forestset{
	my tree/.style={
		for tree={
			parent anchor=north,
			child anchor=south,
			grow=90,
		},
		nice empty nodes up
	},
	nice empty nodes up/.style={
		for tree={calign=center},
		delay={where content={}{shape=coordinate,for parent={for children={anchor=south}}}{}}
	}
}

\newcommand{\QQ}{\mathbb{Q}}

\newcommand{\ZZ}{\mathbb{Z}}

\newcommand{\PP}{\mathbb{P}}

\newcommand{\cR}{\mathcal{R}}

\newcommand{\into}{\hookrightarrow}

\newcommand{\ga}{G^{\text{arith}}}

\newcommand{\ggeo}{G^{\text{geom}}}

\newcommand\restr[1]{\raisebox{-.5ex}{$|$}_{#1}}

\DeclareMathOperator{\Res}{Res}

\DeclareMathOperator{\Aut}{Aut}

\DeclareMathOperator{\Gal}{Gal}
\DeclareMathOperator{\sgn}{sgn}

\DeclareMathOperator{\Spec}{Spec}

\DeclareMathOperator{\id}{id}

\newcommand{\ilim}{\mathop{\varprojlim}\limits}

\newcommand{\llang}{\langle \langle}
\newcommand{\rrang}{\rangle \rangle}

\title{Arboreal Galois Groups of a PCF Map with Strictly Pre-periodic Critical Points}

\author{\"{O}zlem Ejder}
\address{\"{O}zlem Ejder, Koc University, Faculty of Science, Rumeli Feneri Yolu 34450
Sarıyer, \.{I}stanbul, T\"{u}rkiye}
\email{ozejder@ku.edu.tr}

\author{Zofia Go\l aska}
\address{Zofia Go\l aska, Faculty of Mathematics and Computer Sciences. Adam Mickiewicz University, Uniwersytetu Poznańskiego 4, 61-614 Pozna\'{n}, Poland}
\email{zofia.golaska@amu.edu.pl}

\author{Yasemin Kara}
\address{Yasemin Kara, Bogazici University, Faculty of Science, Mathematics Department, 34342 Bebek, Istanbul, Türkiye}
\email{yasemin.kara@bogazici.edu.tr}

\author{Leonie Nienhaus}
\address{Leonie Nienhaus, Institut für Mathematik, Johann Wolfgang Goethe-Universit\"{a}t, Robert-Mayer-Str. 6-8, 60325 Frankfurt am Main, Germany}
\email{nienhaus@math.uni-frankfurt.de}

\author{\"Ozge \"Ulkem}
\address{\"Ozge \"Ulkem, Academia Sinica, Institute of Mathematics, 6F, Astronomy-Mathematics Building, No. 1, Sec. 4, Roosevelt Road, Da-an, Taipei 106319, Taiwan}
\email{ozgeulkem@as.edu.tw}

\subjclass[2020]{11G32, 12F10, 37P05, 37P15}

\date{}

\keywords{Arboreal Galois group, Iterated Monodromy Groups.}

\begin{document}

\begin{abstract}
    We study the arithmetic and geometric iterated monodromy groups associated to the postcritically finite (PCF) quadratic rational function $f(x)=\frac{2}{(x-1)^2}$ defined over a number field $k$, whose critical points are both strictly pre-periodic. We give explicit recursive descriptions of the topological generators of the geometric iterated monodromy group of $f$  and show that the arithmetic iterated monodromy group has Hausdorff dimension zero.
    We describe an explicit criterion to determine the values $a\in k$ for which the associated arboreal Galois group achieves its maximum possible size. In particular, we show that maximality of the arboreal Galois group can already be verified at level four, which is computationally accessible.
 Finally, we determine the intersection of the constant field of the arithmetic iterated monodromy group with $k(\mu_{2^{\infty}})$, providing the first full study of a PCF quadratic map with non-abelian constant field.

   \end{abstract}

\date{}
\maketitle
\section{Introduction}\label{intro}

Let $f: \PP_{k}^1\to \PP_{k}^1$ be a rational map of degree $d$ defined over a number field $k$ and fix an algebraic closure $\bar{k}$ of $k$. 
Suppose $a \in k$ is a base point, such that $f^n(x)=a$ has $d^n$ distinct solutions for all $n\geq 1$. The preimages of $a$ under the iterates of $f$ naturally form a regular rooted $d$-ary tree $T_a$: the root is $a$, the vertices at level $n$ are the solutions in $\bar{k}$ to $f^n(x)=a$, and two vertices are connected if $f$ maps one to the other. 
The absolute Galois group $G_k = \Gal(\bar{k}/k)$ acts on this tree, giving rise to a representation
\[ 
\rho_{f,a} : G_k \to \Aut(T_a), 
\]
called the arboreal Galois representation associated to $(f,a)$. The image of this representation, the arboreal Galois group $G_{a}(f)$, 
encodes deep arithmetic information about the iterated preimages of $a$ under $f$. As $a$ varies over $k$, the group $G_{a}(f)$ changes, but it is always contained in a single "generic" group - the arithmetic iterated monodromy group $\ga(f)$ - which can be thought of as the arboreal Galois group for a transcendental base point. 

A central question in arithmetic dynamics is: for which $a \in k$ does $G_{a}(f)$ achieve this maximum, i.e. equal to $\ga(f)$? 

These groups were first studied by Odoni in the 1980s \cite{Odoni85, Odoni2, Odoni88} for arithmetical and dynamical purposes, and have since been studied extensively \cite{Stoll, Jones-Manes, arithmeticbasilica, Arborealcubic, BHL, FP20, BEK, BelyiEjder}. A particularly interesting setting is when $f$ is postcritically finite (PCF), i.e., when the critical points of $f$
 have finite orbits. In this case, the tower of extensions (given by the splitting fields of $f^n(x)-a$ for $n\geq 1$) has finitely many ramified primes. It is known that for PCF maps, the arboreal Galois groups have infinite index in $\Aut(T)$, see for example \cite[Theorem 3.1]{Jonessurvey}.
 Most explicit descriptions of the associated Galois groups have been obtained for PCF polynomials \cite{Pinkpolyn, AH-img} . The rational function case remains largely open, with only recent work beginning to address it \cite{EKO, Ejder26, Pinkrational, BDcolliding}.

This article examines the PCF quadratic rational function $f(x)=\frac{2}{(x-1)^2}$, whose critical points are both strictly pre-periodic, a case not previously studied. For this map, the critical points are $1$ and $\infty$, and the branch points are $\infty$ and $0$. The critical points of $f$ have the following dynamics:
\begin{align*}
\begin{gathered}
\xymatrix{
	1 \ar[r]^2 & 
	\infty \ar[r]^2 &
	0 \ar[r] & 
	2, \ar@(ur,dr)[]
}
\end{gathered}
\end{align*}
where the label $2$ on an arrow indicates ramification.

We describe the structure of $\ga(f)$, bound the orders of its finite-level truncations, and characterize when the arboreal Galois groups $G_a(f)$ achieve their maximal size, i.e. when they are equal to the arithmetic iterated monodromy group. We denote the $n$th level of the arithmetic iterated monodromy group as $\ga_n$. 

For  $n\geq 1$, let $K_{n,a}$ be the splitting field of $f^n(x)-a$ over $k$ and $G_{n,a}$ be the Galois group of $K_{n,a}$ over $k$. Then $G_a(f)$ is isomorphic to the inverse limit of the groups $G_{n,a}(f)$. We now state one of our main results.

\begin{theorem}\label{intromain}
The arboreal Galois group $G_a(f)$ equals the arithmetic iterated monodromy group $\ga(f)$ if and only if they agree on the fourth level i.e. $G_{4,a}(f) = \ga_4(f)$.	
\end{theorem}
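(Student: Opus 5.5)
Since $G_a(f)\subseteq \ga(f)$ always holds, restricting to level four gives the forward direction at once. The content is the converse. I would argue by induction on $n \ge 4$ that $G_{n,a}(f)=\ga_n(f)$ implies $G_{n+1,a}(f)=\ga_{n+1}(f)$. Since $G_{n,a}\subseteq \ga_n$ for every $n$, it is enough to compare orders. Write
\[ |\ga_{n+1}| = |\ga_n|\cdot |\ker(\ga_{n+1}\to\ga_n)|, \]
and view the kernel as a subspace of $\mathbb{F}_2^{2^{n}}$: it acts by flipping the two preimages of each level-$n$ vertex. The plan is to compute this kernel dimension from the recursive description of the generators of $\ggeo(f)$ and of $\ga(f)$, which I expect to be given earlier through wreath recursion. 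From that description I want the following shape. For $n$ large the kernel is spanned by the level-$(n+1)$ conjugates of the generators at the branch cycle of length one below level $n$. Its dimension should follow a recursion of the form $d_{n+1}=2d_n$ plus a bounded correction, or perhaps $d_{n+1}=d_n+c$; this is consistent with Hausdorff dimension zero.

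On the Galois side, $K_{n+1,a}=K_{n,a}\bigl(\sqrt{\delta_\beta}:\ f^n(\beta)=a\bigr)$. Here each new pair of roots $x$ with $f(x)=\beta$ solves $(x-1)^2=2/\beta$, so it is obtained by adjoining $\sqrt{2\beta}$. So $[K_{n+1,a}:K_{n,a}]$ is the order of the subgroup of $K_{n,a}^\times/(K_{n,a}^\times)^2$ generated by the classes $\{2\beta\}$. The key step is a Kummer-theoretic lemma. Assume $G_{n,a}=\ga_n$ with $n\ge 4$. Then every multiplicative relation $\prod_{\beta\in S}(2\beta)\in (K_{n,a}^\times)^2$ that does not already hold "generically" (that is, over $k(t)$ with $t$ transcendental) is impossible. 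To see this I would take norms of products $\prod_{\beta\in S}\beta$ down to intermediate fields $K_{m,a}$. Because the tree acts through the maximal group, the orbit of $S$ under $\ga_n$ can be used to show that any nontrivial new relation forces a relation at a lower level. Concretely, the norm of $\beta$ from level $m$ to level $m-1$ is $\prod_{f(\beta)=\gamma}\beta = 2/\gamma$ up to sign, using $x^2-2x+1-2/\gamma$. Repeated descent then brings any putative square relation down to a statement about elements of $K_{4,a}$, or equivalently about $a$, $a-2$, $2$, $-1$ (or similar quantities) being squares in some level-four field. Under level-four maximality this is ruled out.

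The obstacle is precisely that descent, because the relevant quadratic subextensions are non-abelian (the constant field is non-abelian). I would handle it with the standard strategy of Odoni and Stoll type results, as adapted in the rational PCF case of the cited papers: identify the quadratic subfields $K_{n,a}(\sqrt{\cdot})$ with fixed fields of index-two subgroups of $\ga_{n+1}$ containing the kernel complement. Then show via the recursion that the set of index-two subgroups of $\ga_{n+1}$ not coming from $\ga_n$ is permuted transitively enough by $\ga_n$ that the failure of any one is equivalent to the failure of one specific "new" square class. Finally, by a branch-point/PCF argument, that class is seen to come from level $4$ for all $n\ge4$, since the critical orbit $1\to\infty\to0\to2$ stabilizes after three steps; this is why four is the threshold. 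This completes the induction.
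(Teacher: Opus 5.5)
Your forward direction is fine, but the converse has a genuine gap. Everything rests on your ``Kummer-theoretic lemma''. Given $G_{n,a}=\ga_n$, that lemma is essentially a restatement of $[K_{n+1,a}:K_{n,a}]=[K_{n+1}:K_n]$, and you never prove it. The tools you propose for it also do not work as stated. Index-two subgroups of $\ga_{n+1}$ are normal, so conjugation fixes each of them and there is nothing for $\ga_n$ to permute. If you meant the quadratic subextensions of $K_{n+1,a}/K_{n,a}$, a Stoll-type module argument would have to be redone for a kernel of order at most $4$ (\cref{G arithmetic order}) inside $\mathbb{F}_2^{2^n}$, and you do not attempt this. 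The descent is also miscomputed. The two preimages of $\gamma$ have product $1-2/\gamma=(\gamma-2)/\gamma$ (\cref{basic properties of root calculation}), not $2/\gamma$. So norms of your Kummer generators $2\beta$ produce factors $\gamma-2$, and they do not simply push a relation down to level four. Nor does ``the critical orbit stabilizes after three steps'' explain the threshold. In fact $\sqrt{2t}\in K_1$, $\sqrt{2(t-2)}\in K_2$ and $i\in K_3$ (\cref{properties of roots}, \cref{property of i}); the level $4$ comes from $\sqrt2\in K_4$ (\cref{property of squareroottwo}).

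The missing idea is the Frattini argument, which makes a level-by-level induction unnecessary. Since $\ga$ is pro-$2$, $\Phi(\ga)$ is the intersection of its index-two subgroups. Hence $\ga/\Phi(\ga)=\Gal(L/k(t))$, where $L$ is the compositum of all quadratic subextensions of $K_\infty/k(t)$. Moreover, any closed subgroup $H$ with $H\Phi(\ga)=\ga$ equals $\ga$.

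The paper pins down $L$ by ramification. By \cref{discriminant}, $\Delta_n=c_nt^{a_n}(2-t)^{b_n}$ with $c_n$ a power of $2$ up to sign. So every quadratic subextension has the form $k(t)(\sqrt{\pm2^mt^i(2-t)^j})$, and $L=k(t)(i,\sqrt2,\sqrt t,\sqrt{2-t})$ (\cref{Frattinithm}). Then \cref{K-infty} gives $L\subseteq K_4$. Hence $\Gal(K_\infty/K_4)\subseteq\Phi(\ga)$, so $G_{4,a}=\ga_4$ forces $G_a\Phi(\ga)=\ga$ and therefore $G_a=\ga$. This is how \cite[Theorem~1.3]{BGJT25s} is applied.

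Your claim that the obstructing square class ``comes from level $4$'' is exactly this statement. But it is the entire proof, and it needs the discriminant computation and the explicit elements of $K_4$. Your proposal supplies neither.
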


We also show that the arithmetic iterated monodromy group exhibits strong rigidity properties: its Hausdorff dimension is zero (\cref{Hdimension}), and it contains no odometers (\cref{Gaodometer}), i.e. no elements acting transitively on each level of the tree. In particular, we have the following result bounding the order of the arithmetic iterated monodromy group (\cref{orderGa}). See ~\cref{Ga} for details.

\begin{theorem}\label{ga bound}
	For any $n\geq 1$, the order of $\ga_n$ is at most $2^{2n}$. Moreover, for $k=\QQ$ and  $n \in \{4,5\}$, this upper bound is achieved. 
\end{theorem}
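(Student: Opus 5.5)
We bound $|\ga_n|$ by first bounding $|\ggeo_n|$ and then controlling the quotient $\ga_n/\ggeo_n$, which is a quotient of the Galois group of the constant field extension $k_n/k$ generated at level $n$.

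\textbf{Geometric part.} For the geometric group we use the recursive description of generators. The branch points are $0$ and $\infty$, and the post-critical set is $P=\{\infty,0,2\}$. So $\ggeo$ is topologically generated by elements $a,b,c$, corresponding to loops around $\infty,0,2$, with $abc=1$. The dynamics $1\mapsto\infty\mapsto 0\mapsto 2\circlearrowleft$ give wreath recursions of the following shape, up to conjugation:
\begin{itemize}
\item $a=(1,b)\sigma$, since $\infty$ is a critical value with preimage the critical point $1$;
\item $b=(1,c)\sigma$, or more precisely $b=(c,1)\sigma$ up to conjugacy, since $0$ is a critical value with critical preimage $\infty$;
\item $c=(c,1)$ or similar, since $2$ is fixed with its other preimage non-postcritical.
\end{itemize}
From this recursion we show by induction that $|\ggeo_n|\le 2^{2n-1}$ or comparable. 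We use the restriction $\ggeo_{n}\to\ggeo_{n-1}$ and bound the kernel. The kernel consists of elements acting only at level $n$, and it is generated by images of $c$-conjugates. Since $c$ acts at the deepest level only along a single ``spine'' (it is essentially a finitary-like element of bounded activity), the kernel at each level should have order at most $4$, and we aim to show $|\ker|\le 4$. Concretely, I would prove that $\ggeo$ is a bounded-automaton group in which $c$ and $c$-conjugates generate a subgroup whose level-$n$ part is elementary abelian of rank $\le 2$. This also yields Hausdorff dimension zero.

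\textbf{Arithmetic part.} $\ga$ is generated by $\ggeo$ together with the images of $G_k$ acting through the constant field. For $f=2/(x-1)^2$, the preimages introduce $\sqrt2$ and $\sqrt{-1}$ and their iterated square roots, so the constants lie in a $2$-power extension with few new roots per level. I would exhibit a normalizer element description: $\ga\subseteq N_{\Aut(T)}(\ggeo)$ restricted to the normalizer elements compatible with the recursion. I would then count that the normalizer modulo $\ggeo$ grows by at most a factor $2$ per level. Combining this with the geometric bound gives $|\ga_n|\le 2^{2n}$.

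\textbf{Sharpness and main obstacle.} For $k=\QQ$ and $n=4,5$, I would compute $\ga_n$ directly, e.g.\ by computing the Galois group of $f^n(x)-t$ over $\QQ(t)$ in Magma, or via its specialization at a suitable $t$. I would then check that its order is $2^8$, respectively $2^{10}$. The main obstacle is the kernel bound in the geometric step: proving precisely that each new level contributes at most a factor $4$ requires careful tracking of the recursion, in particular showing that commutators of conjugates of $c$ do not produce extra independent elements. I would first try to find an explicit normal form for elements of $\ggeo_n$ as products $w\cdot c^{\epsilon}\cdot c'^{\delta}$, with $w$ lifted from level $n-1$.
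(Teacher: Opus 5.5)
There is a genuine gap, and it starts with the wreath recursion. The generator at $p\in P$ is built from the points of $f^{-1}(p)$, not from $f(p)$. Here $f^{-1}(\infty)=\{1\}$ with $1$ critical and $1\notin P$; $f^{-1}(0)=\{\infty\}$ with $\infty$ critical; and $f^{-1}(2)=\{0,2\}\subset P$. So the correct recursion is $b_\infty\sim\sigma$, $b_0\sim(b_\infty,\id)\sigma$, $b_2\sim(b_0,b_2)$, which the paper realizes by $a_1=\sigma$, $a_2=(a_3^{-1},a_2^{-1})\sigma$, $a_3=(a_2,a_3)$. Your recursion follows the forward orbit and treats the second preimage $0$ of $2$ as non-postcritical. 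It is also degenerate: $c=(c,1)$ forces $c=\id$, hence $b=\sigma$, $a=(1,\sigma)\sigma$, and $abc=(1,\sigma)\neq\id$.

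Your geometric growth target is wrong as well. With the correct model the paper uses the abelian normal subgroup $U$ (the normal closure of $a_2a_3^{-1}$, of index $4$) and $\llbracket G,G\rrbracket=\{(x,x^{-1}) : x\in U\}$ (of index $8$) to show $|\ggeo_n|=2^{n+2}$ for $n\ge 3$. So the kernel of $\ggeo_n\to\ggeo_{n-1}$ has order $2$, not $4$. This matters for your bookkeeping: a geometric factor $4$ per level together with a constant-field factor $2$ per level gives a bound of order $2^{3n}$, not $2^{2n}$. The constant part cannot be made bounded either, since over $\QQ$ one has $[\ga_4:\ggeo_4]=4$ and $[\ga_5:\ggeo_5]=8$. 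So your two estimates cannot combine to the stated bound.

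The arithmetic step is only asserted. The claim that the normalizer modulo $\ggeo$ grows by at most a factor $2$ per level is exactly what needs proof, and you give no mechanism for it. The paper never bounds the constant field separately; it bounds $|\ga_n|/|\ga_{n-1}|$ directly, as follows.
\begin{enumerate}
\item For $(x,y)\in\ga\cap(\Omega\times\Omega)$, conjugating $a_1=\sigma$ gives $(xy^{-1},yx^{-1})\sigma\in G$. By the criterion $(z,z^{-1})\in G\iff z\in U$, this yields $\rho:=yx^{-1}\in U$.
\item Conjugating $a_3=(a_2,a_3)$ by $(x,\rho x)$, and using that an element $(u,v)\in G$ is determined by $u$, shows that two admissible $\rho,\rho'$ for the same $x$ satisfy $\rho'^{-1}\rho\in C_U(xa_3x^{-1})=xC_U(a_3)x^{-1}$.
\item This centralizer has at most $2$ elements, because $|C_G(a_3)|=8$ and $\langle a_3\rangle\cap U=\{\id\}$.
\item Together with $\sigma\in G$, this gives $|\ga_n|\le 4|\ga_{n-1}|$, and the bound $2^{2n}$ follows from the machine computation $|\ga_4|=2^8$ as base case.
\end{enumerate}
Your sharpness step (a Magma computation at levels $4$ and $5$, with a specialization supplying the lower bound) matches the paper. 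What is missing is the correct model of $\ggeo$ and the structural input that actually controls the growth: $U$ abelian, uniqueness of second coordinates in $G$, and the centralizer bound.
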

While \cref{orderGa} gives the bound for all $n\geq 1$, its proof is based on calculating the order of $\ga_4$ over $k=\QQ$. Since we know that the Galois groups can only decrease under specialization, it suffices to exhibit a single value of
$a$ for which the maximum is achieved. We compute on Magma that for $a=5$ and $n=5$, the group $G_{a,n}(f)$ has order $2^{10}$. We note that over a general number field
$k$, the order may be strictly smaller.

Let $K_{\infty}$ denote the union of $K_{n}$, where $K_n$ is the splitting field of $f^n(x)-t$ over $k$ and $t$ is  transcendental over $k$.
Let $F$ denote the intersection of $K_{\infty}$ and $\bar{k}$. We call the field $F$ the constant field of $K_{\infty}$. \cite[Corollary~2.4]{HamblenJones} shows that for a quadratic map whose critical points are all periodic, the field $F$ contains $k(\mu_{2^{\infty}})$. Notice that the critical points of $f(x)=\frac{2}{(x-1)^2}$ are both strictly pre-periodic and in this case we show the following:

\begin{theorem}\label{introconstantfield}
	For $k=\QQ$, the intersection of the constant field $F$ and $k(\mu_{2^{\infty}})$ is $k(\mu_8)$.
\end{theorem}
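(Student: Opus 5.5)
We have to prove two inclusions: $\QQ(\mu_8)\subseteq F$, and $F\cap \QQ(\mu_{2^\infty})\subseteq \QQ(\mu_8)$.

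\emph{Lower bound.} We find explicit constants inside the first few splitting fields $K_n$ over $\QQ(t)$. Since $f(x)=\frac{2}{(x-1)^2}$, the roots of $f(x)=t$ are $x=1\pm\sqrt{2/t}$, so $K_1=\QQ(t)(\sqrt{2t})$. At the next level we solve $f(y)=1\pm\sqrt{2/t}$ and get $y=1\pm\sqrt{2/(1\pm\sqrt{2/t})}$. Multiplying the two inner radicands gives
\[
\frac{4}{1-2/t}=\frac{4t}{t-2}.
\]
Hence $K_2$ contains $\sqrt{t(t-2)}$. Pushing one or two levels further, products and quotients of the discriminant-type square roots (using the orbit $0\mapsto 2\mapsto 2$ and the fixed point $2$) should yield square roots that are constant, such as $\sqrt{-1}$ and $\sqrt{2}$. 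We will read these off from the discriminants of $f^n(x)-t$ for $n\le 3$, computed recursively with the standard discriminant formula for iterates. Together they give $\QQ(i,\sqrt2)=\QQ(\mu_8)\subseteq F$. This step is a finite explicit computation, and it can be cross-checked against the structure of $\ga_n$ and $\ggeo_n$ from \cref{Ga}: the quotient $\ga_n/\ggeo_n\cong\Gal(F_n/\QQ)$ has order $4$ at the relevant level.

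\emph{Upper bound.} Suppose $F\cap\QQ(\mu_{2^\infty})$ were strictly larger than $\QQ(\mu_8)$. Then it would contain $\QQ(\mu_{16})$, or at least a quadratic extension of $\QQ(\mu_8)$ inside $\QQ(\mu_{16})$. Every such field contains a cyclic quartic subextension of $\QQ$, for instance $\QQ(\zeta_{16}+\zeta_{16}^{-1})$, which is cyclic of degree $4$. Then $\Gal(F/\QQ)\cong \ga/\ggeo$ would have a cyclic quotient of order $4$ compatible with the known $\QQ(\mu_8)$ part. Moreover $\Gal(F/\QQ(\mu_8))$ would surject onto $\ZZ/2$ through a character whose restriction behaves like $\Gal(\QQ(\mu_{16})/\QQ(\mu_8))$.

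We will rule this out using the explicit recursive description of $\ga$ and $\ggeo$ (\cref{Ga}). The plan is to compute the abelianization of $\ga/\ggeo$, or at least its maximal abelian $2$-quotient. We expect this to be $(\ZZ/2)^2$, coming entirely from the $\QQ(\mu_8)$ part. If so, no cyclic quartic quotient exists, and the intersection, which is abelian over $\QQ$, must lie in $\QQ(\mu_8)$. This is consistent with the abstract's claim that $F$ is non-abelian: only its abelian part is at issue.

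\emph{Alternative for the upper bound.} If the group-theoretic computation is awkward, we can specialize. For a suitable $a\in\QQ$ we have $G_a(f)=\ga(f)$, e.g.\ $a=5$, checked at level $4$ via \cref{intromain}. Then $F\subseteq K_{\infty,a}$, and it suffices to show that $\sqrt{2+\sqrt2}\notin K_{\infty,a}$. Since the ramification is only at $2$ and the primes dividing $a(a-2)$, this could be checked by a local argument at $2$: bound the ramification or inertia structure of $K_{\infty,a}/\QQ$ at $2$ using the $2$-adic behaviour of $f$.

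\emph{Main obstacle.} We expect the upper bound to be the hardest step. It requires controlling the infinite tower, whereas the lower bound only involves finitely many levels, and we do not have a finite-level criterion for it. The most promising route is to show that $[\ga,\ga]\ggeo$ has index $4$ in $\ga$ at every level. We would try to prove this inductively from the recursive generators of \cref{Ga}, by showing that the abelianization of $\ga_n/\ggeo_n$ stabilizes from level $3$ onward.
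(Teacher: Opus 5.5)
\emph{Upper bound.} This is the decisive half, $F\cap\QQ(\mu_{2^\infty})\subseteq\QQ(\mu_8)$, and you have not proved it: you state an expectation, not an argument.

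The statement you hope to show, $\Gal(F/\QQ)^{\mathrm{ab}}\cong(\ZZ/2)^2$, is actually equivalent to the theorem. The reason is that $F/\QQ$ is unramified outside $2$, since $f$ and its branch locus $\{0,2,\infty\}$ have good reduction at odd primes. By Kronecker--Weber, the maximal abelian subextension of $F$ therefore lies in $\QQ(\mu_{2^\infty})$. So your plan restates the claim rather than reducing it.

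Nor does \cref{Ga} contain the recursive description of $\ga$ you want to induct on. It only gives $\ga\subseteq N_\Omega(G)$ and $|\ga_n|\le 4|\ga_{n-1}|$. Since $|G_n| = 2|G_{n-1}|$, this lets $[F_n:\QQ]$ double at every level, and it does so from $n=4$ to $n=5$. Finite-level data therefore cannot control the abelianization of the whole tower.

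The specialization route has the same difficulty. You have $F\subseteq K_{\infty,a}$ for every admissible $a$, so you do not need $G_a=\ga$. But excluding $\sqrt{2+\sqrt2}$ from the infinite tower $K_{\infty,a}$ is at least as hard, and ``a local argument at $2$'' is not yet one.

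The paper argues differently. It lets $\ga/G\cong\Gal(F/\QQ)$ act by conjugation on $G^{\mathrm{ab}}\cong\ZZ/2\times\ZZ/4$ (\cref{commutator of G}), whose automorphism group is $D_4$. It then cites \cite[Proposition~6.1]{Ejder26} for the claim that this action factors injectively through $\Gal(F\cap\QQ(\mu_{2^\infty})/\QQ)$. An abelian subgroup of $D_4$ has order at most $4$, and \cref{K-infty} forces equality.

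If you borrow that route, the injectivity is the delicate point. By the branch-cycle lemma, a lift of $g\in G_\QQ$ multiplies the class of each $a_j$ in $G^{\mathrm{ab}}$ by $\chi(g)$, the $2$-adic cyclotomic character. Since $G^{\mathrm{ab}}$ has exponent $4$, the action sees only $\chi(g)\bmod 4$. It is trivial on every $g$ fixing $i$, including those moving $\sqrt2\in F$. So in either approach you need an invariant finer than $G^{\mathrm{ab}}$.

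\emph{Lower bound.} The mechanism you describe also fails: discriminants cannot produce $i$ or $\sqrt2$.

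Group the roots of $f^n(x)-t$ in sibling pairs. Each pair contributes $(\alpha_{w1}-\alpha_{w2})^2=8/\alpha_w$, and the product over pairs of distinct parents is the square of an element of $\QQ(t)$. So for $n\ge 2$, $\Delta_n\equiv\prod_{|w|=n-1}\alpha_w$ modulo $\QQ(t)^{\times 2}$. Since $\alpha_{w1}+\alpha_{w2}=2$, you get $(\alpha_{w1}-2)(\alpha_{w2}-2)=\alpha_{w1}\alpha_{w2}$, and this makes the product equal to $1$ once $n-1\ge 2$.

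Thus $\Delta_1\equiv 2t$, $\Delta_2\equiv t(t-2)$, and $\Delta_n$ is a square in $\QQ(t)$ for all $n\ge 3$. No combination of discriminant square roots is a non-trivial constant.

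It also follows that $\ga_3$ lies in the kernel of $\sgn_3$ on $\Omega_3$, which has order $2^6=2|G_3|$. Hence $[F_3:\QQ]\le 2$ and $\sqrt2\notin K_3$, contrary to your ``$n\le 3$''.

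The constants come instead from non-symmetric products of the $\sqrt{2/\alpha_\ell}$. \Cref{property of i} gives $i\in K_3$, and $\sqrt2$ first appears in $K_4$ (\cref{property of squareroottwo}, verified by machine in the paper).
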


\subsection{Outline}

The  Galois groups $G_n=\Gal(K_n/k(t))$ form an inverse system whose inverse limit gives the arithmetic iterated monodromy group (IMG). Replacing $k(t)$  by $\bar{k}(t)$ gives the geometric iterated monodromy group $\ggeo(f)$. When $f$ is PCF, $\ggeo(f)$ is a finitely generated profinite group. 

We first describe $\ggeo(f)$, and use the fact that $\ga(f)$ normalizes $\ggeo(f)$ to describe $\ga(f)$.
In Section~\ref{sec: group theory for exa 7 of LMY paper}, we describe the topological generators of $G^{\text{geom}}(f)$ recursively and construct a model for the geometric iterated monodromy group. We describe the arithmetic iterated monodromy group of $f$ in Section~\ref{Ga}, in particular we prove \cref{ga bound}. Section~\ref{sec:discriminant} is reserved for the discriminant calculation of the iterates of $f$, which is an important ingredient in the proof of \cref{intromain}. Section~\ref{sec:proof} is devoted to the proof of \cref{intromain}.

\subsection{Acknowledgments}
This project began at the Women in Numbers Europe (WINE) 5 conference in Split. 
We would like to thank the organizers and the sponsors of WINE 5.
The first author was supported by the Scientific and Technological Research Council of T\"{u}rkiye (T\"{U}B\.{I}TAK) under Grant 124F203.
The second author was supported by the research grant SONATA 20 "Symmetries of curves in positive characteristic" UMO-2024/55/D/ST1/01377 awarded by National Science Centre, Poland.
The fourth author was funded by the Deutsche Forschungsgemeinschaft (DFG, German Research Foundation) TRR 326 \textit{Geometry and Arithmetic of Uniformized Structures}, project number 444845124.
The fifth author was supported by Academia Sinica Investigator Grant AS-IA-112-M01 and NSTC grant 113-2115-M-001-001.

\section{Background}\label{background}
In this section, we present the necessary background on automorphisms of regular rooted binary trees and iterated monodromy groups. One may refer to \cite{EKO} for a more detailed overview.

\subsection{The regular rooted binary tree}\label{sec:tree}

Let $T$ be the infinite regular rooted binary tree whose vertices are finite words over the alphabet $\{1,2\}$. The root of the tree is denoted by the empty word $()$. For any integer $n \geq 1$, let $T_n$ denote the finite rooted subtree consisting of vertices of length at most $n$. The set of words of length $n$ is called level $n$ of $T$, and its elements are called the vertices at that level. The vertices of $T_n$ at level $n$ are called leaves.

\subsection{Automorphism group of $T$}

A bijection between the vertices of two regular rooted binary trees $T$ and $T'$ that preserves the tree structure is called an isomorphism. We denote the group of automorphisms of $T$ by $\Omega := \Aut(T)$ and write $\Omega_n := \Aut(T_n)$ for the group of automorphisms of the finite tree $T_n$.
We embed $\Omega \times \Omega$ into $\Omega$ by identifying the complete subtrees rooted at level $1$ of $T$ with $T$ itself. The image of this embedding $\Omega \times \Omega \into \Omega$ consists of the automorphisms acting trivially on the first level.

The splitting of the exact sequence
\[
1 \to \Omega \times \Omega \to \Omega \to S_2 \to 1
\]
yields the semidirect product decompositions
\[
\Omega \simeq (\Omega \times \Omega) \rtimes S_2 \quad \text{and} \quad
\Omega_n \simeq (\Omega_{n-1} \times \Omega_{n-1}) \rtimes S_2.
\]
Equivalently, $\Omega$ and $\Omega_n$ have a wreath product structure:
\begin{equation}\label{eq:wr}
\Omega \simeq \Omega \wr S_2 \quad \text{and} \quad
\Omega_n \simeq \Omega_{n-1} \wr S_2
\end{equation}
for $n \geq 2$.

This isomorphism in \eqref{eq:wr} is induced by the two complete subtrees of $T_n$ at level $1$, each of which is a copy of $T_{n-1}$. Hence, we may write elements of $\Omega_n$ as $(u,v)\tau$ with $u,v \in \Omega_{n-1}$ and $\tau \in S_2$.

We have the following relation in $\Omega$ arising from the wreath product:
\begin{equation}\label{eq:relations}
(x_1, x_2)\tau (y_1, y_2)\tau' = (x_1 y_{\tau(1)}, x_2 y_{\tau(2)}) \tau \tau'.
\end{equation}

Let $\sigma = (12) \in S_2$. We also use $\sigma$ to denote the automorphism that permutes the two subtrees at level $1$ of $T_n$. With the above notation,
\begin{equation}\label{eq:sigma}
\sigma := (\id, \id)\sigma \in \Omega_n \quad \text{for } n \geq 2,
\end{equation}
where $\id$ denotes the identity automorphism in $\Omega_{n-1}$.

\subsection{Automorphisms on finite levels}

For every $n \geq 1$, we write $\pi_n$ for the natural projection
\begin{equation}\label{not:pi}
\pi_n : \Omega \to \Omega_n,
\end{equation}
which corresponds to restricting the action of an element of $\Omega$ to the subtree $T_n$ consisting of levels $0,1,\ldots,n$.

Similarly, for any $m \geq n$, we denote the natural projection $\Omega_m \to \Omega_n$ by $\pi_{m,n}$. We abuse notation and write $\pi_n$ whenever the domain is clear. The image of an element $w \in \Omega$ (or $\Omega_m$ for $m \geq n$) under $\pi_n$ is denoted by $w\restr{T_n}$.

Let $H$ be a subgroup of $\Omega$. For each $n \geq 1$, define $H_n := \pi_n(H) \subset \Omega_n$.

\subsection{Closed subgroups of $\Omega$}

Since $\Omega$ is the inverse limit of the system $(\Omega_n, \pi_n)_{n \geq 1}$, it carries the profinite topology. For $c_1, \ldots, c_k \in \Omega$, let $\llang c_1, \ldots, c_k \rrang$ denote the topological closure of the subgroup $\langle c_1, \ldots, c_k \rangle$ in $\Omega$. We say that a subgroup $H$ is topologically generated by $c_1, \ldots, c_k$ if
\[
H = \llang c_1, \ldots, c_k \rrang.
\]

If $H$ is a closed subgroup of $\Omega$, then $H$ is the inverse limit of the system $(H_n)_{n \geq 1}$. In particular, every closed subgroup of $\Omega$ is a pro-$2$ group.

\subsection{Signs}
For any $n \geq 1$, the group $\Omega_n$ acts faithfully on the $n$th level of the tree $T$, which allows us to embed $\Omega_n$ into the symmetric group $S_{2^n}$. We let $\sgn_n$ denote the sign of the induced permutation on level $n$, which defines a continuous homomorphism
\begin{equation}
\sgn_n\colon \Omega \to \{\pm 1\}.
\end{equation}

For $\alpha=(\alpha_0,\alpha_1)\tau \in \Omega$ and $n>1$,  we have $$\sgn_n(\alpha)=\sgn_{n-1}(\alpha_0)\sgn_{n-1}(\alpha_1)$$  since $\sgn_n(\sigma)=1$ for all $n>1$.

\subsection{Conjugation in $\Omega$}
The following lemma tells us when two automorphisms are conjugate in $\Omega$. See \cite[Lemma~4.3]{Ejder26} or \cite[Lemma 1.3.1]{Pinkpolyn}.

\begin{lemma}\label{conjugation rules in Aut T} Suppose $u, u', v, v' \in \Omega$.
\begin{enumerate}[label=(\roman*)]
    \item Let $\tau, \tau' \in \langle \sigma \rangle$. If $(u,v)\tau \sim (u',v')\tau'$, then $\tau = \tau'$.
    \item We can verify whether two automorphism which are trivial on the first level are conjugate via the criterion
    \[
    (u,v) \sim (u',v') \Longleftrightarrow
    (u \sim u' \text{ and } v \sim v') \text{ or }
    (u \sim v' \text{ and } v \sim u').
    \]
    \item For automorphisms that are non-trivial on the first level, we have the conjugation rule
    \[
    (u,v)\sigma \sim (u',v')\sigma \Longleftrightarrow uv \sim u'v'.
    \]
\end{enumerate}
\end{lemma}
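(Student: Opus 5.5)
The statement is the conjugation lemma, \cref{conjugation rules in Aut T}. I will prove it directly from the wreath product decomposition $\Omega \simeq (\Omega\times\Omega)\rtimes S_2$ and the multiplication rule \eqref{eq:relations}. The proof conjugates a general element $(u,v)\tau$ by a general element $g=(x,y)\rho$ and reads off the components.

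For (i), apply the continuous homomorphism $\Omega \to S_2$, which is the action on level one. Conjugate elements have conjugate images in $S_2$. Since $S_2$ is abelian, the images must be equal, so $\tau=\tau'$.

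For (ii), first take $\rho=\id$. Using \eqref{eq:relations}, $(x,y)(u,v)(x,y)^{-1}=(xux^{-1},\,yvy^{-1})$. This gives the direction in which $u\sim u'$ and $v\sim v'$. Next take $g=\sigma$. Then $\sigma(u,v)\sigma^{-1}=(v,u)$, which gives the swapped case. Composing these two handles $u\sim v'$ and $v\sim u'$.

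Conversely, any $g$ has the form $(x,y)$ or $(x,y)\sigma$. So any conjugate of $(u,v)$ is either $(xux^{-1},yvy^{-1})$ or $(xvx^{-1},yuy^{-1})$. This yields exactly the two alternatives.

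For (iii), the first step is to compute the conjugate of $(u,v)\sigma$ by $(x,y)$:
\[
(x,y)(u,v)\sigma(x^{-1},y^{-1}) = (x u y^{-1},\, y v x^{-1})\sigma .
\]
The product of the two components is $xuy^{-1}\,yvx^{-1}=x(uv)x^{-1}$, which is conjugate to $uv$.

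The second step is conjugation by $\sigma$, which sends $(u,v)\sigma$ to $(v,u)\sigma$. Here the product becomes $vu=u^{-1}(uv)u\sim uv$. Hence every conjugate $(u',v')\sigma$ satisfies $u'v'\sim uv$, which is the forward direction.

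For the converse, suppose $u'v'=h(uv)h^{-1}$. Choose $x=h$, and determine $y$ from the requirement $xuy^{-1}=u'$, namely $y=u'^{-1}hu$. Then the second component is
\[
yvx^{-1}=u'^{-1}h\,uv\,h^{-1}=u'^{-1}u'v'=v'.
\]
So $(x,y)$ conjugates $(u,v)\sigma$ to $(u',v')\sigma$.

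The main point needing care is the convention in \eqref{eq:relations}, that is, which component $\tau$ permutes. With the opposite convention the product $uv$ would become $vu$. This is harmless, since $vu$ and $uv$ are always conjugate.
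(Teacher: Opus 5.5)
Your proof is correct: you conjugate by elements $(x,y)$ and by $\sigma$ using the multiplication rule \eqref{eq:relations}, and for the converse of (iii) you solve for $y=u'^{-1}hu$. The paper does not prove this lemma itself; it cites \cite[Lemma~1.3.1]{Pinkpolyn} and \cite[Lemma~4.3]{Ejder26}, and your argument is the standard direct wreath-product computation behind that result.
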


\subsection{Odometers}
An automorphism $\alpha \in \Omega$ is called an \emph{odometer} if $\alpha$ acts as transitively on every level $n$ of the tree. Equivalently, $\alpha$ is an odometer if $\alpha \restr{T_n}$ has order $2^n$ for all $n\geq 1$. By definition, any conjugate of $\alpha$ in $\Omega$ is also an odometer. The existence of odometers in a subgroup of $\Omega$ can be tested by the following proposition. See \cite[Proposition~1.6.2]{Pinkpolyn} or \cite[Proposition 2.1]{Ejder26} for a proof.
\begin{proposition}\label{pink:odometer}
An element $\gamma \in \Omega$ is an odometer if and only if $\sgn_n(\gamma)=-1$ for all $n\geq 1$.
\end{proposition}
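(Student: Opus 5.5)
We argue by induction on the level $n$, proving for every $n\ge 1$ and every $\gamma\in\Omega$ the equivalence
\[
\gamma\restr{T_n} \text{ has order } 2^n \iff \sgn_m(\gamma)=-1 \text{ for all } 1\le m\le n .
\]
The proposition then follows by letting $n\to\infty$. Note that $\Omega_n$ is a $2$-group of exponent dividing $2^n$, so the orders of elements of $\Omega_n$ are powers of $2$. Moreover, an element of $\Omega_n$ acts transitively on the $2^n$ leaves exactly when one of its cycles has length $2^n$, which happens exactly when its order is $2^n$. This justifies the reformulation of odometers as elements of order $2^n$ on each $T_n$.

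For $n=1$ the equivalence is immediate: $\sgn_1(\gamma)=-1$ means exactly that $\gamma$ swaps the two vertices of level $1$.

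For the inductive step, write $\gamma=(u,v)\tau$. If $\tau=\id$, then $\gamma$ fixes level $1$, so it is not transitive on level $n$, and also $\sgn_1(\gamma)=1$; both sides of the equivalence fail. So assume $\tau=\sigma$. By the relation \eqref{eq:relations},
\[
\gamma^2=(uv,\,vu),
\]
and $\gamma$ is transitive on level $n$ if and only if $\gamma^2$ is transitive on each of the two subtrees of height $n-1$. Since $vu=v(uv)v^{-1}$, the restrictions of $uv$ and $vu$ to $T_{n-1}$ are conjugate. Hence $\gamma\restr{T_n}$ is transitive if and only if $uv\restr{T_{n-1}}$ is transitive, and by the induction hypothesis this holds if and only if $\sgn_m(uv)=-1$ for all $1\le m\le n-1$.

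It remains to compare signs. By the sign formula stated above, for $m\ge 2$ and $\gamma=(u,v)\sigma$ we have
\[
\sgn_m(\gamma)=\sgn_{m-1}(u)\,\sgn_{m-1}(v)=\sgn_{m-1}(uv),
\]
since $\sgn_{m-1}$ is a homomorphism. Combined with $\sgn_1(\gamma)=-1$, this shows that the condition "$\sgn_m(uv)=-1$ for $1\le m\le n-1$" is the same as "$\sgn_m(\gamma)=-1$ for $1\le m\le n$", which completes the induction.

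The main point requiring care is the reduction of transitivity of $\gamma$ to transitivity of $\gamma^2$ on each half of the tree. If $\gamma$ swaps the two halves, then $\gamma$ has a single orbit of size $2^n$ if and only if $\gamma^2$ restricted to one half has a single orbit of size $2^{n-1}$. The reason is that every $\gamma$-orbit meets both halves and alternates between them, so each $\gamma$-orbit meets a given half in exactly one $\gamma^2$-orbit. Everything else in the argument is bookkeeping with the wreath product formulas.
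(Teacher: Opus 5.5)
Your proof is correct and complete. The reformulation via element orders in $\Omega_n$, the reduction of transitivity of $(u,v)\sigma$ on level $n$ to transitivity of $uv$ on level $n-1$ via $\gamma^2=(uv,vu)$, and the identity $\sgn_m((u,v)\sigma)=\sgn_{m-1}(uv)$ for $m\ge 2$ are all properly justified. The paper does not prove this statement itself; it only cites \cite[Proposition~1.6.2]{Pinkpolyn} and \cite[Proposition~2.1]{Ejder26}, and your level-by-level induction is a self-contained version of the standard wreath-recursion argument for this fact.
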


\subsection{Iterated monodromy groups}\label{sec:mon}
Let $k$ be a number field. Fix an algebraic closure $\bar{k}$ of $k$ and denote the absolute Galois group as $G_k = \Gal(\bar{k}/k)$.
Let $f:\PP^1_k \to \PP^1_k$ be a morphism of degree $d \geq 2$ defined over $k$. Let $C$ be the set of critical points of $f$ and let $P$
be the forward orbit of the points in $C$, i.e.,
\[ P:=\{ f^n(c) : n\geq 1, c \in C\}. \]

We set $X= \PP^1_k \backslash P$ and let $x_0 \in X(k) $. Since each $f^n$ is a connected unramified covering of $X$, it is determined  by the monodromy action of $\pi_1^{\acute{e}t}(X, x_0)$ on $f^{-n}(x_0)$ up to isomorphism. Let $T_{x_0}$ be the tree defined as follows: it is rooted at $x_0$, and the vertices of $T_{x_0}$ at level $n$ are the points of $f^{-n}(x_0)$ for all $n \geq 1$, and two vertices $p,q$ are connected if $f(p)=q$. As $n$ varies, associated monodromy defines a representation 
\begin{equation}\label{eq:rho}
	\rho \colon \pi_1^{\acute{e}t}(X, x_0) \to \Aut(T_{x_0}).
\end{equation} 
We call the image of the map the \emph{arithmetic iterated monodromy group} $G^{\text{arith}}(f)$ of $f$. One can also study this representation over $\bar{k}$ and obtain 
\[ \pi_1^{\acute{e}t}(X_{\bar{k}}, x_0) \to \Aut(T_{x_0}),
\]
whose image in this case is called the \emph{geometric iterated monodromy group} $G^{\text{geom}}(f)$.
We note that these iterated monodromy groups are unique up to conjugation by the elements of $\Aut(T_{x_0})$. 
Moreover, we point out that we can view $\ggeo(f)$ as a subgroup of $\ga(f)$.
The arithmetic and the geometric iterated monodromy groups of $f$ fit into an exact sequence as follows:
\begin{equation}\label{eq:exact}
	\begin{tikzcd}
		1\arrow{r}  &  \pi_1^{\acute{e}t}(\PP_{\bar{k}}^1 \backslash P, x_0) \arrow{r} \arrow{d}& \pi_1^{\acute{e}t}(\PP_k^1 \backslash P, x_0) \arrow{r} \arrow{d} &G_k \arrow{r} \arrow{d} &1  \\
		1 \arrow{r} &   G^{\text{geom}}(f) \arrow{r} &G^{\text{arith}}(f) \arrow{r}  &\Gal(F /k) \arrow{r} &1 \\
	\end{tikzcd}
\end{equation}
for some field extension $F$ of $k$. This field $F$ is called the \emph{constant field} subextension and determining this field $F$ or its degree is a fundamental problem.

Another way to describe the geometric (arithmetic resp.) iterated monodromy group of $f$ is  as the projective limit of the Galois group of the splitting field of the $n$th iteration $f^{n}(x)-t$ over $\overline{k}(t)$ (over $k(t)$ resp.). By abuse of notation, we denote the geometric and the arithmetic iterated monodromy groups of $f$ as follows:
\[ G^{\text{geom}}(f)=\ilim_n \Gal(\overline{k}(f^{n}(x)-t)/\overline{k}(t)) \text{ and }  G^{\text{arith}}(f)=\ilim_n \Gal(k(f^{n}(x)-t)/k(t)). \]

The groups $G^{\text{geom}} (f)$ and $G^{\text{arith}}(f)$ are profinite groups and they are embedded into $\Aut(T_{x_0})$ by construction. We identify the tree $T_{x_0}$ we constructed above with the infinite binary tree $T$ we described in the previous section. From now on, we  assume $G^{\text{geom}}(f)$ and $G^{\text{arith}}(f)$ are subgroups of $\Omega=\Aut(T)$. Both of these groups are self-similar, i.e. for any $n\geq 2$, we have 
\[ G_n^{\text{arith}} \subset (G_{n-1}^{\text{arith}} \times G_{n-1}^{\text{arith}})\rtimes S_2 \text{ and } G_n^{\text{geom}} \subset (G_{n-1}^{\text{geom}} \times G_{n-1}^{\text{geom}}) \rtimes S_2.
\]
 Whenever it is clear from the context, we omit $f$ from the notation and write $G^{\text{geom}}$ and $G^{\text{arith}}$. \\

The rational function $f$ is called \emph{postcritically finite (PCF)} if its postcritical set $P$ is finite. For a PCF function $f$,  the geometric iterated monodromy group is topologically finitely generated, with generators indexed by the elements of $P$.

\subsection{Arboreal Galois Groups}
Let $k$ be a number field, and $\bar{k}$ be a separable closure of $k$. 
We write the absolute Galois group as $G_k= \Gal(\bar{k}/k).$
Suppose $f \colon \PP^1_k \to \PP^1_k$ is a morphism of degree $d \geq 2$ defined over $k$, and
fix $a \in k$. Consider the morphism $s_a \colon \Spec(k) \to \PP^1_k$
corresponding to $a$, and denote its image by $\bar{a}$. 
From now on, we assume $ \bar{a} \notin P$ such that we can build a regular rooted $d$-ary tree.
By functoriality of the \'etale fundamental group, we obtain a homomorphism
\[
G_k \to \pi_1^{\acute{e}t}(\PP^1_k \setminus P, a).
\]
Composing with $\rho \colon \pi_1^{\acute{e}t}(\PP^1_k \setminus P, a) \to \Aut(T_{a})$,  yields a representation
\[
\rho_{f,a} \colon G_k \to \Aut(T_{a})
\]
which agrees with the natural action of $G_k$ on the tree $T_{a}$ consisting of the preimages of $a$ under iterates of $f$.
The image of $\rho_{f,a}$ is called the \emph{arboreal Galois group}
associated to $(f,a)$, and we denote it by $G_{a}(f) = G_a$.
Since a change of base point corresponds to conjugation by an element of $\Omega$, the group
$G_{a}(f)$ is conjugate to a subgroup of the arithmetic iterated monodromy
group of $f$.

As the name indicates one can also understand the arboreal Galois group as a Galois group. 
We write the splitting field of $f^n(x)-a$ over $k$ as $K_{n,a}$. 
We set $K_{\infty,a}$ as the union of all $K_{n,a}$ and define  $G_{n,a} = \Gal(K_{n,a}/k).$ 
Then the arboreal Galois group $G_{a}(f)$ is isomorphic to the Galois group of the field extension $K_{\infty,a}/k$ and likewise to the inverse limit of the $G_{n,a}$. 

\section{Group theory for $f(x)= 2/(x-1)^2$} \label{sec: group theory for exa 7 of LMY paper}
From now on, we consider the rational function given by
\[
f(x) = \frac{2}{(x-1)^2}.
\]
The critical points of $f$ have the following dynamic
\begin{align*}
\begin{gathered}
\xymatrix{
1 \ar[r]^2 & 
\infty \ar[r]^2 &
0 \ar[r] & 
2, \ar@(ur,dr)[]
}
\end{gathered}
\end{align*}
where the label $2$ indicates ramification. 
Hence, the rational map $f$ is PCF.
For this map, the critical points are $1$ and $\infty$, and the branch points are $\infty$ and $0$. We set
\[
C = \{1,\infty\}, \qquad P = \bigcup_{n} f^n(C) = \{0,2,\infty\}.
\]

The group $G^{\text{geom}}$ is topologically generated by elements $b_p$ for all points $p$ in the postcritical set $P$.
Since $P$ is finite, the group $G^{\text{geom}}= \ggeo(f)$ is topologically finitely generated, and the product of the elements $b_p$ (taken in some order) is equal to the identity. These generators $b_p$ are conjugate, in $\Omega$, to certain elements described in \cite[Proposition 1.7.15]{Pinkpolyn}.

Using Pink's work (see \cite[Proposition 1.7.15]{Pinkpolyn}), we may write
\[
\ggeo = \llang b_\infty, b_0, b_2 \mid b_\infty b_0 b_2 = \id \rrang,
\]
with
\begin{align*}
b_\infty &\sim \sigma, \\
b_0 &\sim (b_\infty, \id)\sigma, \\
b_2 &\sim (b_0, b_2).
\end{align*}

The triple $(b_\infty, b_0, b_2)$ lies in the (non-empty) set
\[
M \coloneqq \left\{ (b_1, b_2, b_3) \in \Omega^3 \;\middle|\;
b_1 \sim \sigma,\;
b_2 \sim (b_1, \id)\sigma,\;
b_3 \sim (b_2, b_3),\;
b_1 b_2 b_3 = \id
\right\}.
\]

\begin{lemma}
If we fix $(a_1, a_2, a_3) \in M$, then
\[
M = \left\{ (b_1, b_2, b_3) \in \Omega^3 \;\middle|\;
b_i \sim a_i \text{ for } i=1,2,3,\ \text{and } b_1 b_2 b_3 = \id
\right\}.
\]
Moreover, this set is closed under conjugation by elements of $\Omega$.
\end{lemma}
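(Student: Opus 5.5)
We prove the two inclusions separately and then handle the closure under conjugation. Write $M'$ for the right-hand side, i.e.\ the set of triples with $b_i \sim a_i$ for $i=1,2,3$ and $b_1b_2b_3=\id$.

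For $M \subseteq M'$, take $(b_1,b_2,b_3)\in M$ and show $b_i\sim a_i$ one index at a time. We have $b_1\sim\sigma\sim a_1$ directly. Next, $b_2\sim(b_1,\id)\sigma$ and $a_2\sim(a_1,\id)\sigma$. By \cref{conjugation rules in Aut T}(iii), $(b_1,\id)\sigma\sim(a_1,\id)\sigma$ if and only if $b_1\sim a_1$, which we already have. Hence $b_2\sim a_2$.

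The third generator is the main obstacle, because the defining relation $b_3\sim(b_2,b_3)$ is recursive in $b_3$ itself. We will show by induction on $n$ that $b_3|_{T_n}$ and $a_3|_{T_n}$ are conjugate in $\Omega_n$. Passing from levels to $\Omega$ then uses the standard compactness argument: the sets $C_n=\{g\in\Omega : g|_{T_n}\text{ conjugates } b_3|_{T_n} \text{ to } a_3|_{T_n}\}$ are nonempty, closed and decreasing, so their intersection is nonempty.

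\begin{itemize}
\item For $n=1$, both restrictions are trivial, since $b_3\sim(b_2,b_3)$ acts trivially on the first level.
\item For the inductive step, suppose $b_3|_{T_{n-1}}\sim a_3|_{T_{n-1}}$. We also know $b_2|_{T_{n-1}}\sim a_2|_{T_{n-1}}$. Apply \cref{conjugation rules in Aut T}(ii) at the finite level $n$; it holds verbatim in $\Omega_n=\Omega_{n-1}\wr S_2$. This gives $(b_2,b_3)|_{T_n}\sim(a_2,a_3)|_{T_n}$, and therefore $b_3|_{T_n}\sim a_3|_{T_n}$.
\end{itemize}

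For $M'\subseteq M$, take $(b_1,b_2,b_3)\in M'$ and use transitivity of conjugacy. We get $b_1\sim a_1\sim\sigma$. For $b_2$, we have $b_2\sim a_2\sim(a_1,\id)\sigma\sim(b_1,\id)\sigma$, where the last step again uses (iii) together with $a_1\sim b_1$. For $b_3$, we have $b_3\sim a_3\sim(a_2,a_3)$, and $(a_2,a_3)\sim(b_2,b_3)$ by (ii), since $a_2\sim b_2$ and $a_3\sim b_3$. The product relation $b_1b_2b_3=\id$ holds by assumption, so $(b_1,b_2,b_3)\in M$.

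Finally, $M'$ is closed under simultaneous conjugation by any $g\in\Omega$. Each $gb_ig^{-1}$ is still conjugate to $a_i$, and
\[
(gb_1g^{-1})(gb_2g^{-1})(gb_3g^{-1})=g\,b_1b_2b_3\,g^{-1}=\id .
\]
Since $M=M'$, the set $M$ is closed under conjugation as well.
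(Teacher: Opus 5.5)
Your proof is correct and follows the same route as the paper: the same chains of conjugacies through parts (ii) and (iii) of \cref{conjugation rules in Aut T} give both inclusions, and closure under conjugation comes from the same observation. The one real difference is in showing $b_3\sim a_3$ for $(b_1,b_2,b_3)\in M$. The paper writes this as the one-line step $(b_2,b_2^{-1}b_1^{-1})\sim(a_2,a_2^{-1}a_1^{-1})$, whose second coordinate tacitly needs $b_3\sim a_3$, the very thing being proved. Your induction on the level $n$ (the second coordinate only matters one level down), followed by the compactness argument, is what makes that self-referential step rigorous.
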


\begin{proof}
Let $(b_1, b_2, b_3) \in M$. Then $b_1 \sim \sigma \sim a_1$.
Using \cref{conjugation rules in Aut T}~(iii) and $b_1 \sim a_1$, we obtain
\[
b_2 \sim (b_1, \id)\sigma \sim (a_1, \id)\sigma \sim a_2.
\]
Moreover,
\[
b_3 \sim (b_2, b_3) = (b_2, b_2^{-1} b_1^{-1})
\sim (a_2, a_2^{-1} a_1^{-1})
\sim a_3.
\]

Conversely, let $b_1, b_2, b_3 \in \Omega$ satisfying $b_1 b_2 b_3 = \id$ and $b_i \sim a_i$ for $i=1,2,3$. Then
\[
b_1 \sim a_1 \sim \sigma,
\]
and
\[
b_2 \sim a_2 \sim (a_1, \id)\sigma \sim (b_1, \id)\sigma,
\]
as well as
\[
b_3 \sim a_3 \sim (a_2, a_3)
= (a_2, a_2^{-1} a_1^{-1})
\sim (b_2, b_2^{-1} b_1^{-1})
= (b_2, b_3).
\]
Hence $(b_1, b_2, b_3) \in M$.

The description of $M$ immediately shows that it is closed under conjugation by elements of $\Omega$.
\end{proof}

To work with explicit generators of $\ggeo$, we choose a triple in $M$ and show that the group generated by this triple is conjugate to $G^{\text{geom}}$.

We define
\begin{align*}
a_1 &\coloneqq \sigma, \\
a_2 &\coloneqq (a_3^{-1}, a_2^{-1}) \sigma, \\
a_3 &\coloneqq (a_2, a_3).
\end{align*}
The idea is to fix $a_1$ and $a_3$, and then define $a_2 = a_1^{-1} a_3^{-1}$ so that $a_1 a_2 a_3 = \id$.

Applying \cref{conjugation rules in Aut T}, we obtain $a_2 \sim (a_1, \id)\sigma$, since
\[
a_3^{-1} a_2^{-1} = a_1^{-1} = a_1 \sim a_1.
\]
We note that $a_1$ has order $2$, while $a_2$ and $a_3$ have order $4$ in $\Omega$.

\medskip

Our first goal is the following theorem, which ensures that we may work with the group generated by the $a_i$ to study $\ggeo$.

\begin{theorem}\label{ggeom is conjugate to group generated by a_i}
Suppose $b_1, b_2, b_3 \in \Omega$ satisfy $a_i \sim b_i$ for all $i=1,2,3$, and $b_1 b_2 b_3 = \id$. Then there exist $\beta \in \Omega$ and $g_1, g_2, g_3 \in \llang a_1, a_2, a_3 \rrang$ such that
\[
b_i = \beta\, g_i a_i g_i^{-1} \beta^{-1}
\quad \text{for all } i=1,2,3.
\]
In particular, the topological closures of the groups generated by $a_1,a_2,a_3$ and $b_1,b_2,b_3$ are conjugate.
\end{theorem}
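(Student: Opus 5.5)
We prove the statement by the level-by-level argument standard for such conjugacy results (cf.\ \cite[Theorem 1.4.1]{Pinkpolyn}). Write $G:=\llang a_1,a_2,a_3\rrang$. Let $X_n$ be the set of pairs $(\beta,(g_1,g_2,g_3))\in\Omega\times G^3$ satisfying $b_i=\beta g_ia_ig_i^{-1}\beta^{-1}$ on $T_n$ for $i=1,2,3$. Each $X_n$ is closed, and the $X_n$ form a decreasing chain in the compact space $\Omega\times G^3$. So it suffices to show that each $X_n$ is non-empty, equivalently to prove the statement modulo level $n$ for every $n$. We do this by induction on $n$, proving simultaneously the statement for all triples $(b_1,b_2,b_3)$ as in the hypothesis.

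First normalize on the first level and at the top. Since $b_1\sim\sigma$, after conjugating by some $\beta_0\in\Omega$ we may assume $b_1=\sigma$. Next, $b_2\sim a_2$ is nontrivial on level one, so $b_2=(u,v)\sigma$. Conjugating by $(x,y)$ with $y=x$ preserves $\sigma$ and sends $(u,v)\sigma$ to $(xux^{-1},xvx^{-1})\sigma$. We also have the freedom to conjugate by powers of $b_1=\sigma$ itself, which lie in $\beta G\beta^{-1}$ after the normalization. The relation $b_1b_2b_3=\id$ gives
\[
b_3=b_2^{-1}\sigma=(v^{-1},u^{-1}).
\]
On the other hand, $b_3\sim a_3=(a_2,a_3)$. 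By \cref{conjugation rules in Aut T}(ii), either $v^{-1}\sim a_2$ and $u^{-1}\sim a_3$, or $v^{-1}\sim a_3$ and $u^{-1}\sim a_2$. The second case can be swapped into the first by conjugating with $\sigma=b_1$, which replaces $b_2$ by $\sigma b_2\sigma$ and $g_2$ by a translate by $a_1\in G$.

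Now reduce to the induction hypothesis for the pair of subtrees. Set $b_2':=v^{-1}$, $b_3':=u^{-1}$ and $b_1':=(b_2'b_3')^{-1}=uv$. From $b_2\sim(a_1,\id)\sigma$ and \cref{conjugation rules in Aut T}(iii) we get $uv\sim a_1$, so $b_1'\sim a_1$. Hence the triple $(b_1',b_2',b_3')$ satisfies the hypotheses. It is exactly the triple appearing in the wreath recursion of $(a_1,a_2,a_3)$, namely $(a_3^{-1}a_2^{-1}, a_2, a_3)$ read in the first coordinate. By induction, modulo level $n-1$ there are $\beta'\in\Omega$ and $g_i'\in G$ with $b_i'=\beta' g_i'a_ig_i'^{-1}\beta'^{-1}$. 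We then take $\beta=(\beta',\beta')$, possibly adjusted by a diagonal-type correction, so that conjugating $b_2,b_3$ by $\beta$ reduces the problem to elements whose sections are conjugates of $a_2,a_3$ by elements of $G$.

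The main obstacle is lifting the conjugators $g_i'\in G$ at level $n-1$ to conjugators in $G$ at level $n$. This requires knowing that $G$ contains enough elements of the form $(h,*)$ or $(h,h')$ with prescribed first section. Concretely, we need the sections $(a_2,a_3)$, $(a_3^{-1},a_2^{-1})\sigma$, and their products and conjugates to generate, in the first coordinate, a subgroup hitting every $g'$ needed. Our first attempt is to show that the stabilizer of the first level in $G$ projects onto $G$ in each coordinate, by computing $a_3=(a_2,a_3)$, $a_2^2=(a_3^{-1}a_2^{-1},a_2^{-1}a_3^{-1})$ and $a_1a_3a_1=(a_3,a_2)$. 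Using these, we choose $g_2=(x,*)$ and $g_3$ with the required sections. The leftover discrepancy in the second coordinate should be absorbed into $\beta$: it is conjugation by an element of $\Omega$ that commutes appropriately, because the relation $b_1b_2b_3=\id$ forces the remaining section to match.

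The final assertion then follows directly. We have $\beta^{-1}\langle b_1,b_2,b_3\rangle\beta\subseteq G$, and the reverse inclusion is obtained by symmetry, applying the result with the roles of the $a_i$ and the $b_i$ exchanged since both triples lie in $M$. Taking closures gives the conjugacy of the two topologically generated groups.
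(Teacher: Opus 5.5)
There is a genuine gap. Your reduction is the same as the paper's: you normalize $b_1=\sigma$, pass to the section triple $(uv,v^{-1},u^{-1})$, check $uv\sim a_1$ by \cref{conjugation rules in Aut T}(iii), apply induction, and conjugate by $(\beta',\beta')$. After that you are left with $b_1=\sigma$ and $b_3=(h_2a_2h_2^{-1},h_3a_3h_3^{-1})$, where $h_2,h_3\in G$ come independently from the induction hypothesis. The whole content of the theorem is to merge these two conjugators into a single element of $G$, and at this point your argument is only a hope.

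The tool you propose cannot do the merging. Surjectivity of the level-one stabilizer onto $G$ in each coordinate does not let you prescribe both coordinates at once. By \cref{cor:uniqueG}, the second section of $(x,y)\in G$ is determined by the first, so taking $x=h_2$ forces $y$, and in general $ya_3y^{-1}\neq h_3a_3h_3^{-1}$. Nor can the leftover discrepancy be absorbed into $\beta$ at will. A correction $\delta$ is admissible only if $\delta^{-1}\sigma\delta$ is a $G$-conjugate $(pq^{-1},qp^{-1})\sigma$ of $a_1$. This forces the discrepancy between the two coordinates of $\delta$ to lie in $\{pq^{-1}:(p,q)\in G\}=U$, a subgroup of index $4$ in $G$. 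The relation $b_1b_2b_3=\id$ only determines $b_2$ from $b_1$ and $b_3$; it does not put the discrepancy into $U$.

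The missing ingredients are the ones the paper develops for exactly this step.
\begin{enumerate}
\item \cref{characterization of U} says the set $\{xy^{-1}:(x,y)\in G\}$ is all of $U$, not merely contained in it. Combined with $G=U\cdot\langle a_3\rangle$ from \cref{index of U}, this lets you write $h_2^{-1}h_3=(xy^{-1})a_3^k$ with $(x,y)\in G$. Since $a_3^k$ centralizes $a_3$, setting $w=h_2x$ gives $b_3=(w,w)(x,y)^{-1}a_3(x,y)(w,w)^{-1}$, and $(w,w)$ commutes with $\sigma$.
\item You never handle $b_2$. Once $\beta$, $g_1$, $g_3$ are fixed, $b_2=b_1^{-1}b_3^{-1}$ is determined. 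You must still show that $a_1ga_3^{-1}g^{-1}$ is $G$-conjugate to $a_2$ for every $g\in G$. This is \cref{lem:a2conj}, which rests on the centralizer computation showing that the $G$-conjugacy class of $a_2$ fills the whole coset $a_2\llbracket G,G\rrbracket$.
\end{enumerate}

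Your ``symmetry'' argument for the final assertion is circular. The theorem is proved only for the specific reference triple $(a_i)$, using structural facts about $G$, not for an arbitrary triple in $M$. The correct step is that each $g_ia_ig_i^{-1}$ has the same image as $a_i$ in the Frattini quotient of the pro-$2$ group $G$. Hence these conjugates still topologically generate $G$ \cite[Lemma 1.3.2]{Pinkpolyn}.
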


\subsection{Constructing a Model for the geometric iterated monodromy group}
We define the group
\[
G \coloneqq \llang a_1, a_2, a_3 \rrang = \llang a_1, a_3 \rrang,
\]
the closed subgroup generated by $a_1, a_2$, and $a_3$. We first observe that $\sgn_4(a_1)=1$ and $\sgn_4(a_3)=\sgn_3(a_3)^2$, hence equals $1$ as well. Since $G$ is generated by $a_1$ and $a_3$, $\sgn_4(\alpha)=1$ for all $\alpha \in G$, hence $G_4$ contains no elements of order $4$. By ~\cref{pink:odometer}, $G$ contains no odometers.

For each $i = 1,2,3$, we set
\[
H_i \coloneqq \overline{\llang a_i \rrang},
\]
which is the normal closure of the closed subgroup generated by $a_i$. We denote the projection of $H_i$ onto level $n$ by $H_{i,n}$ for all $i=1,2,3$. 
From the definition, the orders of the $a_i$ and a computation on level $4$, we compute the indices
\[
[G:H_1]= 4,
\quad [G:H_2] = 2,
\quad [G:H_3] = 2.
\]
Since $G$ is generated by $a_1,a_2,a_3$, we have $G=H_1H_2=H_1H_3=H_2H_3$.

We note that $G_n/H_{i,n} \to G_{n-1}/H_{i,n-1}$ is surjective for all $i \in \{1,2,3\}$. Also by the recursive definitions of the generators, if $(u,v)\tau \in G$, then $u,v$ are both in $G$ (self-similarity). Conversely, for any $g\in G$, there is some $g' \in G$ such that $(g, g')\tau \in G$. Since $\sigma \in G$, one may take $\tau=\id$.

The description $G \cap (\Omega \times \Omega) = H_3$ follows from the facts
\begin{enumerate}[label=(\alph*)]
    \item $H_3 \subseteq G \cap (\Omega \times \Omega)$,
    \item $[G:H_3] = 2$,
    \item $[G: G \cap (\Omega \times \Omega)] = \# \langle \sigma \rangle = 2$. 
\end{enumerate}
We denote by $\llbracket G, G \rrbracket$ the topological closure of the commutator subgroup of $G$.

\begin{lemma}\label{commutator of G}
The closure of the commutator subgroup of $G$ satisfies
\[
\llbracket G, G \rrbracket = H_1 \cap H_3.
\]
Moreover, for any $n \geq 3$, the commutator subgroup $\llbracket G_n, G_n \rrbracket$ is equal to $H_{1,n} \cap H_{3,n}$ and has index $8$ in $G_n$. In particular, the maximal abelian quotient of $G$ is $G/H_1 \times G/H_3$ which is isomorphic to $\ZZ/2\ZZ \times \ZZ/4\ZZ$. 
\end{lemma}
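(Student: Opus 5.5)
We prove the two inclusions separately and obtain the index statement from the level-$n$ version.

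\emph{The easy inclusion.} Since $H_1$ and $H_3$ are normal in $G$, $\llbracket G,G\rrbracket \subseteq H_1\cap H_3$ will follow once we know that $G/H_1$ and $G/H_3$ are abelian. Both quotients are cyclic:
\begin{itemize}
\item $G/H_1$ is generated by the image of $a_3$, because $G=\llang a_1,a_3\rrang$ and $a_1\in H_1$;
\item $G/H_3$ is generated by the image of $a_1$.
\end{itemize}
From the stated indices $[G:H_1]=4$ and $[G:H_3]=2$ we get $G/H_1\cong \ZZ/4\ZZ$ and $G/H_3\cong \ZZ/2\ZZ$. The natural map $G/(H_1\cap H_3)\to G/H_1\times G/H_3$ is injective. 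It is also surjective, since $G=H_1H_3$ (the map hits $G/H_1\times\{1\}$ via $H_3$ and $\{1\}\times G/H_3$ via $H_1$). Hence
\[
G/(H_1\cap H_3)\cong \ZZ/2\ZZ\times\ZZ/4\ZZ,
\]
which is abelian, so $\llbracket G,G\rrbracket\subseteq H_1\cap H_3$. The same argument works at every level $n$ for which $[G_n:H_{1,n}]=4$ and $[G_n:H_{3,n}]=2$. These indices stabilize by level $4$ and, by the computation quoted in the paper, already hold at level $3$.

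\emph{The reverse inclusion.} We show $[G:\llbracket G,G\rrbracket]\le 8$. Since $G$ is topologically generated by $a_1$ and $a_3$, the abelianization $G^{ab}=G/\llbracket G,G\rrbracket$ is topologically generated by the images $\bar a_1,\bar a_3$. Because $a_1^2=\id$, we have $\bar a_1^2=1$. So it suffices to show $\bar a_3^4=1$, i.e.\ that $a_3$ has order dividing $4$. The paper states that $a_3$ has order $4$ in $\Omega$. For completeness, this can be checked recursively:
\[
a_3^2=(a_2^2,a_3^2),\qquad a_2^2=(a_3^{-1}a_2^{-1},\,a_2^{-1}a_3^{-1}),
\]
and $a_3^{-1}a_2^{-1}=a_1^{-1}=\sigma$, so $a_2^2=(\sigma,\sigma')$ for a conjugate $\sigma'$ of $\sigma$. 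Hence $a_2^4=\id$, and then $a_3^4=(a_2^4,a_3^4)$ forces $a_3^4=\id$ by induction on levels.

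It follows that $G^{ab}$ is a quotient of $\ZZ/2\ZZ\times\ZZ/4\ZZ$, so it has order at most $8$. Combined with $[G:H_1\cap H_3]=8$, this gives equality. The same counting applies to $G_n$, because the images of $a_1,a_3$ generate $G_n$. This yields $\llbracket G_n,G_n\rrbracket=H_{1,n}\cap H_{3,n}$ of index $8$, provided $[G_n:H_{1,n}\cap H_{3,n}]=8$ for $n\ge3$.

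\emph{Main obstacle.} The delicate step is this finite-level index claim: we need $a_3$ to have order exactly $4$ in $G_3$ modulo $H_{1,3}$, i.e.\ that the indices $4$ and $2$ are already attained at level $3$. I would verify it by a direct computation of $G_3\subseteq\Omega_3$, which has order at most $2^7$, using the recursive formulas for $a_1,a_3$. I would then note that the surjections $G_n/H_{i,n}\to G_{n-1}/H_{i,n-1}$ propagate the index to all $n\ge3$, since an index that is at least $4$ at level $3$ and at most $4$ overall must equal $4$ at every level $n\ge 3$. Finally, passing to the inverse limit gives the statement for $G$ itself.
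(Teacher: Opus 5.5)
Your proposal is correct and follows essentially the same route as the paper: the surjection $G\to G/H_1\times G/H_3$ coming from $G=H_1H_3$ gives an abelian quotient of order $8$ with kernel $H_1\cap H_3$, and the bound $|G/\llbracket G,G\rrbracket|\le 8$ coming from the generators $a_1,a_3$ of orders $2$ and $4$ forces equality, with the same counting at each finite level. Your recursive check that $a_3^4=\id$ and your flagging of the level-$3$ index $[G_3:H_{1,3}]=4$ make the finite-level part more careful than the paper's ``the rest of the argument is analogous''; the paper leaves this index implicit, but it does hold, since a direct computation gives $|H_{1,3}\cap H_{3,3}|=4$ inside $G_3$ of order $32$.
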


\begin{proof}
Consider the group homomorphism
\[
G \to G/H_1 \times G/H_3.
\]
Since $G = H_1 H_3$, the Chinese remainder theorem implies that this homomorphism is surjective with kernel $H_1 \cap H_3$. Hence $H_1 \cap H_3$ has index $8$ in $G$.

Since $G$ is generated by $a_1$ and $a_3$, and $a_1$ has order $2$ while $a_3$ has order $4$, the order of the quotient $G/\llbracket G, G \rrbracket$ is at most $8$. As $G/H_1 \times G/H_3$ is abelian, it follows that $\llbracket G, G \rrbracket \subseteq H_1 \cap H_3$. Therefore, we obtain
\[
\llbracket G, G \rrbracket = H_1 \cap H_3.
\]

For the finite groups $G_n$, the element $a_1\restr{T_n}$ has order $2$ and $a_3\restr{T_n}$ has order $4$ for all $n \geq 3$. The rest of the argument is analogous.
\end{proof}

We next compute an explicit set of generators for the commutator $\llbracket G, G \rrbracket$.

\begin{lemma}\label{betas}
The group $H_1 \cap H_3$ is generated by the elements $a_1 a_3 a_1 a_3^{-1}$ and $a_3^{-1} a_1 a_3 a_1$.
\end{lemma}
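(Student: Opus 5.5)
The plan is to prove two inclusions. Let $N$ be the closed subgroup of $G$ topologically generated by
\[
\beta_1 \coloneqq a_1 a_3 a_1 a_3^{-1}, \qquad \beta_2 \coloneqq a_3^{-1} a_1 a_3 a_1 .
\]
Since $a_1 = a_1^{-1}$, these can be rewritten as
\[
\beta_1 = a_1 a_3 a_1^{-1} a_3^{-1}, \qquad \beta_2 = a_3^{-1} a_1 a_3 a_1^{-1}.
\]
So both are commutators, and $N \subseteq \llbracket G, G\rrbracket$. By \cref{commutator of G} we have $\llbracket G, G\rrbracket = H_1\cap H_3$, hence $N\subseteq H_1\cap H_3$.

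For the reverse inclusion I would show that $N$ is normal in $G$ and that $G/N$ is abelian. Then $\llbracket G,G\rrbracket\subseteq N$, and \cref{commutator of G} again gives $N=H_1\cap H_3$. Commutativity is easy once normality is known. Indeed, $\beta_1\in N$ says exactly that $a_1a_3\equiv a_3a_1 \pmod N$. Since $G=\llang a_1,a_3\rrang$ and $N$ is closed, the images of $a_1$ and $a_3$ topologically generate $G/N$. Two commuting generators therefore make $G/N$ abelian.

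The heart of the proof is normality. Since $G$ is topologically generated by $a_1$ and $a_3$, and $N$ is closed, it suffices to check
\[
a_1\beta_i a_1^{-1}\in N \quad\text{and}\quad a_3^{\pm1}\beta_i a_3^{\mp1}\in N \qquad (i=1,2).
\]
This is a Reidemeister--Schreier type bookkeeping. In a group generated by an involution $x$ and an element $y$ of order $4$, the commutator subgroup is generated by the conjugates $y^j[x,y]y^{-j}$ and $y^jx[x,y]xy^{-j}$ for $0\le j\le 3$. Several of these collapse immediately. For example,
\[
a_1\beta_1a_1 = a_3a_1a_3^{-1}a_1 = \beta_1^{-1}\ \text{(up to rewriting)},
\]
and $a_3\beta_2a_3^{-1}$ reduces to a product of $\beta_1^{\pm1}$ and $\beta_2^{\pm1}$. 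I would go through these conjugates one at a time. In each case I would use only $a_1^2=\id$, $a_3^4=\id$ and the word identities, and rewrite the result as a word in $\beta_1^{\pm1}$ and $\beta_2^{\pm1}$.

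I expect the main obstacle to be the conjugates by $a_3^{2}$, for example $a_3^2\beta_1a_3^{-2}$. Free group relations together with the orders of $a_1$ and $a_3$ may not suffice for these. If they do not, I would try two things in turn. First, I would extract an extra relation in $G$ from the recursive definitions, computing $a_3^2=(a_2^2,a_3^2)$ and comparing sections with \eqref{eq:relations}. Second, I would argue level by level: check $N_n \trianglelefteq G_n$ and $[G_n:N_n]=8$ for each $n$ by a computation on $G_n$, reducing to finitely many levels via self-similarity. Then I would pass to the inverse limit, using that $N$ is closed, so that $N=\ilim N_n$.
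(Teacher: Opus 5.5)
\paragraph{Verdict.} Your architecture matches the paper's, but the decisive step is left open. As in the paper, you argue $N=\llang\beta_1,\beta_2\rrang\subseteq\llbracket G,G\rrbracket$ because the $\beta_i$ are commutators, and conversely $N$ is normal with $G/N$ abelian, so $\llbracket G,G\rrbracket\subseteq N$, with normality checked by conjugating by $a_1$ and $a_3$. The formal conjugates are fine: $a_1\beta_ia_1^{-1}=\beta_i^{-1}$ and $a_3\beta_2a_3^{-1}=\beta_1$.

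\paragraph{Where it fails.} The one non-formal step is showing $a_3\beta_1a_3^{-1}\in N$. Since $\beta_2=a_3^{-1}\beta_1a_3$, this is the same as $a_3^2\beta_2a_3^{-2}\in N$, so your instinct that the trouble sits at $a_3^2$ is right. But your primary method provably cannot do it. In $\ZZ/2\ZZ*\ZZ/4\ZZ$ the commutator subgroup is free of rank $(2-1)(4-1)=3$, so it is not generated by two elements. Hence $\langle\beta_1,\beta_2\rangle$ is not normal there, and no rewriting that uses only $a_1^2=a_3^4=\id$ can succeed. Your second fallback (check finitely many levels and ``reduce via self-similarity'') is not an argument as stated, since nothing you give transports normality from finitely many levels to all levels.

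\paragraph{The missing relation.} What is needed is one extra relation that is already available: $a_2^{-1}=a_3a_1$ has order $4$. Using $a_1^2=a_3^4=\id$,
\[
a_3\beta_1a_3^{-1}\,\beta_2=a_3a_1a_3a_1a_3^{-3}a_1a_3a_1=(a_3a_1)^4=(a_2^{-1})^4=\id .
\]
So $a_3\beta_1a_3^{-1}=\beta_2^{-1}$. It follows that $a_3^2\beta_1a_3^{-2}=a_3\beta_2^{-1}a_3^{-1}=\beta_1^{-1}$, and $N$ is stable under conjugation by $a_1$ and $a_3$ (hence by $a_3^{-1}=a_3^3$). The paper obtains the same identity through the wreath recursion:
\[
a_3\beta_1a_3^{-1}=(a_2a_3a_2^{-2},\,a_3a_2a_3^{-2})=(a_1a_2^2,\,a_3a_1a_3)=\beta_2^{-1}.
\]
This is essentially your first fallback. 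Carrying it out, or simply invoking $a_2^4=\id$, closes the gap.
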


\begin{proof}
Let $\beta_1 := a_1 a_3 a_1 a_3^{-1}$ and $\beta_2 := a_3^{-1} a_1 a_3 a_1$. We have
\[
\beta_1 = (a_3 a_2^{-1}, a_2 a_3^{-1}), 
\quad 
\beta_2 = (a_2^{-1} a_3, a_3^{-1} a_2).
\]
Since $G$ is generated by $a_1$ and $a_3$, the following computations show that the subgroup $\llang \beta_1, \beta_2 \rrang$ is normal in $G$.

\begin{align*}
a_1 \beta_1 a_1^{-1} &= a_3 a_1 a_3^{-1} a_1^{-1} = \beta_1^{-1}, \\
a_1 \beta_2 a_1^{-1} &= a_3 a_1 a_3^{-1} a_1^{-1} = \beta_2^{-1}, \\
a_3 \beta_1 a_3^{-1}
&= (a_2, a_3)(a_3 a_2^{-1}, a_2 a_3^{-1})(a_2^{-1}, a_3^{-1}) \\
&= (a_2 a_3 a_2^{-2}, a_3 a_2 a_3^{-2}) \\
&= (a_1 a_2^2, a_3 a_1 a_3) \\
&= \beta_2^{-1}, \\[4pt] 
a_3 \beta_2 a_3^{-1}
&= (a_2, a_3)(a_2^{-1} a_3, a_3^{-1} a_2)(a_2^{-1}, a_3^{-1}) \\
&= (a_3 a_2^{-1}, a_2 a_3^{-1}) \\
&= \beta_1.
\end{align*}

Moreover, the quotient $G / \llang \beta_1, \beta_2 \rrang$ is abelian, which implies that $\llbracket G, G \rrbracket \subseteq \llang \beta_1, \beta_2 \rrang$. \\
Since $\beta_1, \beta_2 \in \llbracket G, G \rrbracket$, the claim follows.
\end{proof}

We define $U$ to be the normal closure of the topological subgroup generated by $a_2 a_3^{-1}$, i.e.
\[
U \coloneqq \overline{\llang a_2 a_3^{-1} \rrang}.
\]
By definition $U$ is a normal subgroup of $G$ and we have $G=U\cdot \langle a_3\rangle$.

\begin{lemma}\label{index of U}
The index of $U$ in $G$ is equal to $4$. Moreover, for any $n \geq 3$, we have $[G_n : U_n] = 4$.
\end{lemma}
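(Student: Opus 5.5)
Since $G=U\cdot\langle a_3\rangle$ and $U$ is normal, $G/U$ is cyclic, generated by the image of $a_3$. As $a_3$ has order $4$, we get $[G:U]\le 4$. The same holds at each finite level, because $G_n=U_n\langle a_3\restr{T_n}\rangle$ with $U_n$ normal in $G_n$. So $[G_n:U_n]\le 4$ for all $n$, and $[G:U]\le 4$. It remains to show that the index is at least $4$, i.e.\ to construct a surjection onto $\ZZ/4\ZZ$ that kills $U$.

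For the lower bound I would use the abelianization from \cref{commutator of G}. For $n\ge 3$, $G_n/\llbracket G_n,G_n\rrbracket\cong G/H_1\times G/H_3\cong \ZZ/2\ZZ\times\ZZ/4\ZZ$. In this quotient $a_1$ maps to a generator $x$ of order $2$ and $a_3$ to a generator $y$ of order $4$; more precisely, $a_1\mapsto (1,0)$ and $a_3\mapsto (0,1)$, since $a_1\in H_1$ and $a_3\in H_3$. From $a_2=a_1^{-1}a_3^{-1}$ we get $a_2 a_3^{-1}=a_1a_3^{-2}$, whose image is $(1,-2)=(1,2)$. The composite $G\to \ZZ/2\ZZ\times\ZZ/4\ZZ\to(\ZZ/2\ZZ\times\ZZ/4\ZZ)/\langle(1,2)\rangle$ therefore kills $a_2a_3^{-1}$. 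Its kernel is normal and closed, so it contains the closed normal closure $U$. The target group $(\ZZ/2\times\ZZ/4)/\langle(1,2)\rangle$ has order $4$ and is cyclic, generated by the image of $(0,1)$: that element has order $4$ because $2(0,1)=(0,2)\notin\langle(1,2)\rangle$. The map is surjective, so $[G:U]\ge 4$. Running the same argument with $G_n$ and the level-$n$ abelianization (valid for $n\ge3$ by \cref{commutator of G}) gives $[G_n:U_n]\ge 4$, where $U_n$ is the normal closure of $a_2a_3^{-1}\restr{T_n}$ in $G_n$; this agrees with $\pi_n(U)$, since the projection of a closed normal closure is the normal closure of the projection.

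The main point to be careful about is the identification of the images of $a_1,a_3$ in $G/H_1\times G/H_3$. The facts needed are $a_1\in H_1$, $a_3\in H_3$, that $G/H_1$ is cyclic of order $4$ generated by $a_3$, and that $G/H_3$ has order $2$ generated by $a_1$. These all follow from $G=\llang a_1,a_3\rrang$ and the stated indices $[G:H_1]=4$, $[G:H_3]=2$. (With this ordering the factors are $G/H_1\cong\ZZ/4$ and $G/H_3\cong\ZZ/2$; the computation is symmetric either way.) Once these are in place the argument is formal.
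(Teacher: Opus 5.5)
Your proof is correct, but you get the lower bound by a different route from the paper. The paper obtains $[G:U]\ge 4$ from a direct MAGMA computation $|G_3/U_3|=4$. It leaves the level-$n$ claim implicit: it follows because $G_n/U_n$ surjects onto $G_3/U_3$ and the upper bound $4$ holds at every level.

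You instead derive the lower bound formally from \cref{commutator of G}. The abelianization has order $8$ and is generated by the images of $a_1$ and $a_3$, of orders dividing $2$ and $4$. So it is $\ZZ/2\ZZ\times\ZZ/4\ZZ$ with these images as a basis. Hence there is a homomorphism $G\to\ZZ/4\ZZ$ with $a_1\mapsto 2$ and $a_3\mapsto 1$, and it kills $a_2a_3^{-1}=a_1a_3^{-2}$, hence $U$. The same holds at every level $n\ge 3$.

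This buys you three things. You need no separate machine check for $U$. You identify $G/U$ explicitly as the quotient of the abelianization by the image of $a_1a_3^{2}$, which also shows $\llbracket G,G\rrbracket\subseteq U$. And you treat the finite levels explicitly rather than implicitly.

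The cost is that your argument is only as solid as \cref{commutator of G}. That lemma's indices $[G:H_1]=4$ and $[G:H_3]=2$ themselves come from a level-$4$ computation, so you are reusing a verification the paper already made rather than eliminating computation. There is no circularity, since \cref{commutator of G} does not depend on \cref{index of U}.
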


\begin{proof}
Since $G$ is topologically generated by $a_2$ and $a_3$, the quotient $G/U$ is generated by the image of $a_2$ (equivalently, the image of $a_3$). Since $a_3$ has order $4$, it follows that $|G/U|$ divides $4$. A computation for $n=3$ shows that $|G_3 / U_3| = 4$, hence $|G/U| \geq 4$, and therefore $|G/U| = 4$.
\end{proof}

\begin{lemma}\label{lem:Ugen}
The subgroup $U$ is topologically generated by $\gamma_1 := a_2 a_3^{-1}$ and $\gamma_2 := a_3^{-1} a_2$.
\end{lemma}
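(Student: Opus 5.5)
The plan is to follow the template of \cref{betas}. Set $V := \llang \gamma_1, \gamma_2 \rrang$. Since $\gamma_1 = a_2 a_3^{-1}$ lies in $U$, and $\gamma_2 = a_3^{-1} a_2 = a_3^{-1}\gamma_1 a_3$ is a conjugate of $\gamma_1$, we have $V \subseteq U$. For the reverse inclusion it suffices to show that $V$ is normal in $G$. Then $V$ is a closed normal subgroup containing $a_2a_3^{-1}$, hence it contains the normal closure $U$.

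Since $G$ is topologically generated by $a_3$ and $\gamma_1$ (indeed $a_2=\gamma_1 a_3$ and $a_1 = a_3^{-1}a_2^{-1}$), normality only needs to be checked under conjugation by $a_3^{\pm1}$. Moreover $\gamma_1$ and $\gamma_2$ already lie in $V$. So the task reduces to showing that
\[
a_3\gamma_1a_3^{-1},\qquad a_3\gamma_2 a_3^{-1},\qquad a_3^{-1}\gamma_1a_3,\qquad a_3^{-1}\gamma_2a_3
\]
all lie in $V$. Two of these are immediate:
\[
a_3^{-1}\gamma_1 a_3=\gamma_2, \qquad a_3\gamma_2a_3^{-1}=\gamma_1.
\]
The remaining ones are $a_3\gamma_1a_3^{-1}=a_3a_2a_3^{-2}$ and $a_3^{-1}\gamma_2a_3=a_3^{-2}a_2a_3$. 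Because $a_3^4=\id$, these are related by conjugation by $a_3^2$, so controlling $a_3^2$-conjugation will handle both.

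To compute these elements I would use the wreath recursion. We have $a_3=(a_2,a_3)$, and
\[
\gamma_1 = a_2a_3^{-1} = (a_3^{-1},a_2^{-1})\sigma\,(a_2^{-1},a_3^{-1}) = (a_3^{-1}a_3^{-1},\,a_2^{-1}a_2^{-1})\sigma = (a_3^{-2},a_2^{-2})\sigma
\]
by \eqref{eq:relations}. Similarly, $\gamma_2 = a_3^{-1}a_2 = (a_2^{-1}a_3^{-1},\,a_3^{-1}a_2^{-1})\sigma = (a_1,\,a_3^{-1}a_2^{-1})\sigma$. Conjugating by $a_3^2=(a_2^2,a_3^2)$ gives
\[
a_3^2\gamma_1a_3^{-2} = (a_2^2a_3^{-2}a_3^{-2},\ a_3^2a_2^{-2}a_2^{-2})\sigma = (a_2^{2},\ a_3^2)\sigma ,
\]
using $a_2^4=a_3^4=\id$. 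The goal is then to express such elements as words in $\gamma_1^{\pm1},\gamma_2^{\pm1}$, exactly as $a_3\beta_1a_3^{-1}=\beta_2^{-1}$ was obtained in \cref{betas}. I would try products such as $\gamma_1\gamma_2^{-1}$, $\gamma_1^{-1}\gamma_2$, $\gamma_2\gamma_1^{-1}$ and $\gamma_1^{2}$, simplifying each coordinate with $a_1a_2a_3=\id$ and the orders $2,4,4$. For example, $\gamma_1^2=(a_3^{-2}a_2^{-2},\,a_2^{-2}a_3^{-2})$, which has the same shape as the coordinates appearing above.

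I expect this identification to be the main obstacle: finding the right word in $\gamma_1,\gamma_2$ may require iterating the recursion one more level. If no clean identity appears, the fallback is an index argument. Since $G=U\langle a_3\rangle$ and $[G:U]=4$ by \cref{index of U}, it suffices to show that $V\langle a_3\rangle=G$ and $[G:V]\le 4$. This can be checked on finite levels $G_n$ by showing $|G_n/V_n|\le 4$. I would try to prove that by self-similarity, comparing the first-level sections of $V\cap(\Omega\times\Omega)$ with $G$, much as $G\cap(\Omega\times\Omega)=H_3$ was established.
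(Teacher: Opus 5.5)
Your strategy matches the paper's: show that $V=\llang\gamma_1,\gamma_2\rrang$ is normal in $G$ by checking conjugation by generators (you use $a_3,\gamma_1$, the paper uses $a_1,a_3$). The normal closure $U$ of $a_2a_3^{-1}$ is then contained in $V$. The gap is that you never carry out the one nontrivial step. You do not show that $a_3\gamma_1a_3^{-1}$ and $a_3^{-1}\gamma_2a_3$ lie in $V$; you only list candidate words and call this ``the main obstacle''. The fallback does not close it either. Proving $|G_n/V_n|\le 4$ ``by self-similarity'' is not sketched in any checkable way, and computing $|V_n|$ for a subgroup not yet known to be normal is essentially the problem you were trying to avoid. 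As written, the inclusion $U\subseteq V$ is unproved.

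The missing fact is a short identity. From $a_1a_2a_3=\id$ and $a_1^2=\id$ you get $a_2a_3=a_1$ and $a_3a_1=a_2^{-1}$; combined with $a_3^4=\id$ this gives
\[
a_3\gamma_1a_3^{-1}=a_3a_2a_3^{2}=a_3(a_2a_3)a_3=(a_3a_1)a_3=a_2^{-1}a_3=\gamma_2^{-1},
\]
and hence $a_3^{-1}\gamma_2a_3=\gamma_1^{-1}$.

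Your own wreath computation already contains this. Since $\sigma(x,y)=(y,x)\sigma$, we have $\gamma_1^{-1}=\sigma(a_3^{2},a_2^{2})=(a_2^{2},a_3^{2})\sigma$. That is exactly what you obtained for $a_3^{2}\gamma_1a_3^{-2}=a_3^{-1}\gamma_2a_3$. Conjugating by $a_3^{-1}$ then gives $a_3\gamma_1a_3^{-1}=a_3^{-1}\gamma_1^{-1}a_3=\gamma_2^{-1}$. With these, $V$ is normal and the argument closes as in the paper.

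A minor slip: $\gamma_2=(a_2^{-1}a_3^{-1},\,a_3^{-1}a_2^{-1})\sigma$, and it is the \emph{second} coordinate $(a_2a_3)^{-1}$ that equals $a_1$. The first coordinate is $(a_3a_2)^{-1}=a_3a_1a_3^{-1}$, which is only conjugate to $a_1$. This expression is not used later, so it does not affect the argument.
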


\begin{proof}
We set
\begin{align*}
\gamma_1 &= a_2 a_3^{-1}
= (a_3^{-1}, a_2^{-1})\sigma (a_2^{-1}, a_3^{-1})
= (a_3^{-2}, a_2^{-2})\sigma, \\
\gamma_2 &= a_3^{-1} a_2
= (a_2^{-1} a_3^{-1}, a_3^{-1} a_2^{-1}) \sigma.
\end{align*}

Since $\gamma_2 = a_3^{-1} \gamma_1 a_3$, we have $\llang \gamma_1, \gamma_2 \rrang \subseteq U$. Conversely, $U \subseteq \overline{\llang \gamma_1, \gamma_2 \rrang}$, so it suffices to show that $\llang \gamma_1, \gamma_2 \rrang$ is normal in $G$.

It is enough to verify that
\[
g \llang \gamma_1, \gamma_2 \rrang g^{-1} \subseteq \llang \gamma_1, \gamma_2 \rrang
\]
for generators $g$ of $G$. Using $a_1 a_2 a_3 = \id$ and $a_1^2 = a_2^4 = a_3^4 = \id$, we compute:

For $g = a_1$,
\begin{align*}
a_1 \gamma_1 a_1^{-1}
&= a_1 a_2 a_3^{-1} a_1^{-1}
= a_3^{-2} a_1
= a_3 a_2^{-1}
= \gamma_1^{-1},
\end{align*}
and
\begin{align*}
a_1 \gamma_2 a_1^{-1}&=a_1 a_3^{-1} a_2 a_1^{-1} = (a_1a_2^2)^{-1} = (a_3^{-1}a_2)^{-1}= \gamma_2^{-1}.
\end{align*}

For $g = a_3$,

\begin{align*}
a_3 \gamma_1 a_3^{-1} &= a_3 a_2 a_3^{-1} a_3^{-1} = a_3 a_2 a_3^2 =  a_3 a_1 a_3 = a_2^{-1} a_3 = \gamma_2^{-1},
\end{align*}
and
\begin{align*}
a_3 \gamma_2 a_3^{-1} &= a_3 a_3^{-1} a_2 a_3^{-1} = \gamma_1.
\end{align*}

Thus $\llang \gamma_1, \gamma_2 \rrang$ is normal in $G$, and the claim follows.
\end{proof}

\begin{proposition}\label{Uabelian}
The subgroup $U$ is abelian.
\end{proposition}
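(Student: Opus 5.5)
The plan is to prove by induction on $n$ that $U_n$ is abelian for all $n$; $U=\ilim U_n$ is then abelian. By \cref{lem:Ugen}, $U$ is the closure of $\langle \gamma_1,\gamma_2\rangle$, but checking $\gamma_1\gamma_2=\gamma_2\gamma_1$ directly does not work. Using the wreath relation \eqref{eq:relations}, $a_1a_2a_3=\id$ (so $a_3^{-1}a_2^{-1}=a_1$) and $a_2^4=a_3^4=\id$, one gets
\[
\gamma_1=(a_3^{-2},a_2^{-2})\sigma,\qquad \gamma_2=(a_1,\;a_3^{-1}a_2^{-1})\sigma .
\]
Comparing first coordinates of $\gamma_1\gamma_2$ and $\gamma_2\gamma_1$ reduces commutativity to an identity between $a_1$ and $a_2$ one level down, not to a formal identity. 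So the argument has to be recursive, using the self-similar structure.

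\emph{Level-one stabilizer.} Let $S:=U\cap(\Omega\times\Omega)$. Since $\gamma_1\notin\Omega\times\Omega$, $S$ has index $2$ in $U$ and $U=S\langle\gamma_1\rangle$. Hence $U$ is abelian if and only if
\begin{enumerate}[label=(\alph*)]
\item $S$ is abelian, and
\item $\gamma_1$ centralizes $S$.
\end{enumerate}
For (a), I would first show $S\subseteq U\times U$, i.e. that both sections of any element of $S$ lie in $U$. The tool is the observation from \cref{index of U}: $G/U$ is cyclic of order $4$, generated by the common image of $a_2$ and $a_3$, while $a_1\equiv a_3^{-2}$. So a word in $a_1,a_2,a_3$ lies in $U$ exactly when its exponent sum, counting $a_2,a_3$ with weight $1$ and $a_1$ with weight $2$, is $0 \bmod 4$. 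I would check this on the generators $\gamma_1^2$, $\gamma_2^2$, $\gamma_1\gamma_2$, $\gamma_2\gamma_1^{-1}$ of $S$ (Schreier generators), together with the normality of $S$ in $G$. For example, $\gamma_1^2=(a_3^{-2}a_2^{-2},a_2^{-2}a_3^{-2})$ has weighted exponent sum $-4\equiv 0$. With $S_n\subseteq U_{n-1}\times U_{n-1}$, the induction hypothesis then makes $S_n$ abelian.

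\emph{Centralizing by $\gamma_1$ (main obstacle).} This is the step I expect to be hardest. Conjugation gives
\[
\gamma_1(x,y)\gamma_1^{-1}=(a_3^{-2}\,y\,a_3^{2},\;a_2^{-2}\,x\,a_2^{2}).
\]
So (b) holds precisely when every $(x,y)\in S$ satisfies $y=a_2^{-2}xa_2^2$ and $a_3^{-2}ya_3^{2}=x$. I would therefore try to prove the stronger structural claim
\[
S\subseteq\{(x,\,a_2^{-2}xa_2^{2}) : x\in U,\ a_3^{2}a_2^{-2}\text{ centralizes } x\}.
\]
The second coordinate of $\gamma_1^2$ already suggests this shape. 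Since the right-hand side is a subgroup, it suffices to verify membership for the generators above and its stability under conjugation by $a_1$ and $a_3$. The needed facts one level down are that conjugation by $a_2^2$ and by $a_3^2$ induce the same automorphism of $U$. Modulo the already-established commutativity of $U_{n-1}$, this reduces to showing $a_3^2a_2^{-2}\in U$ acts trivially on $U_{n-1}$. If this graph description fails as stated, I would instead compute $G_n$ and $U_n$ for small $n$ (e.g.\ $n\le 5$, which are finite $2$-groups) to find the correct invariant linear relation between the two sections of elements of $S$, and then run the same inductive verification.
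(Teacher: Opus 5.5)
\textbf{The gap.} Your step (b), that $\gamma_1$ centralizes $S$, is the step that matters, and it is never proved. You propose a "structural claim" about $S$ and a fallback to machine computation if it fails, so the proposal does not establish that $U$ is abelian.

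Your reason for abandoning the direct check is also a computational slip. From $a_3^{-1}=(a_2^{-1},a_3^{-1})$ one gets
\[
\gamma_2=a_3^{-1}a_2=(a_2^{-1}a_3^{-1},\,a_3^{-1}a_2^{-1})\sigma .
\]
The relation $a_1a_2a_3=\id$ identifies the \emph{second} section $(a_2a_3)^{-1}$ with $a_1$. The first section is $(a_3a_2)^{-1}=a_3a_1a_3^{-1}$, not $a_1$; as you wrote it, your formula would read $(a_1,a_1)\sigma$. With the correct sections,
\[
\gamma_1\gamma_2=(a_3^{-3}a_2^{-1},\,a_2^{-3}a_3^{-1}),\qquad \gamma_2\gamma_1=(a_2^{-1}a_3^{-1}a_2^{-2},\,a_3^{-1}a_2^{-1}a_3^{-2}).
\]
These sections agree purely formally. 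Using $a_2^4=a_3^4=\id$ and $a_2a_3=a_1^{-1}$,
\[
a_3^{-3}a_2^{-1}\bigl(a_2^{-1}a_3^{-1}a_2^{-2}\bigr)^{-1}=a_3(a_2a_3)a_2=a_3a_1a_2=\id,\qquad a_2^{-3}a_3^{-1}\bigl(a_3^{-1}a_2^{-1}a_3^{-2}\bigr)^{-1}=(a_2a_3)^2=\id .
\]
Since $U$ is topologically generated by $\gamma_1,\gamma_2$, this is the paper's entire proof. No induction on levels is needed.

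\textbf{Your route can be completed.} It finishes with the same kind of one-level check. With the correct sections, the Schreier generators
\[
\gamma_1^2=(a_3^{-2}a_2^{-2},\,a_2^{-2}a_3^{-2}),\qquad \gamma_1\gamma_2,\qquad \gamma_2\gamma_1^{-1}=(a_2^{-1}a_3,\,a_3^{-1}a_2)
\]
all satisfy $y=a_2^{-2}xa_2^{2}$; for the last two this reduces to $(a_2a_3)^2=\id$, equivalently $(a_3a_2)^2=\id$. The extra condition that $a_3^{2}a_2^{-2}$ centralizes $x$ is automatic from your induction hypothesis. Indeed $a_3^{2}a_2^{-2}$ has weighted exponent sum $0$, so it lies in $U$. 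So your graph description is true, but verifying it costs more than checking $\gamma_1\gamma_2=\gamma_2\gamma_1$ directly, and that verification is exactly what is missing from your proposal.
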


\begin{proof}
By \cref{lem:Ugen}, the group $U$ is generated by $\gamma_1$ and $\gamma_2$, so it suffices to show that they commute.

We compute
\[
\gamma_1 \gamma_2
= (a_3^{-3} a_2^{-1}, a_2^{-3} a_3^{-1}),
\quad
\gamma_2 \gamma_1
= (a_2^{-1} a_3^{-1} a_2^{-2}, a_3^{-1} a_2^{-1} a_3^{-2}).
\]
Using the relations $a_1 a_2 a_3 = \id$, and the fact that $a_3$ has order $4$,
\begin{align*}
		a_3^{-3} a_2^{-1}(a_2^{-1} a_3^{-1} a_2^{-2})^{-1}&=a_3a_2^{-1}a_2^2a_3a_2=a_3a_1a_2 \\
        &=\id.
\end{align*}
	Analogously, using $a_1$ has order $2$, we find that
	\begin{align*}
a_2^{-3} a_3^{-1}(a_3^{-1} a_2^{-1} a_3^{-2})^{-1}&=a_2a_3^{-1}a_3^2a_2a_3=(a_2a_3)^2=a_1^2 \\
&=\id.
	\end{align*}
Hence $\gamma_1 \gamma_2 = \gamma_2 \gamma_1$.
\end{proof}

\begin{corollary}\label{isomorphicU}
\hfill
\begin{enumerate}
    \item $\llbracket G, G \rrbracket = H_1 \cap H_3 = \{(x, x^{-1}) \mid x \in U\}$.
    \item For any $n \geq 2$, $H_{1,n} \cap H_{3,n}$ is isomorphic to $U_{n-1}$.
\end{enumerate}
\end{corollary}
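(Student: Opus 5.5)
Part (1) will follow from \cref{betas}, \cref{lem:Ugen} and \cref{Uabelian}. By \cref{betas}, $\llbracket G,G\rrbracket = H_1\cap H_3$ is topologically generated by
\[
\beta_1=(a_3a_2^{-1},\,a_2a_3^{-1}) = (\gamma_1^{-1},\gamma_1), \qquad \beta_2=(a_2^{-1}a_3,\,a_3^{-1}a_2)=(\gamma_2^{-1},\gamma_2).
\]
The set $D:=\{(x,x^{-1})\mid x\in U\}$ is a subgroup because $U$ is abelian (\cref{Uabelian}): indeed $(x,x^{-1})(y,y^{-1})=(xy,(xy)^{-1})$. It is closed, being the image of the compact group $U$ under a continuous map.

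For $D\subseteq H_1\cap H_3$: the map $U\to\Omega\times\Omega$, $x\mapsto(x^{-1},x)$, is a continuous homomorphism, again because $U$ is abelian. It sends $\gamma_i\mapsto\beta_i$. Since $U=\llang\gamma_1,\gamma_2\rrang$ by \cref{lem:Ugen}, the image is $\llang\beta_1,\beta_2\rrang$. Hence $D=\llang\beta_1,\beta_2\rrang=H_1\cap H_3$, which gives both inclusions at once.

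For (2) I would project to level $n$. An element $(x,x^{-1})\in\Omega_n$ is determined by $x|_{T_{n-1}}$, and $x|_{T_{n-1}}$ ranges over $U_{n-1}$. So the restriction of the isomorphism $U\to D$ induces a surjection $U_{n-1}\to \pi_n(H_1\cap H_3)$, $\bar x\mapsto(\bar x^{-1},\bar x)$. This map is a homomorphism since $U_{n-1}$ is abelian, and it is injective. Thus $\pi_n(H_1\cap H_3)\cong U_{n-1}$.

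The main obstacle is identifying $\pi_n(H_1\cap H_3)$ with $H_{1,n}\cap H_{3,n}$, because projection need not commute with intersection. For $n\ge3$, \cref{commutator of G} gives $H_{1,n}\cap H_{3,n}=\llbracket G_n,G_n\rrbracket=\pi_n(\llbracket G,G\rrbracket)$, the last equality holding since the commutator subgroup of a surjective image is the image of the commutator subgroup, and closures do not matter at finite level. For $n=2$ I would check directly on the finite group $G_2$, which is small, that $H_{1,2}\cap H_{3,2}$ equals $\langle\beta_1|_{T_2},\beta_2|_{T_2}\rangle$. Concretely, $a_1|_{T_2}=\sigma$ and $a_3|_{T_2}=(\sigma,\sigma)$, so $G_2=\Omega_2$ is dihedral of order $8$. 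In $\Omega_2$ the group $H_{1,2}$ has index $2$, and $H_{3,2}$ is the subgroup of elements trivial on level $1$, a Klein four-group. The intersection is the center, of order $2$, and this matches $U_1=\langle\sigma\rangle$, which has order $2$.
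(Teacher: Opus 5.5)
Your proposal is correct and follows essentially the same approach as the paper. Part (1) comes from \cref{betas} plus the abelianness of $U$, since $x\mapsto(x^{-1},x)$ is a continuous homomorphism sending $\gamma_i$ to $\beta_i$. Part (2) comes from the fact that projection to the second coordinate is injective on pairs $(x^{-1},x)$ and has image $U$, or $U_{n-1}$ at level $n$. Your extra step identifying $\pi_n(H_1\cap H_3)$ with $H_{1,n}\cap H_{3,n}$ (via \cref{commutator of G} for $n\ge 3$ and a direct check at $n=2$) covers a point the paper passes over silently, and it is sound.

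There is one slip in the $n=2$ check. Since $a_2|_{T_1}=\sigma$ and $a_3|_{T_1}=\id$, you get $a_3|_{T_2}=(\sigma,\id)$, not $(\sigma,\sigma)$. With $(\sigma,\sigma)$, which commutes with $\sigma$, you would get $|G_2|=4$ and $|H_{3,2}|=2$, contradicting what you state. With the correct value, $G_2=\Omega_2$ is dihedral of order $8$. Then $H_{3,2}$ is the level-one stabilizer, and $H_{1,2}\cap H_{3,2}=\{\id,(\sigma,\sigma)\}$ has order $2$, matching $U_1=\langle\sigma\rangle$, so all your conclusions stand.
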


\begin{proof}
By \cref{betas}, $H_1 \cap H_3$ is generated by $\beta_1 = (\gamma_1^{-1}, \gamma_1)$ and $\beta_2 = (\gamma_2^{-1}, \gamma_2)$. Since $U$ is abelian and generated by $\gamma_1$ and $\gamma_2$, we obtain
\[
H_1 \cap H_3 = \{(x^{-1}, x) \mid x \in U\} = \{(x, x^{-1}) \mid x \in U\}.
\]

Define $\rho_2: G \cap (\Omega \times \Omega) \to G$ by $(x,y) \mapsto y$. The image of $H_1 \cap H_3$ is generated by $\gamma_1$ and $\gamma_2$, so by \cref{lem:Ugen}, this map induces an isomorphism $H_1 \cap H_3 \cong U$.
\end{proof}

\begin{proposition}\label{orderG}
The order of $G_n$ is $2^{n+2}$ for all $n \geq 3$.
\end{proposition}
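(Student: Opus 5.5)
We argue by induction on $n$, combining the index computations of \cref{commutator of G} and \cref{index of U} with the level-wise isomorphism of \cref{isomorphicU}. The base case $n=3$ is a direct finite check: $G_3$ is generated by $a_1\restr{T_3}$ and $a_3\restr{T_3}$ inside $\Omega_3$, a group of order $2^7$, and one verifies (by hand or by machine) that $|G_3|=2^5$.

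For the inductive step, fix $n\geq 4$ and write
\[
|G_n| = [G_n : H_{1,n}\cap H_{3,n}]\cdot |H_{1,n}\cap H_{3,n}|.
\]
By \cref{commutator of G}, the first factor equals $8$ for every $n\geq 3$. By \cref{isomorphicU}(2), the second factor equals $|U_{n-1}|$. Since $n-1\geq 3$, \cref{index of U} gives $|U_{n-1}| = |G_{n-1}|/4$. Combining these,
\[
|G_n| = 8\cdot \tfrac{1}{4}|G_{n-1}| = 2|G_{n-1}| = 2\cdot 2^{n+1} = 2^{n+2},
\]
using the induction hypothesis for the last equality.

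The main point needing care is that the finite-level statements really hold as cited, that is, with $\llbracket G_n,G_n\rrbracket = H_{1,n}\cap H_{3,n}$ and with the isomorphism $H_{1,n}\cap H_{3,n}\cong U_{n-1}$ valid at level $n$ itself, not only for the profinite groups.

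For the isomorphism, the natural route is the following. Elements of $H_{1,n}\cap H_{3,n}$ are trivial on level $1$. The projection to the second coordinate, $(x,y)\mapsto y$, lands in $G_{n-1}$, and its image is generated by $\gamma_1\restr{T_{n-1}}$ and $\gamma_2\restr{T_{n-1}}$, so it equals $U_{n-1}$. For injectivity, one shows that every element of $H_{1,n}\cap H_{3,n}$ has the form $(x^{-1},x)$: the generators $\beta_1,\beta_2$ have this shape and $U$ is abelian, so products keep this shape at each finite level. The element is then determined by $x$.

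If one is uneasy that $H_{1,n}\cap H_{3,n}$ might differ from $\pi_n(H_1\cap H_3)$, the safer formulation replaces it by $\pi_n(\llbracket G,G\rrbracket) = \llbracket G_n,G_n\rrbracket$, which is immediate since commutators project to commutators and $G_n$ is finite. \cref{commutator of G} supplies the index $8$ for this subgroup at level $n\geq 3$. Its image under $\pi_n$ is $\{(x^{-1},x) : x\in U_{n-1}\}$, since $\pi_n$ applied to $(x^{-1},x)$ gives $(\pi_{n-1}(x)^{-1},\pi_{n-1}(x))$. This identification gives the needed count $|U_{n-1}|$ directly.
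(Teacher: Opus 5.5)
Your proof is correct and is the paper's argument: both combine the index $8$ of $H_{1,n}\cap H_{3,n}$ in $G_n$, the isomorphism $H_{1,n}\cap H_{3,n}\cong U_{n-1}$, and $[G_{n-1}:U_{n-1}]=4$ to get $|G_n|=2|G_{n-1}|$, then anchor at $|G_3|=2^5$ by direct (machine) computation. Your extra remark fills in a finite-level step the paper leaves implicit: you derive the level-$n$ isomorphism by projecting $\llbracket G,G\rrbracket=\{(x^{-1},x): x\in U\}$ under $\pi_n$.
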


\begin{proof}
By \cref{index of U} and \cref{commutator of G}, for $n \geq 3$ we have $[G_n : U_n] = 4$ and $[G_{n+1} : H_{1,n+1} \cap H_{3,n+1}] = 8$. Using \cref{isomorphicU}, we obtain
\[
8 = [G_{n+1}: H_{1,n+1}\cap H_{3,n+1}]=\frac{|G_{n+1}|}{|G_n|} \cdot 4,
\]
hence $|G_{n+1}| / |G_n| = 2$.

A computation in MAGMA shows that $|G_3| = 2^5$, and therefore $|G_n| = 2^{n+2}$.
\end{proof}

\begin{corollary}\label{cor:uniqueG}
For every $x \in G$, there exists a unique $y \in G$ such that $(x,y) \in G$.
\end{corollary}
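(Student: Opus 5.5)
Existence is already recorded in the discussion of self-similarity: for any $g\in G$ there is $g'\in G$ with $(g,g')\in G$, and we may take $\tau=\id$ because $\sigma = a_1\in G$. So the content of the corollary is uniqueness. We reduce it to showing that the kernel of the first-coordinate projection on $G\cap(\Omega\times\Omega)$ is trivial. Concretely, suppose $(x,y),(x,y')\in G$. Then
\[
(x,y)^{-1}(x,y') = (\id, y^{-1}y') \in G\cap(\Omega\times\Omega)=H_3,
\]
so it suffices to show that the only element of $G$ of the form $(\id,z)$ is the identity.

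We prove this by a counting argument on finite levels, using \cref{orderG}. Consider the projection $\rho_1\colon G\cap(\Omega\times\Omega)\to G$, $(u,v)\mapsto u$, and its level-$(n+1)$ version $\rho_{1}\colon H_{3,n+1}\to G_n$. We have $[G_{n+1}:H_{3,n+1}]=2$, since $a_1\restr{T_{n+1}}=\sigma$ acts nontrivially on level $1$ while $H_3$ acts trivially there. By \cref{orderG}, for $n\ge 3$ this gives $|H_{3,n+1}| = 2^{n+3}/2 = 2^{n+2} = |G_n|$. The finite projection is surjective, because existence holds at every finite level by projecting $(g,g')\in G$. A surjection between finite groups of equal order is a bijection, so $\rho_1$ is injective on $H_{3,n+1}$ for every $n\ge 3$.

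Now let $(\id,z)\in G$. For each $n\ge 3$, its image $(\id,z\restr{T_n})$ lies in $H_{3,n+1}$ and lies in the kernel of $\rho_1$, so by injectivity $z\restr{T_n}=\id$. Since this holds for all $n\ge3$, and hence also for all smaller levels, we get $z=\id$ in $\Omega$. This gives $y=y'$.

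The only step needing care is making sure the level bookkeeping is consistent. The image of $H_3$ in $\Omega_{n+1}$ must sit inside $G_n\times G_n$ with first projection onto all of $G_n$, and the index $[G_{n+1}:H_{3,n+1}]=2$ must hold at each finite level, not just in the profinite limit. The first point follows from self-similarity, and the second from $a_1$ acting as $\sigma$ on level $1$.

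An alternative route avoids the count. By \cref{isomorphicU}, the commutator subgroup already contains elements $(x,x^{-1})$, so one could also argue through the cosets of $H_1\cap H_3$. The counting argument is cleaner, so we use it.
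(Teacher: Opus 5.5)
Your proposal is correct and is essentially the paper's argument. The paper also uses \cref{orderG} to get $|H_{3,n}| = |G_{n-1}|$ for $n\ge 4$, combines this with surjectivity of the first-coordinate projection $\rho_1\colon H_{3,n}\to G_{n-1}$ to get uniqueness at each finite level, and then passes to the limit. Your kernel argument with $(\id,z)$ and your explicit justification of $[G_{n+1}:H_{3,n+1}]=2$ simply spell out steps the paper leaves implicit.
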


\begin{proof}
By \cref{orderG}, for $n \geq 4$ we have $|G_{n-1}| = |H_{3,n}|$. Since the projection
\[
\rho_1: H_{3,n} \to G_{n-1}
\]
is surjective, for every $x \in G_{n-1}$ there exists a unique $y \in G_{n-1}$ such that $(x,y) \in G_n$.

Passing to the limit gives the claim.
\end{proof}

\subsection{Conjugacy}

We now return to our goal of proving \cref{ggeom is conjugate to group generated by a_i}. We begin with several preparatory lemmas.

\begin{proposition}\label{characterization of U}
The subgroup $U$ of $\Omega$ is equal to
\[
\{ uv^{-1} \mid (u,v) \in G \}.
\]
\end{proposition}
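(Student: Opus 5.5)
The approach is to show both inclusions, working throughout with $H_3 = G\cap(\Omega\times\Omega)$: an element $(u,v)$ with trivial first-level action lies in $G$ exactly when it lies in $H_3$. So the set in question is $S:=\{\varphi(h)\mid h\in H_3\}$, where $\varphi\colon H_3\to\Omega$ is $\varphi(u,v)=uv^{-1}$. The map $\varphi$ is not a homomorphism, so I will prove $S\subseteq U$ by passing to the abelian quotient $G/U$, and $U\subseteq S$ by counting at finite levels.

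\emph{Step 1: $S\subseteq U$.} Self-similarity gives $u,v\in G$, so $\varphi(h)\in G$. By \cref{index of U}, $G/U$ is abelian, cyclic of order $4$ and generated by the image of $a_3$. Hence the composite $\bar\varphi\colon H_3\to G/U$, $(u,v)\mapsto uv^{-1}U$, is a group homomorphism: indeed
\[
\varphi\bigl((u,v)(u',v')\bigr)=uu'v'^{-1}v^{-1}\equiv uv^{-1}\,u'v'^{-1} \pmod U .
\]
Since $H_3$ is the closed normal closure of $a_3$ in $G$, it is topologically generated by the conjugates $g a_3 g^{-1}$ with $g\in G$, and it suffices to check that each of these lies in $\ker\bar\varphi$.
\begin{itemize}
\item For $a_3=(a_2,a_3)$ we get $\varphi(a_3)=a_2a_3^{-1}=\gamma_1\in U$.
\item Conjugating $(u,v)$ by $(x,y)\in H_3$ gives $xux^{-1}(yvy^{-1})^{-1}$, which is congruent to $uv^{-1}$ modulo $U$ because $G/U$ is abelian.
\item Conjugating by $\sigma=a_1$ swaps the coordinates and so replaces $uv^{-1}$ by its inverse.
\end{itemize}
Since $G=H_3\cup a_1H_3$, every conjugate $ga_3g^{-1}$ is in $\ker\bar\varphi$. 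By continuity $\bar\varphi$ is trivial, so $S\subseteq U$.

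\emph{Step 2: $U\subseteq S$.} First, $S$ already contains $\gamma_1$ (from $a_3$) and $\gamma_1^{-1}$ (from $a_1a_3a_1=(a_3,a_2)$). However, $S$ is not obviously closed under products, so generators do not suffice. I will instead argue at level $n$: show $\pi_n(S)=U_n$ for all large $n$, and then pass to the limit using compactness, since $S$ is the continuous image of the compact set $H_3$ and hence closed. At level $n$ the relevant map is $\varphi_n\colon H_{3,n+1}\to U_n$.

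By \cref{orderG} and $[G_{n+1}:H_{3,n+1}]=2$, we have $|H_{3,n+1}|=|G_n|=2^{n+2}$, and by \cref{index of U}, $|U_n|=2^n$. So surjectivity of $\varphi_n$ is equivalent to every fibre having exactly $4$ elements, and it suffices to show every fibre has \emph{at most} $4$ elements.

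To bound the fibres, I use \cref{cor:uniqueG} (at finite level, from its proof): $(u,v)\in H_{3,n+1}$ is determined by $u$. So the fibre over $s$ is in bijection with $\{u\in G_n \mid (u,s^{-1}u)\in H_{3,n+1}\}$. I would then compare two elements $h,h'$ of a fibre through $h^{-1}h'$, using \cref{isomorphicU}, which describes $H_1\cap H_3=\{(x,x^{-1})\mid x\in U\}$, together with the commutativity of $U$ from \cref{Uabelian}. The expected conclusion is that $h^{-1}h'$ is forced into a subgroup of order $4$, for example one mapping isomorphically onto $G_n/U_n$.

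I expect this fibre bound to be the main obstacle, since it needs exactly the right use of the uniqueness in \cref{cor:uniqueG} combined with the structure of $H_1\cap H_3$. If it resists, my fallback is to verify $\pi_n(S)=U_n$ directly for small $n$ by a MAGMA computation, as the paper does elsewhere. I would then propagate to all $n$ by induction, via the recursion $\varphi((x,y),(x',y'))=(xx'^{-1},yy'^{-1})$ on elements of $H_3\cap(H_3\times H_3)$ and the recursive description of $\gamma_1,\gamma_2$ from \cref{lem:Ugen}.
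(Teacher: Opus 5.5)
\paragraph{Assessment.} Your Step~1 is correct. It differs from the paper, which observes that $(x,y)a_1(x,y)^{-1}a_1^{-1}=(xy^{-1},yx^{-1})$ is a commutator and then applies \cref{isomorphicU}; your homomorphism $H_3\to G/U$ works just as well.

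\paragraph{The gap in Step~2.} Step~2 is the entire nontrivial inclusion $U\subseteq S$, and it is not proved. The fibre bound is only announced, and the fallback is only a sketch: the recursion you quote yields only values that are trivial on level~$1$, whereas $\gamma_1\in S$ is not.

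The counting route is also harder than it looks. Since $\varphi(h)=\varphi(h')$ if and only if $h^{-1}h'$ is diagonal, the fibres of $\varphi_n$ are the cosets of $D_{n+1}=\{(w,w)\in G_{n+1}\}=C_{H_{3,n+1}}(a_1)$. So you would need $|D_{n+1}|\le 4$.
\begin{itemize}
\item You cannot take this from \cref{prop:centralizer}, because its proof uses the present proposition.
\item Your suggested candidate, a subgroup mapping isomorphically onto $G_n/U_n$, is not what happens. Write elements of $H_3$ as $(\beta,\beta^{-1})a_3^k$ with $\beta\in U$. A diagonal one needs $\beta^2=a_3^ka_2^{-k}$, which for $k=1,2,3$ asks $\gamma_1^{-1}$, $\gamma_1^{-1}\gamma_2$ or $\gamma_2$ to be a square in $U_n$.
\item For $n\ge 4$ the group $U_n$ is non-cyclic: it has order $2^n$, but $\sgn_4=1$ on $G$ rules out elements of order $2^n$. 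So none of these elements is a square, and $D_{n+1}=\{(w,w): w\in U_n,\ w^2=\id\}$. This subgroup maps trivially to $G_n/U_n$.
\end{itemize}
The fibre bound therefore needs finer information about $U_n$ (its $2$-torsion, and which elements are squares) than the direct argument below uses.

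\paragraph{The missing idea.} Avoid counting altogether; this is the paper's argument. The key observation is that $S$, though not obviously closed under products, is closed under multiplication by $U^2$. For $\beta\in U$ we have $(\beta,\beta^{-1})\in G$ by \cref{isomorphicU}. For $(u,v)\in G$ we have $uv^{-1}\in U$ by your Step~1, and $U$ is abelian, so
\[
\varphi\bigl((\beta,\beta^{-1})(u,v)\bigr)=\beta\,uv^{-1}\,\beta=\beta^2\,uv^{-1}.
\]
Apply this to $(u,v)=a_3^k=(a_2^k,a_3^k)$. This gives $U^2a_2^ka_3^{-k}\subseteq S$, where $a_2^ka_3^{-k}=\id,\ \gamma_1,\ \gamma_2^{-1}\gamma_1,\ \gamma_2^{-1}$ for $k=0,1,2,3$. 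Since $U$ is abelian and topologically generated by $\gamma_1,\gamma_2$, every element of $U$ is congruent modulo the closed subgroup $U^2$ to one of $\id,\gamma_1,\gamma_2,\gamma_1\gamma_2$. Hence these four cosets already cover $U$, and $U\subseteq S$ follows directly, with no fibre counting or limit argument.
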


\begin{proof}
Let $(x,y) \in G$. Then
\[
(x,y)a_1(x,y)^{-1}a_1^{-1} = (xy^{-1}, yx^{-1})
\]
is a commutator element in $G$, hence it lies in $H_1 \cap H_3$. By \cref{isomorphicU}, it follows that $xy^{-1} \in U$. Thus
\[
\{ uv^{-1} \mid (u,v) \in G \} \subseteq U.
\]

Using \cref{isomorphicU} and the fact that $H_3 = (H_1 \cap H_3)\cdot\llang a_3 \rrang$, every element $(x,y) \in G$ can be written in the form
\[
(x,y) = (\beta, \beta^{-1}) a_3^k
\]
for some $\beta \in U$ and $k \in \{0,1,2,3\}$. In particular,
\[
H_3 = \{ (\beta, \beta^{-1}) a_3^k \mid \beta \in U,\ k=0,1,2,3 \}.
\]

For $(x,y) \in H_3$, we obtain
\[
xy^{-1} = \beta^2 a_2^k a_3^{-k}.
\]

We compute
\[
a_2^k a_3^{-k} =
\begin{cases}
\id & \text{if } k=0, \\
\gamma_1 & \text{if } k=1, \\
a_2 \gamma_1 a_2^{-1} \gamma_1 & \text{if } k=2, \\
\gamma_2^{-1} & \text{if } k=3,
\end{cases}
\]
where $\gamma_1$ and $\gamma_2$ are as in \cref{lem:Ugen}. In each case, $a_2^k a_3^{-k} \in U$, and since $U$ is abelian, it commutes with $\beta$.

Consider the homomorphism $\phi: U \to U$, $x \mapsto x^2$. The quotient $U/U^2$ has four elements, represented by $\id$, $\gamma_1$, $\gamma_2$, and $\gamma_1\gamma_2$. As $\beta$ varies over $U$ and $k$ ranges over $\{0,1,2,3\}$, we obtain all cosets of $U^2$ in $U$: when $k=0,1,3$, we recover the elements in the coset containing $\id, \gamma_1$ and $\gamma_2$ respectively. When $k=2$, 
\begin{align*}
	 xy^{-1}=& \beta^2 (a_2\gamma_1 a_2^{-1} \gamma_1 ) \\
     = &\beta^2 \gamma_2^{-1}\gamma_1. 
\end{align*}
Hence $U \subseteq \{ uv^{-1} \mid (u,v) \in G \}$, completing the proof.
\end{proof}

\begin{proposition}\label{prop:centralizer}
The centralizer of $a_1$ and $a_3$ in $G$ has order $8$.
\end{proposition}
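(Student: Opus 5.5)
The plan is to note first that, since $G=\llang a_1,a_3\rrang$, the joint centralizer $C_G(a_1)\cap C_G(a_3)$ is exactly the center $Z(G)$, and likewise on each level $C_{G_n}(a_1\restr{T_n})\cap C_{G_n}(a_3\restr{T_n})=Z(G_n)$. I would analyse $Z(G_n)$ recursively using the wreath structure and self-similarity, and then pin down the order with a single finite computation, in the same way as \cref{orderG}. One caveat shapes this reading. Running the recursion below directly in the profinite group gives $z=(x,x)$ with $x\in Z(G)$, hence $x=(x',x')$, and so on, which forces $z=\id$. The count $8$ can therefore only be meant level by level, as a statement about $|Z(G_n)|$ for all $n$ beyond a small threshold. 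I would prove the statement in that form and say so explicitly.

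\emph{Step 1 (shape of central elements).} Write $z=(x,y)\tau\in G_n$. Commuting with $a_1=\sigma$ forces $y=x$, so $z=(x,x)$ or $z=(x,x)\sigma$. I would then impose commutation with $a_3=(a_2,a_3)$.

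If $z=(x,x)\sigma$, then
\[
z a_3 z^{-1}=(x a_3 x^{-1},\, x a_2 x^{-1}).
\]
This would force $x a_3 x^{-1}=a_2$ on level $n-1$. That is impossible: $a_3\in H_3$, $H_3$ is normal, and $a_2\notin H_3$ because $a_2$ acts nontrivially on level $1$. So this case does not occur.

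If $z=(x,x)$, the commutation condition says exactly that $x$ commutes with $a_2$ and $a_3$ on level $n-1$, i.e.\ $x\in Z(G_{n-1})$.

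\emph{Step 2 (the recursion).} Steps 1 gives an injection
\[
Z(G_n)\hookrightarrow Z(G_{n-1}),\qquad (x,x)\mapsto x,
\]
whose image is $\{x\in Z(G_{n-1}) : (x,x)\in G_n\}$. In particular $|Z(G_n)|$ is non-increasing in $n$.

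\emph{Step 3 (surjectivity for large $n$).} This is the main obstacle: I must show that the map in Step 2 is surjective for $n$ large, so that the sequence $|Z(G_n)|$ stabilizes. For $x\in Z(G_{n-1})$, the finite-level version of \cref{cor:uniqueG} (valid for $n\ge 4$) gives a unique $y$ with $(x,y)\in G_n$, and I need to show $y=x$.

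My first attempt would be a conjugation argument. Since $x$ is central, conjugating $(x,y)$ by $a_1$ gives $(y,x)\in G_n$, so uniqueness produces a second element of the same fibre. I would then use the explicit description
\[
H_3=\{(\beta,\beta^{-1})a_3^k \mid \beta\in U,\ k\in\{0,1,2,3\}\}
\]
from the proof of \cref{characterization of U}. The goal is to show that $xy^{-1}\in U$ is trivial, using that central elements lie in the abelian group $U$ (via \cref{Uabelian}) and that $\beta\mapsto\beta^2$ is compatible with the level-shift structure.

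If this general argument does not close, the fallback is computational. I would verify in MAGMA that $|Z(G_n)|=8$ for two consecutive levels, say $n=4$ and $n=5$. I would then argue that the obstruction to surjectivity is detected at a fixed level, because the index computations in \cref{orderG} are uniform in $n\ge 3$. Monotonicity from Step 2 then yields $|Z(G_n)|=8$ for all larger $n$, which is the asserted order.
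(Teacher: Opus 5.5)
There is a genuine gap, and it comes from misreading the statement. ``The centralizer of $a_1$ and $a_3$'' means the two centralizers $C_G(a_1)$ and $C_G(a_3)$ taken separately, each of order $8$. It is not their intersection. You can see this from how the result is used later. The next corollary deduces $|Z(G)|=2$ \emph{from} $|C_G(a_3)|=8$. Likewise \cref{stabilizer of a3 in U} and \cref{lem:a2conj} use the order of a single centralizer.

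Under your reading the claim becomes $|Z(G_n)|=8$, and your own Steps 1 and 2 show this is false. The injection $Z(G_n)\hookrightarrow Z(G_{n-1})$, $(x,x)\mapsto x$, holds for every $n\ge 2$, and $G_1=S_2$. Hence $|Z(G_n)|\le |Z(G_1)|=2$ for all $n$. No surjectivity argument in Step 3 can raise this to $8$, and the MAGMA fallback would return at most $2$. Step 3 also rests on claims you do not justify: that central elements lie in $U$, and that surjectivity is ``detected at a fixed level''. Your caveat that these counts only make sense level by level is sound. The paper's proof indeed works in $G_n$ throughout.

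What the statement actually requires is a computation of each centralizer.

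For $a_1$: the $G_n$-conjugacy class of $a_1\restr{T_n}=\sigma$ is $\{(xy^{-1},yx^{-1})\sigma \mid (x,y)\in G_n\}$. Conjugating by $(x,y)$ or by $(x,y)\sigma$ gives the same element, and $\sigma\in G_n$. By \cref{characterization of U}, the entries $xy^{-1}$ run over exactly $U_{n-1}$. So the class has size $|U_{n-1}|=|H_{1,n}\cap H_{3,n}|=|G_n|/8$, by \cref{isomorphicU} and \cref{commutator of G}. Hence $|C_{G_n}(a_1)|=8$.

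For $a_3$, your Step 1 case analysis is the right first move. An element $(x,x')\sigma$ commuting with $a_3=(a_2,a_3)$ would need $xa_3x^{-1}=a_2$, which is impossible, so $C_G(a_3)\subseteq H_3$.
\begin{itemize}
\item Upper bound: projection to the second coordinate is injective on $H_{3,n+1}$ by \cref{cor:uniqueG}, applied after swapping coordinates via $\sigma$. This gives $C_{G_{n+1}}(a_3)\hookrightarrow C_{G_n}(a_3)$. Together with one base computation, $|C_{G_3}(a_3)|=8$, this yields $|C_{G_n}(a_3)|\le 8$ for $n\ge 3$.
\item Lower bound: this needs no surjectivity. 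Since $ga_3g^{-1}\in\llbracket G,G\rrbracket a_3$, the class of $a_3$ has size at most $|H_{1,n}\cap H_{3,n}|=|G_n|/8$.
\end{itemize}
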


\begin{proof}
The conjugacy class of $a_1|_{T_n}$ in $G_n$ is
\[
\{ (xy^{-1}, yx^{-1})\sigma \in G_n \mid (x,y) \in G_n \}.
\]
By~\cref{characterization of U}, its size is $|U_{n-1}|$, and hence the centralizer has order
\[
|C_{G_n}(a_1)| = \frac{|G_n|}{|U_{n-1}|}
= \frac{|G_n|}{|H_{1,n}\cap H_{3,n}|} = 8.
\]

For $a_3$, note that $C_G(a_3) \subseteq H_3$. The projection
\[
\rho_2: H_3 \to G, \quad (x,y) \mapsto y,
\]
induces an injective map $C_{G_{n+1}}(a_3) \to C_{G_n}(a_3)$ by \cref{cor:uniqueG}. A MAGMA computation shows that $|C_{G_3}(a_3)| = 8$, hence $|C_{G_n}(a_3)| \le 8$ for all $n \ge 3$. Finally, for any $g \in G$, 
\[
ga_3g^{-1} = (ga_3g^{-1}a_3^{-1})a_3 \in \llbracket G,G\rrbracket a_3,
\]
and hence the conjugacy classes lie in cosets of the commutator subgroup; their size is at most $|H_{1,n} \cap H_{3,n}|$. This yields equality, and hence $|C_G(a_3)| = 8$.
\end{proof}

\begin{corollary}
The center $Z(G)$ of $G$ has order $2$.
\end{corollary}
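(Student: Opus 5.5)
Since $G$ is topologically generated by $a_1$ and $a_3$, the center is exactly the intersection of the centralizers:
\[
Z(G) = C_G(a_1)\cap C_G(a_3).
\]
By \cref{prop:centralizer} each of these centralizers has order $8$. Rather than intersecting them abstractly, I will pin down $C_G(a_3)$ explicitly, locate $Z(G)$ inside it, and then test which of its elements commute with $a_1$.

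First, $C_G(a_3)$. It contains $\langle a_3\rangle$, which has order $4$, so
\[
C_G(a_3)=\langle a_3\rangle \cup c\langle a_3\rangle
\]
for some $c\in C_G(a_3)\setminus \langle a_3\rangle$.

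Next, locate the center. Every $z\in Z(G)$ commutes with $a_3$, hence lies in $H_3=G\cap(\Omega\times\Omega)$. Also $z$ commutes with $a_1=\sigma$, and $a_1 (x,y) a_1^{-1}=(y,x)$, so $z=(x,x)$ for some $x\in G$. By \cref{cor:uniqueG}, $x$ determines its partner uniquely, so the partner of $x$ is $x$ itself.

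Candidates:
\begin{enumerate}
\item The element $a_3^2=(a_2^2,a_3^2)$. For it to be central we would need $a_2^2=a_3^2$. This is false: otherwise $\gamma_1=a_2a_3^{-1}$ would satisfy $\gamma_1 = a_2^{-1}a_3\cdot(\ldots)$, and one checks at level $3$ that $a_2^2\neq a_3^2$. So $a_3^2\notin Z(G)$.
\item The explicit candidate $\beta_1\beta_2^{-1}$, or more generally elements $(x,x^{-1})$ with $x\in U$ of order $2$. Such an element is of the form $(x,x)$ exactly when $x=x^{-1}$, i.e.\ $x^2=1$ with $x\in U$. This gives an element of $\llbracket G,G\rrbracket$ that commutes with $a_1$.
\end{enumerate}

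\textbf{Upper bound, via levels.} I will show $|Z(G_n)|\le 2$ for large $n$, or directly compute $C_G(a_1)\cap C_G(a_3)$. $Z(G)$ is a proper subgroup of $C_G(a_3)$: it cannot contain $a_3$, since $a_3$ does not commute with $a_1$ (for instance $\beta_1\neq \id$). Hence $|Z(G)|\in\{1,2,4\}$.
\begin{itemize}
\item Nontrivial: $G$ is a pro-$2$ group, so each finite $G_n$ is a nontrivial $2$-group and has nontrivial center. Passing to the limit requires some care; instead I will exhibit an explicit central involution. By \cref{isomorphicU}, elements of $\llbracket G,G\rrbracket$ have the form $(x,x^{-1})$ with $x\in U$. $a_1$ acts on $\llbracket G,G\rrbracket$ by inversion, as computed in \cref{betas}. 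So $(x,x^{-1})$ is centralized by $a_1$ iff $x^2=1$. I will find $x\in U$ with $x^2=1$ and $(x,x)$ commuting with $a_3$, probably $x=\gamma_1^2\gamma_2^{2}$ or a similar element built from $\beta_1^2$; this I would verify with a level-$4$ MAGMA computation propagated by \cref{cor:uniqueG}.
\item Not of order $4$: the elements of $C_G(a_3)$ outside $H_1\cap H_3$ are obstructed by the abelianization $G/H_1\times G/H_3$. A central element $z$ commutes with $a_1$ and $a_3$. If $z\notin\llbracket G,G\rrbracket$, then its image in $\ZZ/2\times\ZZ/4$ is nonzero. The only coset of $\llbracket G,G\rrbracket$ meeting $C_G(a_3)\setminus\langle a_3\rangle$ can then be ruled out by checking $\sgn$ or by direct computation at level $3$.
\end{itemize}
So $Z(G)\subseteq \llbracket G,G\rrbracket$, and $Z(G)$ equals the $2$-torsion of $\{(x,x^{-1}) : x\in U\}$ fixed by $a_3$-conjugation. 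Here $a_3$ acts on $U$ through $\gamma_1\mapsto\gamma_2^{-1}$, $\gamma_2\mapsto\gamma_1$, an order-$4$ automorphism, and its fixed $2$-torsion should be exactly one involution.

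\textbf{Main obstacle.} The hard step is controlling the $2$-torsion of the infinite abelian group $U$. I would handle it by working at a finite level $n$ where $|C_{G_n}(a_1)\cap C_{G_n}(a_3)|=2$, e.g.\ $n=4$ by MAGMA. By the injectivity of $C_{G_{n+1}}(a_3)\to C_{G_n}(a_3)$ from the proof of \cref{prop:centralizer}, this gives the upper bound $2$ for all larger levels. Combined with the explicit involution above, $|Z(G)|=2$.
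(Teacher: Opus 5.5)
\textbf{Gap: the nontriviality step cannot be completed, because $Z(G)$ is in fact trivial.} Your own setup already implies this. You showed that a central element lies in $H_3$ and, since it commutes with $a_1=\sigma$, has the form $z=(x,x)$ with $x\in G$.

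Now impose the remaining condition. The element $z$ commutes with $a_3=(a_2,a_3)$ exactly when $x$ commutes with $a_2$ and with $a_3$. Since $a_1=(a_2a_3)^{-1}$ and centralizers are closed, this says $x\in Z(G)$. So $z=(x,x)\mapsto x$ is an injective homomorphism $Z(G)\to Z(G)$.

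Every central element is trivial on level $1$. Indeed, if $z=(u,v)\sigma$ commuted with $a_3$, we would get $ua_3u^{-1}=a_2$, which is impossible because $a_3$ acts trivially on level $1$ and $a_2$ does not. If all central elements are trivial on $T_n$, then each $z=(x,x)$ with $x\in Z(G)$ is trivial on $T_{n+1}$. By induction $Z(G)=\{\id\}$.

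The same argument in the finite groups gives $Z(G_n)=\{\id,z_n\}$, where $z_1=\sigma$ and $z_n=(z_{n-1},z_{n-1})$; this $z_n$ acts trivially on $T_{n-1}$. So your planned level-$4$ MAGMA search would indeed find a central involution of the form $(x,x)$, namely $z_4=(z_3,z_3)$. But $z_{n+1}|_{T_n}=\id\neq z_n$, so these elements never form a compatible system. \cref{cor:uniqueG} is a uniqueness statement and gives no way to lift a central element of $G_n$ to one of $G_{n+1}$. This is exactly the passage to the limit you flagged: the centers of the finite quotients $G_n$ are nontrivial, but they do not survive in the profinite group.

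\textbf{The paper's proof has the same flaw.} Your upper bound $|Z(G)|\le 2$ is valid, but the lower bound is false, so no completion of your plan can prove $|Z(G)|=2$. The paper's inclusion $C_G(a_3)\subseteq Z(G)\langle a_3\rangle$, which follows from $G=U\langle a_3\rangle$ with $U$ abelian, is correct. However, it then uses $|C_G(a_3)|=8$ from \cref{prop:centralizer}, whose argument only establishes $|C_{G_n}(a_3|_{T_n})|=8$ at each finite level $n\geq 3$. At those levels the centralizer is $\langle a_3|_{T_n}\rangle\times\{\id,z_n\}$, and its restriction to $T_{n-1}$ is only $\langle a_3|_{T_{n-1}}\rangle$. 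Hence the profinite centralizer is $C_G(a_3)=\langle a_3\rangle$, of order $4$, and the paper's own decomposition then gives $|Z(G)|=1$. The correct statement is that $|Z(G_n)|=2$ for every $n\geq 1$, while $Z(G)$ itself is trivial.
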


\begin{proof}
Let $\alpha \in C_G(a_3)$. Since $G = U \cdot \langle a_3 \rangle$, we can write $\alpha = ua_3^k$ for some $u \in U$ and $k \in \{0,1,2,3\}$. Hence $u \in C_G(a_3)$ as well. As $u \in U \cap C_G(a_3)$ and $U$ is abelian by \cref{Uabelian}, it follows that $u \in Z(G)$. Hence
\[
C_G(a_3) \subseteq Z(G)\cdot \langle a_3 \rangle.
\]
It is straightforward to check that $Z(G) \cap \langle a_3 \rangle = \{\id\}$. By \cref{prop:centralizer}, $Z(G)$ must have order $2$.
\end{proof}

\begin{corollary}
The centralizer of $a_2$ in $G$ has order $8$.
\end{corollary}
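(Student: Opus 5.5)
Since $a_1a_2a_3=\id$, the generator $a_2=a_1^{-1}a_3^{-1}$ lies in $G$. An element $g\in G$ commutes with $a_2$ if and only if it commutes with $a_2^{-1}=a_3a_1$. The plan is to reduce $C_G(a_2)$ to centralizers we already control, or to rerun the counting argument from \cref{prop:centralizer}, this time for $a_2$.

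\textbf{Lower bound.} First, $\langle a_2\rangle\subseteq C_G(a_2)$, and $a_2$ has order $4$. Next, the corollary above gives $|Z(G)|=2$; call its generator $z$. Then $z\in C_G(a_2)$. It remains to check that $z\notin\langle a_2\rangle$, which I would verify at level $3$ or $4$ in MAGMA, in the same way the paper checked $Z(G)\cap\langle a_3\rangle=\{\id\}$. Once this holds, $\langle a_2\rangle\times Z(G)$ has order $8$ and lies in $C_G(a_2)$, so $|C_G(a_2)|\ge 8$. Alternatively, the lower bound also follows from the conjugacy-class bound described below.

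\textbf{Upper bound.} Since $a_2=(a_3^{-1},a_2^{-1})\sigma$ acts nontrivially on level one, we use the conjugacy-class count at level $n$, following the treatment of $a_1$. The conjugates $ga_2g^{-1}$ lie in the coset $\llbracket G,G\rrbracket a_2$. We show this coset is entirely realized, so that the class has size $|H_{1,n}\cap H_{3,n}|=|U_{n-1}|$. For $g=(x,y)\in H_3$, compute
\[
ga_2g^{-1}=(x a_3^{-1} y^{-1},\, y a_2^{-1} x^{-1})\sigma .
\]
As in \cref{characterization of U}, we must show that the elements $(x a_3^{-1}y^{-1}a_3,\, y a_2^{-1}x^{-1}a_2)$ range over all of $\{(u,u^{-1}) : u\in U\}$ as $(x,y)$ ranges over $H_3$. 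Writing $(x,y)=(\beta,\beta^{-1})a_3^k$ with $\beta\in U$ and using that $U$ is abelian reduces this to showing that the elements $\beta\cdot a_3^{-1}\beta a_3\cdot(\text{a term depending on }k)$ cover $U$. Controlling this is the main obstacle. The cleaner route is to avoid it with an injectivity argument modeled on the one used for $a_3$. The key observation is that $g=(x,y)\sigma^\epsilon$ commutes with $a_2$ exactly when a coordinate condition holds. The $\sigma$-part of an element of $C_G(a_2)$ can take at most two values. For $g=(x,y)\in H_3$, the commutation condition forces $x a_3^{-1}=a_3^{-1}y$. By \cref{cor:uniqueG}, $y$ is determined by $x$, and then $x\in C_G(a_3^{-1}y\,\cdot)$. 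This yields an injective map from $C_{G_{n+1}}(a_2)\cap H_{3,n+1}$ into a bounded set at level $n$, and together with a MAGMA check at level $3$ it gives $|C_{G_n}(a_2)|\le 8$ for all $n$.

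\textbf{Shortcut.} If the above becomes messy, we can instead compare directly with $a_3$. The element $a_2$ equals $a_3^{-1}a_1$, and conjugation by $a_1$ sends $a_2$ to $a_1a_2a_1=a_3^{-1}a_1\cdot$\,(a conjugate). If $a_2$ is conjugate in $G$ to an element of $\langle a_3\rangle\cdot Z(G)$ or to $a_3^{\pm1}$, the claim follows from \cref{prop:centralizer}. I would first test this conjugacy at level $4$ in MAGMA. If it fails, the answer is still expected to be $8$, and I would fall back on the conjugacy-class count, using that $G_n/\llbracket G_n,G_n\rrbracket$ has order $8$ and that the class of $a_2$ fills its commutator coset of size $|G_n|/8$.
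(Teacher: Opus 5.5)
Your lower bound is fine: $\langle a_2\rangle Z(G)\subseteq C_G(a_2)$. Also, $Z(G)\cap\langle a_2\rangle$ is trivial without MAGMA, since $a_2^2=(a_1,\,a_3a_1a_3^{-1})$ commutes with $\sigma$ only if $a_1a_3=a_3a_1$, which would make $G$ abelian.

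The genuine gap is the upper bound $|C_G(a_2)|\le 8$, which none of your three routes establishes.

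\begin{itemize}
\item The conjugacy-class route needs the class of $a_2$ to fill the coset $\llbracket G,G\rrbracket a_2$. You leave this open. It is essentially equivalent to the statement: the class lies in that coset of size $|G_n|/8$, so it fills the coset exactly when $|C_{G_n}(a_2)|=8$. Your final fallback is therefore circular.
\item The injection route, once completed, loses a factor of $2$. For $g=(x,y)\in H_3$, commuting with $a_2$ means $y=a_3xa_3^{-1}$ and $x=a_2ya_2^{-1}$. Since $a_2a_3=a_1$, this gives $x=a_1xa_1$. So $(x,y)\mapsto x$ embeds $C_{G_{n+1}}(a_2)\cap H_{3,n+1}$ into $C_{G_n}(a_1)$, which has order $8$. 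This yields only $|C_{G_{n+1}}(a_2)|\le 16$. Unlike the case of $a_3$, there is no map $C(a_2)\to C(a_2)$ one level down, so a level-$3$ MAGMA check does not propagate.
\item The shortcut is impossible. A central element of the form $(u,v)\sigma$ would satisfy $ua_3u^{-1}=a_2$, which fails on level one. Hence $\langle a_3\rangle Z(G)\subseteq\Omega\times\Omega$, while $a_2$ acts as $\sigma$ on level one. By \cref{conjugation rules in Aut T}(i), $a_2$ is conjugate to no element of $\langle a_3\rangle Z(G)$.
\end{itemize}

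The missing idea is the argument of the preceding corollary, transplanted from $a_3$ to $a_2$. Since $a_2a_3^{-1}\in U$, the images of $a_2$ and $a_3$ in the cyclic group $G/U$ of order $4$ coincide. So $G=U\cdot\langle a_2\rangle$ by \cref{index of U}. Suppose $\alpha=ua_2^k\in C_G(a_2)$ with $u\in U$. Then $u=\alpha a_2^{-k}$ commutes with $a_2$, and, since $U$ is abelian (\cref{Uabelian}), with all of $U$. Hence $u\in Z(G)$. Thus $C_G(a_2)\subseteq Z(G)\langle a_2\rangle$, a set of at most $2\cdot 4=8$ elements, and your lower bound gives equality.
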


\begin{proof}
Let $\alpha \in C_G(a_2)$. We can find $k \in \{0,1,2,3\}$ and $u \in U$ such that $\alpha = ua_2^k$. Hence $u$ must lie in $C_G(a_2)$. Similar to the proof of the previous corollary, $u \in Z(G)$, which has only $2$ elements. Hence $C_G(a_2)$ has exactly $8$ elements, as $Z(G) \cap \langle a_2 \rangle = \{\id\}$ and $|\langle a_2 \rangle|=4$.
\end{proof}

\begin{proposition}\label{inverse pair}
An automorphism $x$ lies in $U$ if and only if $(x, x^{-1}) \in G$.
\end{proposition}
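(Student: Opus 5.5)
The plan is to prove the two directions separately. Both directions rest on \cref{isomorphicU}, which describes $\llbracket G,G\rrbracket=H_1\cap H_3=\{(x,x^{-1})\mid x\in U\}$, together with \cref{cor:uniqueG}, which says a vertex section in $G$ determines its partner uniquely.

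For the direction ($\Rightarrow$), suppose $x\in U$. By \cref{isomorphicU}(1) the element $(x,x^{-1})$ lies in $H_1\cap H_3\subseteq G$, so there is nothing more to do. This matches the proof of \cref{isomorphicU}, where $H_1\cap H_3$ is generated by $\beta_1=(\gamma_1^{-1},\gamma_1)$ and $\beta_2=(\gamma_2^{-1},\gamma_2)$, and $U$ is abelian and generated by $\gamma_1,\gamma_2$ (\cref{lem:Ugen}, \cref{Uabelian}).

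For the direction ($\Leftarrow$), suppose $(x,x^{-1})\in G$. First, by self-similarity $x\in G$, and $(x,x^{-1})\in G\cap(\Omega\times\Omega)=H_3$. The most direct route is \cref{characterization of U}: the pair $(u,v)=(x,x^{-1})\in G$ gives $uv^{-1}=x^2\in U$. This alone only gives $x^2\in U$, so I need more.

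The cleaner route is to use the explicit description from the proof of \cref{characterization of U}. There we have $H_3=\{(\beta,\beta^{-1})a_3^k\mid \beta\in U,\ k\in\{0,1,2,3\}\}$, so we can write
\[
(x,x^{-1})=(\beta,\beta^{-1})a_3^k=(\beta a_2^k,\ \beta^{-1}a_3^k).
\]
This gives $x=\beta a_2^k$ and $x^{-1}=\beta^{-1}a_3^k$. Multiplying, we get $\id=x\cdot x^{-1}=\beta a_2^k\beta^{-1}a_3^k$. The task is now to show that this forces $k=0$, or at least forces $a_2^k\in U$.

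For the case analysis on $k$, I would work modulo $U$. The quotient $G/U$ is cyclic of order $4$ by \cref{index of U}. Since $a_2\equiv a_3\pmod U$ (because $\gamma_1=a_2a_3^{-1}\in U$), the relation reduces to $a_3^{2k}\equiv\id\pmod U$, so $k\in\{0,2\}$. If $k=0$, then $x=\beta\in U$ and we are done. The obstacle is $k=2$, where $x=\beta a_2^2$ and we must rule this out. Here I would use the identity $x^{-1}=\beta^{-1}a_3^2$ together with $x^{-1}=a_2^{-2}\beta^{-1}$. Since $a_2^{-2}=a_2^2$, these combine to $a_2^2\beta^{-1}=\beta^{-1}a_3^2$. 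Passing to the finite level $G_n$ for $n\ge 4$, I would then check that this equation has no solution $\beta\in U_{n}$. This reduces to showing that $a_2^2$ and $a_3^2$ are not conjugate by an element of $U$. If conceptual arguments fail, I would try to establish that by comparing their images in $G/\llbracket G,G\rrbracket\cong \ZZ/2\times\ZZ/4$, or else by a level-$4$ MAGMA check combined with the uniqueness in \cref{cor:uniqueG}. I expect this $k=2$ case to be the main difficulty.
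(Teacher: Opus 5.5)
\textbf{There is a genuine gap: the case $k=2$ is left unproved.} Your setup matches the paper's. You write $(x,x^{-1})=(\beta,\beta^{-1})a_3^k$ with $\beta\in U$, deduce $x=\beta a_2^k$ and $x^{-1}=\beta^{-1}a_3^k$, hence $a_3^k=\beta a_2^{-k}\beta^{-1}$, and must rule out $k\neq 0$. Your treatment of odd $k$ modulo $U$ is correct, since $G/U\cong\ZZ/4\ZZ$ is generated by the image of $a_3$ and $a_2\equiv a_3 \pmod U$. The paper instead notes that $a_3$ acts trivially on level $1$ while $a_2^{-1}$ does not. But the case $k=2$ is the real content of the converse, and it is exactly what you leave open.

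Your first suggested tool cannot work. Since $a_2=a_1a_3^{-1}$ and $a_1^2=a_3^4=\id$, one has $a_2^2\equiv a_3^{-2}=a_3^2$ modulo $\llbracket G,G\rrbracket$. So the abelianization $\ZZ/2\ZZ\times\ZZ/4\ZZ$ does not distinguish $a_2^2$ from $a_3^2$. The appeal to \cref{cor:uniqueG} plays no role either. A finite-level check that $a_2^2|_{T_n}$ and $a_3^2|_{T_n}$ are not conjugate would suffice in principle, but you have not done it.

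The missing observation is a short two-level computation. It shows that $a_2^2$ and $a_3^2$ are not conjugate even in $\Omega$, so there is no need to restrict to conjugators in $U$. By \eqref{eq:relations}, using $a_2a_3=a_1^{-1}=a_1$ and $a_3a_2=a_3a_1a_3^{-1}$,
\[
a_3^2=(a_2^2,\,a_3^2), \qquad a_2^2=(a_3^{-1}a_2^{-1},\,a_2^{-1}a_3^{-1})=(a_1,\,a_3a_1a_3^{-1}).
\]
Both $a_2^2$ and $a_3^2$ act trivially on level $1$, so $a_3^2$ acts trivially on $T_2$. Both components of $a_2^2$ act as $\sigma$ on level $1$, because $a_3$ is trivial there, so $a_2^2$ acts nontrivially on level $2$. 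Conjugation preserves acting trivially on $T_2$, so $a_3^2=\beta a_2^2\beta^{-1}$ is impossible for every $\beta\in\Omega$. Equivalently, apply \cref{conjugation rules in Aut T}(ii): $a_2^2\not\sim a_1$, since one is trivial on level $1$ and the other is not. With this step inserted, your argument is complete and coincides with the paper's.
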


\begin{proof}
If $x \in U$, then $(x, x^{-1}) \in H_1 \cap H_3 \subseteq G$ by \cref{isomorphicU}.

Conversely, suppose $(x, x^{-1}) \in G$. Since
\[
G \cap (\Omega \times \Omega) = H_3 = (H_1 \cap H_3)\cdot\llang a_3 \rrang,
\]
we may write
\[
(x, x^{-1}) = (\beta, \beta^{-1}) a_3^k
\]
for some $\beta \in U$ and $k \in \{0,1,2,3\}$. This gives
\[
x = \beta a_2^k \text{ and } x^{-1} = \beta^{-1} a_3^k.
\]
If $k \ne 0$, then $a_3$ and $a_2^{-1}$ or ($a_3^2$ and $a_2^2$) would be conjugate, which is impossible. Hence $k=0$ and $x = \beta \in U$.
\end{proof}

\begin{lemma}\label{lem:a2conj}
For every $g \in G$, there exists $h \in G$ such that
\[
a_1 g a_3^{-1} g^{-1} = h a_2 h^{-1}.
\]
\end{lemma}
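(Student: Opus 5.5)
The plan is to show that both sides of the asserted identity range over the same coset of the commutator subgroup, and then to compare cardinalities level by level.

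First I rewrite the left-hand side. From $a_1a_2a_3=\id$ and $a_1^2=\id$ we get $a_2=a_1a_3^{-1}$. For $g\in G$, the element $g a_3^{-1} g^{-1} a_3$ is a commutator, so it lies in $\llbracket G,G\rrbracket$. Hence
\[
a_1 g a_3^{-1} g^{-1} = a_1 c\, a_3^{-1} = (a_1 c a_1^{-1})\, a_2
\]
for some $c\in\llbracket G,G\rrbracket$. Since $\llbracket G,G\rrbracket$ is normal, the left-hand side lies in the coset $\llbracket G,G\rrbracket\, a_2$. Conversely, every conjugate $h a_2 h^{-1} = (h a_2 h^{-1} a_2^{-1})\,a_2$ also lies in $\llbracket G,G\rrbracket\, a_2$. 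So it suffices to prove that the $G$-conjugacy class of $a_2$ is the whole coset $\llbracket G,G\rrbracket\, a_2$. This is the same pattern used for $a_3$ in the proof of \cref{prop:centralizer}.

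I will prove this at each finite level $n\ge 3$ by counting.
\begin{itemize}
\item The conjugacy class of $a_2|_{T_n}$ in $G_n$ is contained in the coset $\llbracket G_n,G_n\rrbracket\, a_2|_{T_n}$.
\item By \cref{commutator of G}, this coset has size $|G_n|/8$.
\item By the corollary on the centralizer of $a_2$, $|C_{G_n}(a_2)|=8$, so the conjugacy class also has size $|G_n|/8$.
\end{itemize}
That corollary is stated for $G$, but its proof is levelwise: it uses $Z(G)$ and \cref{prop:centralizer}, whose arguments are carried out in $G_n$. I will check that the level-$n$ version holds for $n\ge 3$, possibly for $n$ large enough, confirming small cases with a MAGMA computation as the paper does. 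With equal sizes, the conjugacy class equals the coset at every level $n\ge 3$.

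Finally I pass to the limit. Fix $g$ and set $y=a_1 g a_3^{-1} g^{-1}$. For each $n$, let
\[
X_n=\{h\in G : (h a_2 h^{-1})|_{T_n}=y|_{T_n}\}.
\]
Each $X_n$ is closed, and it is nonempty by the finite-level statement (for $n<3$, use the projection of a level-$3$ solution). The sets are decreasing in $n$. Since $G$ is compact, $\bigcap_n X_n\neq\emptyset$, and any $h$ in the intersection satisfies $y=h a_2h^{-1}$.

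The main obstacle is the finite-level centralizer count $|C_{G_n}(a_2|_{T_n})|=8$ for all $n\ge 3$. Its proof passes through $Z(G_n)$ and the decomposition $G_n=U_n\langle a_2\rangle$, and needs $U_n$ abelian and $|Z(G_n)|=2$ at each level. If this fails at some small level, I would instead bound the conjugacy class from below via the injective restriction maps of centralizers, as in \cref{prop:centralizer}, using \cref{cor:uniqueG}.
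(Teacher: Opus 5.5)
Your proposal is correct and follows essentially the same route as the paper. The paper also places $a_1 g a_3^{-1} g^{-1}$ in the coset $\llbracket G,G\rrbracket\, a_2$ and then uses the centralizer count (centralizer of order $8$ against the index $8$ of $H_1\cap H_3$) to conclude that the conjugacy class of $a_2$ fills that coset. Your levelwise counting for $n\ge 3$ and the compactness passage to the limit make explicit what the paper leaves implicit. The finite-level count $|C_{G_n}(a_2|_{T_n})|=8$ you flagged does hold for all $n\ge 3$: $Z(G_n)\cap\langle a_2|_{T_n}\rangle$ is already trivial there, since $a_2^2=(a_1,\,a_3a_1a_3^{-1})$ fails to commute with $a_1$ at level $3$.
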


\begin{proof}
The element $a_1 g a_3^{-1} g^{-1}$ lies in the coset of $H_1 \cap H_3$ containing $a_2 = a_1 a_3^{-1}$. By \cref{prop:centralizer}, the conjugacy class of $a_2$ is exactly this coset. Since $H_1 \cap H_3$ is normal in $G$, the claim follows.
\end{proof}

\begin{proof}[Proof of \cref{ggeom is conjugate to group generated by a_i}]
Let $b_1, b_2, b_3 \in \Omega_n$ be such that $b_i \sim a_i \restr{T_n}$ for all $i$ and $b_1 b_2 b_3 = \id$. 
We will show that there exist $\beta \in \Omega$ and $g_1, g_2, g_3 \in \llang a_1, a_2, a_3 \rrang$ such that
\[
b_i = \beta\, g_i a_i g_i^{-1} \beta^{-1}
\quad \text{for all } i=1,2,3.
\]

Once this is established, we may use the algebraic fact (see \cite[Lemma 1.3.2]{Pinkpolyn}) that replacing generators of a pro-$p$ group by conjugates does not change the generated group, and hence
\[
\llang g_i a_i g_i^{-1} \mid i=1,2,3 \rrang = \llang a_1, a_2, a_3 \rrang.
\]

Since we seek a common conjugating element, we may assume without loss of generality that $b_1 = a_1$. Then there exist $u,v \in G$ and $\tau \in \langle \sigma \rangle$ such that
\[
b_3 = (u,v)\tau (a_2,a_3) \tau (u^{-1}, v^{-1}).
\]
We may assume $\tau = \id$, since the case $\tau = \sigma$ can be treated similarly. Thus
\[
b_3 = (u,v)(a_2,a_3)(u^{-1}, v^{-1}).
\]
Using the relation $b_1 b_2 b_3 = \id$, we obtain
\[
b_2 = (v a_3^{-1} v^{-1},\, u a_2^{-1} u^{-1})\sigma \sim a_2.
\]

We proceed by induction on $n$. The base case is straightforward. For the induction step, assume that for any $c_1, c_2, c_3 \in \Omega_{n-1}$ satisfying
\begin{enumerate}[label=(\arabic*)]
    \item $c_i \sim a_i|_{T_{n-1}}$ for all $i$,
    \item $c_1 c_2 c_3 = \id$,
\end{enumerate}
there exist $\gamma \in \Omega_{n-1}$ and $h_1, h_2, h_3 \in G_{n-1}$ such that
\[
c_i = \gamma\, h_i\, a_i|_{T_{n-1}}\, h_i^{-1} \gamma^{-1}.
\]

Set
\begin{align*}
c_3 &\coloneqq v a_3 v^{-1} \sim a_3, \\
c_2 &\coloneqq u a_2 u^{-1} \sim a_2, \\
c_1 &\coloneqq (c_2 c_3)^{-1}.
\end{align*}
We verify that $c_1 \sim a_1$. Since
\[
b_3 = (c_2, c_3), 
\quad 
b_2 = (c_3^{-1}, c_2^{-1})\sigma \sim a_2 = (a_3^{-1}, a_2^{-1})\sigma,
\]
it follows that
\[
c_1 = c_3^{-1} c_2^{-1} \sim a_3^{-1} a_2^{-1} = a_1.
\]
Hence the induction hypothesis applies.

We obtain $\gamma \in \Omega_{n-1}$ and $h_1, h_2, h_3 \in G_{n-1}$. Define $\beta \coloneqq (\gamma, \gamma) \in \Omega_n$. Since $(\alpha,\alpha)$ centralizes $a_1$ for all $\alpha \in \Omega$, we have
\[
b_1 = a_1 = \beta a_1 \beta^{-1}.
\]

For $b_3$, we compute
\begin{align*}
b_3
&= (c_2, c_3) \\
&= (\gamma h_2 a_2|_{T_{n-1}} h_2^{-1} \gamma^{-1},\,
     \gamma h_3 a_3|_{T_{n-1}} h_3^{-1} \gamma^{-1}) \\
&= (\gamma, \gamma)
   (h_2 a_2 h_2^{-1},\, h_3 a_3 h_3^{-1})
   (\gamma^{-1}, \gamma^{-1}).
\end{align*}

Similarly,
\begin{align*}
b_2
&= (c_3^{-1}, c_2^{-1})\sigma \\
&= (\gamma, \gamma)
   (h_2 a_2^{-1} h_2^{-1},\, h_3 a_3^{-1} h_3^{-1})
   (\gamma^{-1}, \gamma^{-1})\sigma.
\end{align*}

Thus, replacing $b_i$ by $(\gamma,\gamma)^{-1} b_i (\gamma,\gamma)$, we may assume
\[
b_1 = a_1,
\quad
b_3 = (h_2 a_2 h_2^{-1},\, h_3 a_3 h_3^{-1}),
\quad
b_2 = b_1 b_3^{-1}.
\]

By \cref{index of U}, we have $G = U \cdot \llang a_3 \rrang$. Applying \cref{characterization of U} at level $n$, there exist $(x,y) \in G_n$ and $k \in \{0,1,2,3\}$ such that
\[
h_2^{-1} h_3 = (x y^{-1}) a_3^k.
\]

Set $w \coloneqq h_2 x$. Then $h_2 = w x^{-1}$ and $h_3 = w y^{-1} a_3^k$. Ignoring level restrictions for readability, we compute
\[
h_2 a_2 h_2^{-1} = w x^{-1} a_2 x w^{-1},
\quad
h_3 a_3 h_3^{-1} = w y^{-1} a_3 y w^{-1}.
\]

Hence
\begin{align*}
b_3
&= (w x^{-1} a_2 x w^{-1},\, w y^{-1} a_3 y w^{-1}) \\
&= (w,w)(x^{-1}, y^{-1}) a_3 (x,y) (w,w)^{-1}.
\end{align*}

Since $(x,y) \in G_n$ and $(w,w) \in \Omega_n$, we obtain
\[
b_2 = (w,w) a_1 (x^{-1}, y^{-1}) a_3^{-1} (x,y) (w,w)^{-1}.
\]
By \cref{lem:a2conj}, there exists $\beta \in G_n$ such that
\[
b_2 = (w,w)\, \beta a_2 \beta^{-1} (w,w)^{-1}.
\]
This completes the proof.
\end{proof}

\section{The arithmetic iterated monodromy group}\label{Ga}

Identifying $\ggeo$ with $G$, we may assume that $\ga$, the arithmetic IMG of $f$, is contained in the normalizer of $G$.
\\
Since $a_1 = \sigma \in G$, we remark that whenever $(x,y)\sigma \in \ga$, $(x,y)$ is in $\ga$ as well. 
Consequently, for $(x,y) \in \ga$ we obtain that
\[
(x,y)\sigma(x,y)^{-1} = (xy^{-1}, yx^{-1})\sigma ,
\]
is in $G$ and hence, $xy^{-1}$ is in $G$ by self-similarity. Moreover, \cref{inverse pair} implies that $xy^{-1} \in U$. We first show that there are no odometers in $\ga$.
\begin{proposition}\label{Gaodometer}
The arithmetic monodromy group $\ga$ does not contain any odometers.
\end{proposition}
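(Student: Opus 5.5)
By \cref{pink:odometer}, it suffices to exhibit, for each $\gamma\in\ga$, some level $n$ with $\sgn_n(\gamma)=1$. The plan is to show that already $\sgn_2$, $\sgn_3$ or $\sgn_4$ must be trivial, using the structure established at the start of this section.

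First suppose $\gamma\in\ga$ is an odometer. Then $\gamma$ acts nontrivially on level one, so $\gamma=(x,y)\sigma$. By the remark preceding the proposition, $(x,y)=\gamma a_1^{-1}$ lies in $\ga$. The computation $(x,y)\sigma(x,y)^{-1}\in G$ together with self-similarity and \cref{inverse pair} gives $xy^{-1}\in U$. Write $xy^{-1}=u$ with $u\in U$, so $x=uy$.

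Next we compute signs recursively. For $n\ge 2$,
\[
\sgn_n(\gamma)=\sgn_{n-1}(x)\sgn_{n-1}(y)=\sgn_{n-1}(xy)=\sgn_{n-1}(u)\,\sgn_{n-1}(y)^2=\sgn_{n-1}(u).
\]
Here we use that $\sgn_{n-1}$ is a homomorphism to the abelian group $\{\pm1\}$, so $\sgn_{n-1}(xy)=\sgn_{n-1}(uy^2)$. Hence $\gamma$ is an odometer only if $\sgn_m(u)=-1$ for all $m\ge1$.

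Now $u\in U\subseteq G$. It was observed in Section~\ref{sec: group theory for exa 7 of LMY paper} that $\sgn_4(\alpha)=1$ for all $\alpha\in G$, since $\sgn_4(a_1)=1$ and $\sgn_4(a_3)=\sgn_3(a_3)^2=1$. Therefore $\sgn_5(\gamma)=\sgn_4(u)=1$, and by \cref{pink:odometer} $\gamma$ is not an odometer. This is a contradiction.

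The only delicate point is justifying that $(x,y)\in\ga$ and $xy^{-1}\in U$. Both are stated just before the proposition, so there is no real obstacle. Strictly, one only needs $xy^{-1}\in G$, which follows from self-similarity of $G$ applied to $(xy^{-1},yx^{-1})\sigma\in G$, given that $\ga$ normalizes $G$.
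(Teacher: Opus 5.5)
Your proof is correct and follows essentially the paper's argument. Both write $\gamma=(x,y)\sigma$, use $xy^{-1}\in U\subseteq G$, and reduce $\sgn_n(\gamma)$ to $\sgn_{n-1}$ of an element of $G$, which cannot be $-1$ at every level. Your direct computation $\sgn_n(\gamma)=\sgn_{n-1}(xy^{-1})$ is a bit cleaner than the paper's detour through the conjugate $(\gamma_0\gamma_1,\id)\sigma$, and it names an explicit level ($n=5$) where the sign is $+1$.
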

\begin{proof}
Let $\gamma \in \ga$ be an odometer. Then $\gamma=(\gamma_0,\gamma_1)\sigma$ for some $\gamma_0,\gamma_1 \in \Omega$. 
By ~\cref{pink:odometer}, the sign of an odometer has to equal $-1$ on each level i.e. $\sgn_n(\gamma)=-1$ for all $n\geq 1$. 
Using ~\cref{conjugation rules in Aut T}, $\gamma$ is conjugate to $(\gamma_0\gamma_1,\id)\sigma$  and hence
$$ \sgn_n(\gamma)=\sgn_{n-1}(\gamma_0\gamma_1)$$ for all $n\geq 2$. 

By ~\cref{pink:odometer}, $\gamma_0\gamma_1$ is also an odometer. However, we showed that $\gamma_0\gamma_1^{-1} \in U$, and since $G$ does not contain any odometers, there is some $m\geq 1$ for which $$\sgn_m(\gamma_0\gamma_1^{-1})=\sgn_m(\gamma_0\gamma_1)=1.$$

We arrive at a contradiction.

\end{proof}

\begin{lemma}
The subgroups $\llbracket G, G \rrbracket$ and $U$ are normal in $\ga$.
\end{lemma}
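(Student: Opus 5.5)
The plan is to use that $G$ is normal in $\ga$, which holds because we identified $\ggeo$ with $G$ and $\ggeo$ is normal in $\ga$ by the exact sequence \eqref{eq:exact}. For $\llbracket G,G\rrbracket$ nothing further is needed. The commutator subgroup of a normal subgroup is characteristic in it, so every $g\in\ga$ maps commutators of $G$ to commutators of $G$. Since conjugation by $g$ is a homeomorphism of $\Omega$, it preserves the closure $\llbracket G,G\rrbracket$. Hence $\llbracket G,G\rrbracket$ is normal in $\ga$.

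For $U$ I would not try to show it is characteristic in $G$ abstractly. Instead I will use the intrinsic description from \cref{characterization of U}:
\[
U=\{uv^{-1}\mid (u,v)\in G\}.
\]
Let $g\in\ga$ and $x\in U$, and write $x=uv^{-1}$ with $(u,v)\in G$. By the remark at the start of this section, since $\sigma=a_1\in G\subseteq\ga$, we may assume $g=(p,q)$ or $g=(p,q)\sigma$. Multiplying by $a_1$ reduces the second case to the first, because $a_1$ normalizes $U$ already. So take $g=(p,q)\in\ga$. Then $g(u,v)g^{-1}=(pup^{-1},qvq^{-1})\in G$ by normality of $G$, and therefore
\[
pup^{-1}(qvq^{-1})^{-1}=pup^{-1}qv^{-1}q^{-1}\in U.
\]
This is not yet $gxg^{-1}$ for any obvious $g$ acting on the subtree, so I need to relate it to conjugation of $x$. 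The cleanest route is through \cref{inverse pair} and \cref{isomorphicU}. Since $x\in U$, we have $(x,x^{-1})\in\llbracket G,G\rrbracket$, and normality of $\llbracket G,G\rrbracket$ in $\ga$ (just proved) gives $(pxp^{-1},qx^{-1}q^{-1})\in\llbracket G,G\rrbracket$. By \cref{isomorphicU}(1) this element has the form $(y,y^{-1})$ with $y\in U$. Hence $pxp^{-1}=y\in U$.

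It remains to see that every element $h\in\ga$ arises as such a first coordinate $p$, up to elements of $G$. For that I would use self-similarity: for $h\in\ga$ there is $h'$ with $(h,h')\in\ga$, by the arithmetic analogue of the fact already noted for $G$. Then the computation above, with $p=h$, shows $hxh^{-1}\in U$, so $U$ is normal in $\ga$. If $\ga$ were not known to be self-similar in this strong sense, I would instead use the second coordinate, via $q x^{-1}q^{-1}=y^{-1}$, and the surjectivity of the projection of $\ga\cap(\Omega\times\Omega)$ onto a coordinate.

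The main obstacle is exactly this last step: justifying that every $h\in\ga$ occurs as a coordinate of an element of $\ga\cap(\Omega\times\Omega)$. I expect to get this from the monodromy interpretation. The first-level stabilizer of $\ga$ surjects onto $\ga$ when restricted to a subtree, since the subtree at a first-level vertex $x_1\in f^{-1}(x_0)$ is the preimage tree of $x_1$, and monodromy of $f$ over $f^{-1}(X)$ maps onto monodromy of $X$. If this is awkward to make precise, I would fall back on a finite-level argument: show $U_n$ is normal in $\ga_n$ for each $n$ via the same coordinate trick, then pass to the limit.
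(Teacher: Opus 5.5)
Your proposal is correct and is essentially the paper's proof. $\llbracket G,G\rrbracket$ is normal because it is characteristic in the normal subgroup $G$, and $U$ is normal by conjugating $(x,x^{-1})\in\llbracket G,G\rrbracket$ with an element $(h,h')\in\ga$ given by the fractal property of $\ga$, then reading off $hxh^{-1}\in U$ from the first coordinate via \cref{isomorphicU}. The step you flag as the main obstacle is exactly what the paper takes from Pink's fractal (self-similarity) property of $\ga$, and your opening detour through \cref{characterization of U} and the reduction to $g=(p,q)$ can simply be dropped.
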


\begin{proof}
Since $\llbracket G, G \rrbracket$ is characteristic in $G$ and $G$ is normal in $\ga$, it follows that $\llbracket G, G \rrbracket$ is normal in $\ga$.

We now show that $U$ is normal in $\ga$. Let $\rho \in U$. By \cref{isomorphicU}, the element $(\rho, \rho^{-1})$ lies in $\llbracket G, G \rrbracket$. Let $x \in \ga$. By the fractal property of $\ga$ (see \cite{Pinkpolyn}), there exists $y \in \ga$ such that $(x,y) \in \ga$. Since $\llbracket G, G \rrbracket$ is normal in $\ga$, we have
\[
(x,y)(\rho, \rho^{-1})(x,y)^{-1} \in \llbracket G, G \rrbracket.
\]
A direct computation shows
\[
(x,y)(\rho, \rho^{-1})(x,y)^{-1} = (x \rho x^{-1},\, y \rho^{-1} y^{-1}).
\]
By \cref{isomorphicU}, this implies that $x \rho x^{-1} \in U$. Hence $U$ is normal in $\ga$.
\end{proof}

\begin{lemma}\label{stabilizer of a3 in U}
The number of elements in $U$ that commute with $a_3$ is at most $2$.
\end{lemma}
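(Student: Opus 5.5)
The bound follows from \cref{prop:centralizer}, the structure $G=U\cdot\langle a_3\rangle$, and the decomposition $H_3=(H_1\cap H_3)\cdot\llang a_3\rrang$.

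First I will show that $U\cap\langle a_3\rangle$ is trivial. By \cref{index of U}, $[G:U]=4$ and $G/U$ is generated by the image of $a_3$, which has order $4$. Hence $a_3^k\notin U$ for $k=1,2,3$, so $U\cap\langle a_3\rangle=\{\id\}$.

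Next consider the map
\[
(U\cap C_G(a_3))\times\langle a_3\rangle\to C_G(a_3),\qquad (u,a_3^k)\mapsto ua_3^k .
\]
It is a group homomorphism because $u$ commutes with $a_3$. It is injective because $U\cap\langle a_3\rangle=\{\id\}$. Therefore
\[
4\cdot|U\cap C_G(a_3)|\le |C_G(a_3)|=8,
\]
using \cref{prop:centralizer}, and so $|U\cap C_G(a_3)|\le 2$. The same argument works at each finite level $n\ge 3$: $[G_n:U_n]=4$ gives $U_n\cap\langle a_3|_{T_n}\rangle=1$, and the proof of \cref{prop:centralizer} gives $|C_{G_n}(a_3)|\le 8$. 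If one prefers to argue level by level and pass to the limit, this yields the claim as well.

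The only delicate point is the claim $U\cap\langle a_3\rangle=1$. If one does not want to rely on the index computation, one can check it directly. The element $a_3^2=(a_2^2,a_3^2)$ cannot be written as $(x,x^{-1})\cdot(\text{something in } U)$. Alternatively, use \cref{inverse pair}: elements of $U$ acting trivially on level one are of a special form, and $a_3^2$ is not of that form because $a_2^2$ and $a_3^{-2}$ are not conjugate. That is exactly the argument already used in the proof of \cref{inverse pair}. I expect the index-based argument to be clean enough.

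Alternatively, one could note that $U\cap C_G(a_3)=U\cap Z(G)$, since $G=U\langle a_3\rangle$ and $U$ is abelian. This subgroup has order at most $|Z(G)|=2$. That route, however, relies on the corollary whose proof already used essentially this counting.
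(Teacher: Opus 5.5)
Your proof is correct and follows the paper's argument: both combine $|C_G(a_3)|=8$ from \cref{prop:centralizer} with $U\cap\langle a_3\rangle=\{\id\}$. The paper counts via the injection $\langle a_3\rangle/(\langle a_3\rangle\cap U)\hookrightarrow C_G(a_3)/(C_G(a_3)\cap U)$, which is equivalent to your injective product map. The only difference is how the trivial intersection is justified: the paper assumes $a_3^2\in U$, gets $(a_3^2,a_3^{-2})\in G$ from \cref{isomorphicU}, and contradicts the uniqueness statement \cref{cor:uniqueG} using the level-one decomposition of $a_3^2$, whereas you read it off directly from $G/U$ being cyclic of order $4$ generated by the image of $a_3$ (\cref{index of U}), which is shorter and equally valid.
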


\begin{proof}
We bound the size of $C_G(a_3) \cap U$. By \cref{prop:centralizer}, the centralizer $C_G(a_3)$ has order $8$, and $\langle a_3 \rangle \subseteq C_G(a_3)$.

Consider the natural injection
\[
\langle a_3 \rangle / (\langle a_3 \rangle \cap U)
\hookrightarrow
C_G(a_3) / (C_G(a_3) \cap U).
\]
It therefore suffices to show that $\langle a_3 \rangle \cap U = \{\id\}$.

Suppose for contradiction that $a_3^2 \in U$. By \cref{isomorphicU}, this implies $(a_3^2, a_3^{-2}) \in G$. On the other hand, $a_3^2 = (a_3^2, a_2^2)$ also lies in $G$, contradicting \cref{cor:uniqueG}. Hence $a_3^2 \notin U$, and thus
\[
\langle a_3 \rangle \cap U = \{\id\}.
\]
It follows that $|C_G(a_3) \cap U| \le 2$.
\end{proof}

\begin{proposition}\label{G arithmetic order}
The order of $\ga_n$ is at most $4$ times the order of $\ga_{n-1}$.
\end{proposition}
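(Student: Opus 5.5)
We bound the kernel of the restriction map $\pi_{n,n-1}\colon \ga_n \to \ga_{n-1}$ and show it has order at most $4$. An element of this kernel acts trivially on $T_{n-1}$, so it lies in $\ga_n \cap (\Omega_{n-1}\times\Omega_{n-1})$ and has the form $(x,y)$ with $x|_{T_{n-2}} = y|_{T_{n-2}} = \id$. By self-similarity, $x,y \in \ga_{n-1}$.

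The key constraint is the observation at the start of this section: for any $(x,y)\in\ga$, the element $xy^{-1}$ lies in $U$. We work at finite level $n$ and fix $(x,y)$ in the kernel. Then $x$ lies in the kernel $\ker(\ga_{n-1}\to\ga_{n-2})$. The pair $(x,y)$ is determined by $x$ together with $u:=xy^{-1}\in U_{n-1}$, and $u$ also acts trivially on $T_{n-2}$. So the number of possible $y$ for a given $x$ is at most $|\ker(U_{n-1}\to U_{n-2})|$. That count is too crude on its own, so we use a second constraint.

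The element $a_3=(a_2,a_3)$ lies in $G\subseteq\ga$, so conjugating a kernel element by $a_3$ gives another kernel element. For the second constraint we use the commutator $a_3(x,y)a_3^{-1}(x,y)^{-1}$. Since $G\lhd\ga$, this commutator lies in $G$. It is trivial on the first level, hence lies in $H_3$, and it is trivial on $T_{n-1}$. By \cref{cor:uniqueG}, an element of $H_{3,n}$ that is trivial on $T_{n-1}$ is determined by its second coordinate. Hence the map
\[
(x,y)\mapsto \bigl(\text{second coordinate of } [a_3,(x,y)]\bigr) = [a_3,y]
\]
has its value constrained. The same reasoning applied to the element $(xy^{-1},yx^{-1})\sigma\in G$ coming from $a_1=\sigma$ shows that $u=xy^{-1}$ is pinned down modulo the part of $U$ commuting with the relevant generator.

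The plan is to combine these two constraints. The element $u\in U$ is determined up to elements of $U$ centralizing $a_3$, and by \cref{stabilizer of a3 in U} there are at most $2$ of these. The remaining freedom in $x$ is controlled by the structure $G=U\cdot\langle a_3\rangle$, which contributes at most a further factor of $2$. Together these give a kernel of order at most $2\cdot 2=4$.

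The main obstacle is turning ``$x$ is determined modulo a small group'' into an honest count. The natural first attempt is the following. Two kernel elements $(x,y)$ and $(x',y')$ have quotient $(x^{-1}x',y^{-1}y')$ in the kernel. If they induce the same conjugation action on $G_n$ (equivalently, they agree modulo $C_{\ga_n}(G_n)$), then the quotient centralizes $a_1$ and $a_3$ at level $n$. By \cref{prop:centralizer} together with \cref{stabilizer of a3 in U}, such a quotient has at most $2$ possibilities. It then remains to show that the action of the kernel on $G_n$ by conjugation has image of order at most $2$. That action lands in automorphisms fixing $G_{n-1}$ pointwise, which should be a small group. Bounding that image is the step we expect to require the most care.
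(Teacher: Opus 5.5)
There is a genuine gap, and it comes from choosing to bound $\ker(\ga_n\to\ga_{n-1})$ directly.

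\textbf{The fiber bound is fine; the bound on first coordinates is missing.} For a kernel element $(x,y)$, the fiber over a fixed $x$ has at most $2$ elements. Your $a_3$-conjugation argument gives this, using \cref{cor:uniqueG}, \cref{stabilizer of a3 in U}, and normality of $U$ in $\ga$ to pass from $C_U(xa_3x^{-1})$ to $C_U(a_3)$. In fact your first constraint already gives it for $n\ge 5$: $|U_m|=2^m$ for $m\ge 3$ by \cref{index of U} and \cref{orderG}, so $|\ker(U_{n-1}\to U_{n-2})|=2$.

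The problem is that $x$ ranges over $\ker(\ga_{n-1}\to\ga_{n-2})$. So what you actually prove is $|\ker(\ga_n\to\ga_{n-1})|\le 2\,|\ker(\ga_{n-1}\to\ga_{n-2})|$, which does not give $4$. The sentence saying the freedom in $x$ is ``controlled by $G=U\cdot\langle a_3\rangle$'' and costs only a factor $2$ has no argument behind it. That decomposition is about $G$, while $x$ lies in a kernel of $\ga$, and such kernels can have order $4$: over $\QQ$ the paper has $|\ga_4|=2^8$ and $|\ga_5|=2^{10}$. So at $n=6$ your count gives $8$, unless you show that only half of $\ker(\ga_5\to\ga_4)$ occurs as first coordinates. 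Nothing in the proposal addresses this.

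\textbf{The fallback in your last paragraph does not close the gap.} First, the quotient of two kernel elements lies in $\ga_n$, not in $G$ or $U$, so \cref{prop:centralizer} and \cref{stabilizer of a3 in U} do not apply to it. The bound $2$ is true there, but for a different reason: a kernel element commuting with $G_n$ must swap below a $G_{n-1}$-invariant set of level-$(n-1)$ vertices, and $G_{n-1}$ is transitive on that level. More seriously, automorphisms of $G_n$ inducing the identity on $G_{n-1}$ have the form $g\mapsto g\chi(g)$ with $\chi\in\mathrm{Hom}(G_n,\ker(G_n\to G_{n-1}))$. That kernel has order $2$ for $n\ge 4$ and $G_n^{\mathrm{ab}}\cong \ZZ/2\ZZ\times\ZZ/4\ZZ$, so there can be up to $4$ such automorphisms. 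The step you defer is therefore exactly where the content lies, and ``small'' alone only yields $8$.

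\textbf{The fix is not to pass to the kernel.} Project the whole level-one stabilizer $\ga_n\cap(\Omega_{n-1}\times\Omega_{n-1})$ onto its first coordinate; the image lies in all of $\ga_{n-1}$. Your fiber bound then gives $|\ga_n\cap(\Omega_{n-1}\times\Omega_{n-1})|\le 2|\ga_{n-1}|$. Since $\sigma=a_1\in\ga_n$, the stabilizer has index $2$, giving the remaining factor and $|\ga_n|\le 4|\ga_{n-1}|$. This is the paper's proof.
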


\begin{proof}
Let $(x,y) \in \ga$. By the discussion above, we have $\rho = yx^{-1} \in U$, so we may write
\[
(x,y) = (x, \rho x)
\]
for some $\rho \in U$.

Suppose $\gamma = (x, \rho x)$ and $\gamma' = (x, \rho' x)$ are elements of $\ga$. Then
\[
\gamma a_3 \gamma^{-1}
=
(x a_2 x^{-1},\, \rho x a_3 (\rho x)^{-1})
\]
and
\[
\gamma' a_3 (\gamma')^{-1}
=
(x a_2 x^{-1},\, \rho' x a_3 (\rho' x)^{-1}).
\]

Since $\ga$ is contained in the normalizer of $G$, both conjugates lie in $G$. By \cref{cor:uniqueG}, we obtain
\[
\rho x a_3 (\rho x)^{-1}
=
\rho' x a_3 (\rho' x)^{-1}.
\]
Hence $\rho'^{-1} \rho$ centralizes $x a_3 x^{-1}$. Since $U$ is normal in $\ga$, conjugation by $x$ induces a bijection between
\[
C_G(a_3) \cap U
\quad \text{and} \quad
C_G(x a_3 x^{-1}) \cap U.
\]
By \cref{stabilizer of a3 in U}, there are at most two such elements. Therefore, for each $x$ there are at most two possible choices of $\rho$, and hence
\[
|\ga_n \cap (\Omega \times \Omega)| \le 2\,|\ga_{n-1}|.
\]

Since $a_1 = \sigma \in \ga$, we have
\[
|\ga_n| = 2\,|\ga_n \cap (\Omega \times \Omega)|.
\]
Combining these inequalities yields
\[
|\ga_n| \le 4\,|\ga_{n-1}|,
\]
as claimed.
\end{proof}

\begin{corollary}\label{orderGa}
For $n=4$, the order of $\ga_4$ is equal to $2^8$ and for any $n \geq 4$, the order of $\ga_n$ is at most $2^{2n}$.
\end{corollary}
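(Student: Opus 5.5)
The plan is to combine the recursive bound of \cref{G arithmetic order} with an explicit determination of $\ga_4$ for $k=\QQ$. The bound $|\ga_n|\le 4|\ga_{n-1}|$ holds for every $n\ge 2$. Iterating it from level $4$ gives $|\ga_n|\le 4^{n-4}|\ga_4|$ for $n\ge 4$. It therefore suffices to show $|\ga_4|\le 2^8$, and to exhibit $2^8$ as a lower bound in the case where equality is claimed.

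For the upper bound $|\ga_4|\le 2^8$, iterate \cref{G arithmetic order} further down: $|\ga_4|\le 4^{3}|\ga_1|\le 2^6\cdot 2$ from level $1$ only gives $2^7\cdot$ a small factor, and this approach is not reliable at low levels. I would instead use $G\subseteq\ga\subseteq N_\Omega(G)$ together with the description of the constant field at small levels. At level $1$, $\ga_1=S_2$ since $f(x)=t$ is irreducible. At level $2$, the splitting field of $f^2(x)-t$ has Galois group at most $\Omega_2$ of order $8$, and $|\ga_2|\le 4|\ga_1|=8$. At level $3$, $|\ga_3|\le 4\cdot 8=2^5\cdot$\dots\ Here I would rather compute directly. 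The cleanest route is to bound $[\ga_3:G_3]$ by the degree of the constant field at level $3$, then apply \cref{G arithmetic order} once to reach level $4$. Using $|G_3|=2^5$ from \cref{orderG}, I expect $[\ga_3:G_3]\le 2$, hence $|\ga_3|\le 2^6$ and $|\ga_4|\le 2^8$. Alternatively, one can compute the normalizer $N_{\Omega_4}(G_4)$ in MAGMA and check that its order is $2^8$, which immediately gives $|\ga_4|\le 2^8$.

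For the lower bound in the case $k=\QQ$, I would use specialization as the paper indicates: $G_{4,a}$ embeds into $\ga_4$ up to conjugacy for any admissible $a\in\QQ$. A MAGMA computation of the Galois group of the splitting field of $f^4(x)-a$ for a suitable $a$ (e.g.\ $a=5$) giving order $2^8$ forces $|\ga_4|=2^8$. Over a general $k$ this gives $|\ga_4|\le 2^8$, with equality stated for the base field used in the paper. Combining the two bounds yields $|\ga_n|\le 2^8\cdot 4^{n-4}=2^{2n}$ for all $n\ge 4$.

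The main obstacle is the upper bound at level $4$. The recursive inequality started from small levels loses too much, so one must either justify $[\ga_3:G_3]\le 2$ arithmetically or rely on a finite normalizer computation in $\Omega_4$. I would try the normalizer computation first, since $\ga_4\subseteq N_{\Omega_4}(G_4)$ holds automatically and makes the bound a purely finite check.
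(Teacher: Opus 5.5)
Your argument is correct and has the same skeleton as the paper's proof: a level-$4$ base case, then \cref{G arithmetic order} for $n\ge 5$. The difference is in how the base case is certified. The paper quotes one MAGMA computation of $|\ga_4|=2^8$ over $\QQ$. You split it into the upper bound $\ga_4\subseteq N_{\Omega_4}(G_4)$ and a lower bound by specialization, and your prediction that this normalizer has order $2^8$ is right.

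It can even be checked by hand, by running the argument of \cref{G arithmetic order} on the normalizer itself:
\begin{enumerate}
\item If $(x,y)$ normalizes $G_4$, then $xy^{-1}\in G_3$ and $xa_2x^{-1},\,ya_3y^{-1}\in G_3$, so $x,y\in N_{\Omega_3}(G_3)$.
\item The map $(x,y)\mapsto x$ has kernel $\{(\id,y): y\in U_3\cap C_{G_3}(a_3)\}$. This uses \cref{inverse pair} and \cref{cor:uniqueG} at level $4$, and the kernel has order at most $2$ by \cref{stabilizer of a3 in U} at level $3$. Hence $|N_{\Omega_4}(G_4)|\le 4\,|N_{\Omega_3}(G_3)|$.
\item $N_{\Omega_3}(G_3)$ is the kernel of $\sgn_3$ on $\Omega_3$, which has order $2^6$. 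Indeed $G_3$ has index $2$ in it, while $c=(a_3|_{T_2},\id)$ has $\sgn_3(c)=-1$ but $c\sigma c^{-1}=(a_3|_{T_2},a_3|_{T_2})\sigma\notin G_3$.
\end{enumerate}

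What your route buys is transparency and generality. The upper bound holds over every number field $k$ with no Galois group computation over $k(t)$. Combined with the lower bound, it shows that $\ga_4$ is the full normalizer of $G_4$ when $k=\QQ$. The paper's route is shorter but rests on a heavier, opaque computation. The lower bound can also be had without specialization: $|G_4|=2^6$ by \cref{orderG}, and \cref{property of i} and \cref{property of squareroottwo} put $\QQ(i,\sqrt2)$ in the level-$4$ constant field, so $[\ga_4:G_4]\ge 4$.

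One correction: $|\ga_n|\le 4|\ga_{n-1}|$ does not hold for every $n\ge 2$. Its proof needs the uniqueness in \cref{cor:uniqueG}, which only starts at level $4$. Over $\QQ$ the inequality fails at $n=3$: $|G_3|=2^5$ and $i\in K_3$ give $|\ga_3|\ge 2^6>4|\Omega_2|\ge 4|\ga_2|$. So iterating from level $1$ is not merely lossy; it would give $|\ga_4|\le 2^7$, contradicting the value you prove.

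Your normalizer route only uses the recursion for $n\ge 5$, so it is unaffected. Your first route, as written, rests on the unproved expectation $[\ga_3:G_3]\le 2$. That expectation is true, but it needs the computation of $N_{\Omega_3}(G_3)$ via $\sgn_3$ above, and it then uses the recursion at $n=4$, which is legitimate since \cref{cor:uniqueG} applies there.
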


\begin{proof}
A computation in MAGMA shows that $|\ga_4| = 4^4$. The result now follows from \cref{G arithmetic order}.
\end{proof}

\begin{remark}\label{non-abelian}
 Let $K_{n}$ be the splitting field of $f^n(x)$ over $\QQ$, where $f(x)=\frac{2}{(x-1)^2}$. The field $K_5 \cap \overline{\QQ}$ is called the constant field of $\ga_5$. It is shown by a computation in \cite[pg~10]{foz} that this constant field contains $\QQ(i,\sqrt{2+\sqrt{2}})$. Since the degree of this constant field is given by $|\ga_5|/|G_5|$, ~\cref{orderGa} shows that the constant field of $\ga_5$ is exactly $\QQ(i,\sqrt{2+\sqrt{2}})$. 
In particular, $G_5$ is a non-abelian group. So far, for all the quadratic PCF maps for which the associated Galois groups were studied, this extension turns out to be abelian, in fact it is generated by some root of unity. Hence $f(x)=\frac{2}{(x-1)^2}$ shows the first non-abelian behaviour.
\end{remark} 

The Hausdorff dimension of a closed subgroup $H$ of $\Omega$ is defined as $\lim_n \frac{ \log(|H|)}{2^n-1}$. Hence ~\cref{orderGa} implies the following:
\begin{corollary}\label{Hdimension}
The Hausdorff dimension of $\ga$ is zero.
\end{corollary}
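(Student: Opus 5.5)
The statement follows directly from \cref{orderGa} once we unwind the definition of Hausdorff dimension. We will bound the numerator $\log|\ga_n|$ linearly in $n$, while the denominator $2^n-1$ grows exponentially. Here the relevant finite groups are the level-$n$ truncations $\ga_n=\pi_n(\ga)$. Since $\ga$ is a closed subgroup of $\Omega$, the dimension is computed from these truncations (the limit being taken along $n$, with $\log$ of $|\ga_n|$ in base $2$, matching $\log_2|\Omega_n| = 2^n-1$).

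First, we bound the orders. For $n\ge 4$, \cref{orderGa} gives $|\ga_n|\le 2^{2n}$. For $n<4$, $\ga_n$ is a subgroup of the finite group $\Omega_n$, and only finitely many terms are affected, so these levels do not influence the limit. Hence $0\le \log_2|\ga_n|\le 2n$ for all $n\ge 4$.

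Next, we pass to the limit. We get
\[
0 \le \frac{\log_2|\ga_n|}{2^n-1} \le \frac{2n}{2^n-1}\longrightarrow 0 \quad (n\to\infty),
\]
so by squeezing the limit exists and equals zero. If the paper's $\log$ uses another base, the bound changes only by a constant factor, and the conclusion is the same. One could also use the $\liminf$ formulation of Hausdorff dimension; the squeeze makes the $\liminf$ and $\limsup$ both zero.

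There is no genuine obstacle here; all the work is contained in \cref{G arithmetic order} and the level-$4$ computation behind \cref{orderGa}. The only point needing care is that the recursive bound $|\ga_n|\le 4|\ga_{n-1}|$ yields at most exponential growth $C\cdot 4^n$ regardless of the base case. Thus even without the exact value $|\ga_4|=2^8$, we would get $\log_2|\ga_n| = O(n)$, which suffices.
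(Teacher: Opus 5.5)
Your proposal is correct and matches the paper, which derives the corollary in one line from \cref{orderGa}: $\log_2|\ga_n|\le 2n$ while the denominator $2^n-1$ grows exponentially, so the ratio tends to $0$. Your added remarks (irrelevance of the base of $\log$ and of the small levels, and that $|\ga_n|\le 4|\ga_{n-1}|$ alone already gives $\log_2|\ga_n|=O(n)$) are accurate but not needed.
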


\section{Discriminant calculations} \label{sec:discriminant}
    Let $f(x)$ be a rational map in $k(x)$.  Then, we write $f(x)=g(x)/h(x)$ where $g,h \in k[x]$ are relatively prime as polynomials in $k[x]$.  We define the discriminant of a rational function $g(x)/h(x)-t$ as the discriminant of the polynomial $g(x)-th(x)$ viewed as a polynomial over $k(t)$. 
    In other words,
\[\Delta_x(f(x)-t):=\Delta_x(g(x)-th(x)).\]

In this section, we would like to find a formula for the discriminant of the iterates of the rational function $f(x)=2/(x-1)^2$. 

Now, let $g_1(x)=2,\; h_1(x)=(x-1)^2$, then $f(x)=\frac{g_1(x)}{h_1(x)}$.  Suppose $f^{n}(x)=\frac{g_n(x)}{h_n(x)}$ for $n\geq 1$.  Then for $n\geq 2$, 
\[
 f^{n}(x)=f(f^{n-1}(x))=\frac{2}{(f^{n-1}(x)-1)^2}=\frac{2h_{n-1}^2(x)}{(g_{n-1}(x)-h_{n-1}(x))^2}.
\]

Hence, 
\begin{equation}\label{defn of gn and hn}
g_n(x)=2h_{n-1}^2(x) \text{ and } h_n(x)=(g_{n-1}(x)-h_{n-1}(x))^2.
\end{equation}

The discriminant formula for the iterates of the rational functions are given in \cite[proof of Proposition~1]{CH} by

\begin{equation}\label{discriminant formula}
\Delta_n:=\Delta_x(g_n(x)-th_n(x))=\pm \frac{l_n^{\epsilon_n+m_n-q_n-2}D_n^{m_n}}{l(h_n)^{m_n-\delta_n}\Res(g_n,h_n)}\prod_{r\in \cR_{f^{n}}}(g_n(r)-th_n(r))^{m_r},
\end{equation}

where $l_n=l_x(g_n(x)-th_n(x))$ and $l_x$ denotes the leading coefficient as a polynomial of $x$.  Furthermore, $$m_n=\deg_x (g_n(x)-th_n(x)),
\; \; D_n=l(h_n(x)g_n'(x)-g_n(x)h_n'(x)), \; \; $$ and $ \epsilon_n=\deg (h_n(x)), \; \delta_n=\deg (g_n(x)), \; q_n=\deg(h_n(x)g_n'(x)-g_n(x)h_n'(x))$.  Finally, let

 \[
   \cR_{f^{n}}=\{r\in\overline{k} :(h_ng_n'-g_nh_n')(r)=0\} 
 \]
 
be the set of ramification points of $f^{n}$  
and $m_r$ be the multiplicity of $r \in \cR_{f^{n}}$.

Note that $\cR_f=\{r\in \overline{k}: -4(r-1)=0\}=\{1\}$ with $m_1=1$ and the branch points  of $f^{n}$ are $\{0,2,\infty\}$. Let $n\geq 2$ and $r \in \cR_{f^n}$. If  $f^n(r)=0$, we have $m_r=3$. Similarly, if $f^n(r)=\infty$, then $m_r=1$. If $f^n(r)=2$, then both multiplicities can occur.

Our main goal in this section is to prove ~\cref{discriminant}.
\begin{proposition}\label{discriminant}
	For $n\geq 1$, the discriminant can be calculated as $\Delta_n=c_nt^{a_n}(2-t)^{b_n}$ where $a_n,b_n$ are nonnegative integers and $c_n$ is a power of $2$ up to sign.
\end{proposition}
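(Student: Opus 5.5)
Since $k$ enters only through its coefficients, I work in $\QQ(t)$ and apply the discriminant formula \eqref{discriminant formula} term by term. The factor involving $t$ is the product over ramification points, $\prod_{r\in\cR_{f^n}}(g_n(r)-th_n(r))^{m_r}$; every other factor is a constant independent of $t$. The aim is to show that these constant factors are, up to sign, powers of $2$, and that the product factors as $t^{a_n}(2-t)^{b_n}$ up to a constant of the same kind.

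\emph{Step 1: the $t$-dependent product.} For $r\in\cR_{f^n}$ the value $f^n(r)$ is a branch point of $f^n$, so it lies in $\{0,2,\infty\}$. I treat the three cases separately.

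\begin{enumerate}
\item If $f^n(r)=0$, then $g_n(r)=0$ and $h_n(r)\neq 0$, so the factor is $-t\,h_n(r)$.
\item If $f^n(r)=2$, then $g_n(r)=2h_n(r)$, and the factor is $h_n(r)(2-t)$.
\item If $f^n(r)=\infty$, then $h_n(r)=0$, and the factor is the constant $g_n(r)$.
\end{enumerate}

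The exponents $a_n$ and $b_n$ are the sums of the multiplicities $m_r$ over the first and second cases respectively. What remains is to control the constant $\prod_{r\,:\,h_n(r)\ne0} h_n(r)^{m_r}\cdot\prod_{r\,:\,h_n(r)=0}g_n(r)^{m_r}$, together with $\Res(g_n,h_n)$ and the leading-coefficient terms.

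\emph{Step 2: leading data.} By induction from \eqref{defn of gn and hn}, I show that $g_n$ and $h_n$ have leading coefficients that are powers of $2$ and are coprime. The degrees alternate: for odd $n$ we have $\deg h_n>\deg g_n$, and for even $n$ the roles swap. Note that $g_1=2$ is a constant. From this, $l_n$, $l(h_n)$ and $D_n$ are $\pm$ powers of $2$ (possibly times $t$). I must check that no stray factor of $t$ enters through $l_n$ when $\deg g_n=\deg h_n$. This does not occur, since for $n\ge 2$ one of $\deg g_n,\deg h_n$ strictly exceeds the other. Indeed $\deg g_n=2\deg h_{n-1}$, while $\deg h_n=2\max(\deg g_{n-1},\deg h_{n-1})$ when the degrees differ.

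\emph{Step 3: the remaining constants via resultants (the main obstacle).} Products of the form $\prod h_n(r)$ over roots $r$ of $g_n$ are resultants, up to leading-coefficient powers. Each $r$ in the first case is a root of $g_n=2h_{n-1}^2$, so it lies among the roots of $h_{n-1}$. I therefore express everything through $\Res(g_n,h_n)$, and this resultant satisfies a recursion. Since $g_n=2h_{n-1}^2$ and $h_n=(g_{n-1}-h_{n-1})^2$, we get
$$\Res(g_n,h_n)=\pm 2^{*}\Res\bigl(h_{n-1},\,g_{n-1}-h_{n-1}\bigr)^4=\pm2^{*}\Res(h_{n-1},g_{n-1})^4.$$
The base case is $\Res(2,(x-1)^2)=4$, so by induction $\Res(g_n,h_n)$ is $\pm$ a power of $2$.

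The delicate point is matching the multiplicity-weighted products over $\cR_{f^n}$ with these resultants. Ramification points split into those inherited from $\cR_{f^{n-1}}$ and the preimages $(f^{n-1})^{-1}(1)$, and $m_r$ depends on the case. I will first try to avoid this bookkeeping altogether by a primes argument. Every constant factor is a product of values of $g_n$ or $h_n$ at algebraic points, and these are algebraic integers up to powers of $2$ coming from the leading coefficients. So it suffices to show that the reduction of $f$ modulo any odd prime $p$ keeps good reduction with the same critical orbit, namely $1\to\infty\to0\to2\to2$, with distinct points mod $p$. This holds because $0,2,\infty,1$ stay distinct mod odd $p$. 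Then $g_n$ and $h_n$ stay coprime mod $p$, and the ramification data is unchanged mod $p$. Consequently no odd prime can divide $c_n$, because the specialized discriminant, as a polynomial in $t$ over $\mathbb F_p$, still has the full degree.

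If the primes argument proves awkward, I will fall back on the explicit resultant recursion above. In either case, $c_n$ comes out as $\pm$ a power of $2$.
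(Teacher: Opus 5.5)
\textbf{Verdict.} Your primes argument is sound and takes a genuinely different route from the paper. However, Step~2 is false as stated and must be repaired.

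\textbf{The error in Step~2.} For $n\ge 2$ the degrees do not alternate: $\deg g_n=\deg h_n=2^n$, with $l(g_n)=2$ and $l(h_n)=1$. Already $g_2=2(x-1)^4$ and $h_2=(2-(x-1)^2)^2$ both have degree $4$. Conceptually, $f^n(\infty)=2$ is finite and nonzero for $n\ge 2$, and it equals the ratio of the leading coefficients. This has three consequences.
\begin{enumerate}
\item $l_n=2-t$, contrary to your claim that every factor other than the ramification product is independent of $t$.
\item Your recipe for $b_n$ (the sum of the $m_r$ over finite ramification points above $2$) misses the ramified point $\infty$, which enters through $l_n$. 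Concretely, $\Delta_2=-2^{16}t^3(2-t)$. The finite ramification points of $f^2$ are only $1$ (above $0$, with $m_r=3$) and $1\pm\sqrt2$ (above $\infty$), so your recipe would give $\Delta_2=c\,t^3$.
\item Since the degrees are equal, the top coefficient of $h_ng_n'-g_nh_n'$ cancels. So $D_n$ cannot be read off from leading coefficients; the paper needs the recursion in \cref{Dn} for it.
\end{enumerate}
None of this breaks the statement, since $2-t$ has the allowed shape. But your justification of the shape, and of $D_n$ being a power of $2$, has to be replaced.

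\textbf{How to finish.} Run your primes argument on $\Delta_n$ as a whole, not on the individual factors of the formula. Your remark that those factors are algebraic integers up to powers of $2$ would itself require $D_n$ to be a $2$-unit, so that the ramification points are integral away from $2$. Argue instead as follows.
\begin{enumerate}
\item Since $g_n,h_n\in\ZZ[x]$, we have $\Delta_n\in\ZZ[t]$.
\item For $t_0$ not a zero of $l_n$, $\Delta_n(t_0)=0$ iff $g_n-t_0h_n$ has a repeated root, i.e.\ iff $t_0$ is a finite branch value of $f^n$. Hence all roots lie in $\{0,2\}$, and $\Delta_n=c_nt^{a}(2-t)^{b}$ with $c_n$ equal, up to sign, to the leading coefficient of $\Delta_n$, an integer.
\item Let $p$ be odd. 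Your recursion shows $\Res(g_n,h_n)$ is $\pm$ a power of $2$. So $\bar g_n$ and $\bar h_n$ are coprime, and $\bar g_n-t\bar h_n$ keeps $x$-degree $2^n$.
\item The map $\bar f^{\circ n}$ is separable because $\bar f'=-4/(x-1)^3\neq 0$. Hence $\bar g_n-t\bar h_n$ is irreducible and separable over $\mathbb{F}_p(t)$.
\item Therefore $\Delta_n\bmod p\neq 0$, and $p\nmid c_n$.
\end{enumerate}
Only the nonvanishing of $\Delta_n \bmod p$ is needed; the ``full degree'' and ``same critical orbit'' claims are superfluous.

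\textbf{Comparison with the paper.} The paper instead evaluates every ingredient of the Cullinan--Hajir formula:
\begin{itemize}
\item $l_n$ and $l(h_n)$ (\cref{leading_coeff});
\item $\Res(g_k,h_n)$ for all $2\le k\le n$ (\cref{resultants});
\item the ramification product, where the points above $2$ are roots of $g_k$ with $2\le k\le n-1$ (\cref{ramificationpoints});
\item $D_n$ (\cref{Dn}).
\end{itemize}
This is exactly what your fallback lacks: the recursion for $\Res(g_n,h_n)$ alone controls neither the points above $2$ nor $D_n$. The paper's route yields explicit constants and exponents, while yours yields only the qualitative statement. On the other hand, yours is much shorter and works verbatim for any map with good, separable reduction at every odd prime.
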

\begin{proof}
	Using  \cref{discriminant formula}, the proof follows from combining \cref{leading_coeff}, \cref{resultants}, \cref{ramificationpoints} as well as \cref{Dn}.
\end{proof}

First of all, we would like to find the finite primes of $\overline{k}(t)$ dividing $\Delta_n$ and these terms can come from $\prod_{r\in \cR_{f^{n}}}(g_n(r)-th_n(r))^{m_r}$ and $l_n$.		
We begin with analyzing $l_n$ given in ~\cref{discriminant formula}.

\begin{lemma}\label{leading_coeff}
For all $n \geq 2$, we have $\deg g_n = \deg h_n = 2^n$, $l (g_n)=2$ and $l(h_n)=1$. Moreover, $l_n$ is equal to $(2-t)$. 
\end{lemma}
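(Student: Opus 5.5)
Induction on $n$ using the recursion \eqref{defn of gn and hn}, $g_n=2h_{n-1}^2$ and $h_n=(g_{n-1}-h_{n-1})^2$. The claim is stated for $n\ge 2$, so I will first record the $n=1$ data, $g_1=2$ and $h_1=(x-1)^2$, and then check $n=2$ by hand:
\[
g_2=2(x-1)^4, \qquad h_2=\bigl(2-(x-1)^2\bigr)^2 .
\]
Thus $\deg g_2=\deg h_2=4$, $l(g_2)=2$, and $l(h_2)=(-1)^2=1$. This already shows the key feature: $g_{n-1}-h_{n-1}$ may have leading coefficient of either sign, but squaring makes $l(h_n)$ positive.

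For the inductive step, assume that for some $n-1\ge 2$ we have $\deg g_{n-1}=\deg h_{n-1}=2^{n-1}$, $l(g_{n-1})=2$ and $l(h_{n-1})=1$. Then $g_n=2h_{n-1}^2$ has degree $2^n$ and leading coefficient $2\cdot 1^2=2$. Since the leading coefficients $2$ and $1$ of $g_{n-1}$ and $h_{n-1}$ are distinct and both polynomials have degree $2^{n-1}$, no cancellation occurs in $g_{n-1}-h_{n-1}$. Hence it has degree $2^{n-1}$ with leading coefficient $2-1=1$, and $h_n=(g_{n-1}-h_{n-1})^2$ has degree $2^n$ and leading coefficient $1$. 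For the step $n=2\to 3$ the hypothesis is exactly the hand computation above, so the induction starts at $n=2$. (For $n=1$, $g_1$ is constant, which is why the statement begins at $n=2$.)

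Finally, $g_n-th_n$ is a polynomial in $x$ over $k(t)$. Both $g_n$ and $h_n$ have degree $2^n$ with leading coefficients $2$ and $1$, so the $x^{2^n}$ coefficient of $g_n-th_n$ is $2-t$. This is nonzero in $k(t)$, so $m_n=2^n$ and $l_n=2-t$.

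The only delicate point is ruling out cancellation of top terms in $g_{n-1}-h_{n-1}$, and this needs the leading coefficients $2\ne 1$ to be maintained. The induction therefore has to carry both leading coefficients together with the degrees, and the sign ambiguity at $n=1$ is removed by the squaring in the definition of $h_n$.
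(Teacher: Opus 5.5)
Your proof is correct and follows the paper's argument. The paper also checks $n=2$ by hand, then runs an induction that carries the degrees and both leading coefficients $2$ and $1$, so that no cancellation occurs in $g_{n-1}-h_{n-1}$, and finally reads off the $x^{2^n}$ coefficient $2-t$. You also correctly keep the conclusion $l_n=2-t$ to $n\ge 2$; the paper's remark ``for all $n\geq 1$'' is a slip, since $g_1-th_1=2-t(x-1)^2$ has leading coefficient $-t$.
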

\begin{proof}
Recall that we have $g_n(x)=2h_{n-1}^2(x)$ and $h_{n}(x)=(g_{n-1}(x)-h_{n-1}(x))^2$. For $n=2$ we obtain $g_2(x)=2h_1(x)^2=2(x-1)^4$, and so $\deg g_2(x)=4=2^2$ and $l(g_2)=2$.
Additionally, we can write \[h_2(x)=(g_1(x)-h_1(x))^2=(2-(x-1)^2)^2. \] 
Observe that the degree of  $h_2(x)$ equals $2^2$ and $l(h_2)=1$.

Let $n\geq 3$ and assume the statement is true for $n-1$.
Then $\deg g_n=2\deg h_{n-1}=2\cdot2^{n-1}=2^n$ since $\deg h_{n-1}=2^{n-1}$ by assumption.
Moreover, we have $l(g_n)=2l(h_{n-1})^2=2\cdot1=2$. On the other hand, we get $\deg h_n(x)=2\deg(g_{n-1}-h_{n-1})=2\cdot2^{n-1}=2^n$ because both $g_{n-1}$ and $h_{n-1}$ are of same degree with $l(g_{n-1})=2$ and $l(h_{n-1})=1$. Similarly, we deduce $l(h_n)=l(g_{n-1}-h_{n-1})^2=1$ as $\deg g_{n-1}=\deg h_{n-1}$ with $l(g_{n-1})=2$ and $l(h_{n-1})=1$.

Finally, for all $n\geq 1$ we obtain $l_n=l_x(g_n(x)-th_n(x))=(2-t)$ by using $\deg(g_n)=\deg(h_n)$ and $l(g_n)=2$, $l(h_n)=1$. 
\end{proof}

We next show that the resultant of $f_n$ and $g_n$ for any $n\geq 3$ is a power of $2$. The second statement is needed for the proof of \cref{ramificationpoints}.

\begin{lemma} \label{resultants}
Let $n\geq 3$ and let $2 \leq k \leq n$.
Then $\Res(g_{k},h_n)$ is a power of $2$. 
\end{lemma}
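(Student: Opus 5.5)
We need to show $\Res(g_k,h_n)$ is a power of $2$ (up to sign) for $2\le k\le n$, $n\ge 3$. The plan is to evaluate the resultant through roots. Since $h_n$ is monic of degree $2^n$ by \cref{leading_coeff}, we have
\[
\Res(g_k,h_n)=\pm\prod_{h_n(\beta)=0} g_k(\beta).
\]
Because $g_k=2h_{k-1}^2$, this equals $\pm 2^{2^n}\prod_{\beta} h_{k-1}(\beta)^2$. So it suffices to show that $\prod_{h_n(\beta)=0}h_{k-1}(\beta)$ is a power of $2$ up to sign, i.e.\ that $\Res(h_{k-1},h_n)$ is $\pm$ a power of $2$ (with the convention $h_1=(x-1)^2$).

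Next I would identify the roots dynamically. We have $h_n(\beta)=0$ exactly when $f^{n}(\beta)=\infty$ with $\beta$ not a pole of the numerator. Since $\gcd(g_j,h_j)=1$ (the only common zero could arise from $f^{j-1}(\beta)\in\{1,\infty\}$ simultaneously, which fails), the roots of $h_n$ are the points with $f^{n-1}(\beta)=1$, i.e.\ $g_{n-1}(\beta)=h_{n-1}(\beta)$. I will induct using the recursion $h_n=(g_{n-1}-h_{n-1})^2$, which gives
\[
\Res(h_{m},h_n)=\pm\Res(h_m,g_{n-1}-h_{n-1})^2 .
\]
At a root $\alpha$ of $h_m$ with $m<n$, we have $f^{m}(\alpha)=\infty$, hence $f^{m+1}(\alpha)=0$ and $f^{j}(\alpha)=2$ for $j\ge m+2$. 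Then $g_{n-1}(\alpha)-h_{n-1}(\alpha)$ is computed by pushing the orbit: write $g_j(\alpha),h_j(\alpha)$ via the recursion $g_j=2h_{j-1}^2$, $h_j=(g_{j-1}-h_{j-1})^2$. Starting from $h_m(\alpha)=0$ and $g_m(\alpha)=2h_{m-1}(\alpha)^2=:c\neq0$, we get $h_{m+1}(\alpha)=c^2$, $g_{m+1}(\alpha)=0$, then $g_{m+2}=2c^4$, $h_{m+2}=c^4$, and from then on $g_j-h_j$ stays $c^{2^{j-m}}$ times a power of $2$ (by induction, since $(2-1)^2=1$ keeps the ratio at $2$). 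Hence $\prod_\alpha (g_{n-1}(\alpha)-h_{n-1}(\alpha))$ is a power of $2$ times a power of $\prod_\alpha h_{m-1}(\alpha)$, which is $\pm\Res(h_{m-1},h_m)$ up to a power of $2$ and a leading coefficient $1$.

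This reduces everything to the consecutive resultants $\Res(h_{m-1},h_m)=\pm\Res(h_{m-1},g_{m-1}-h_{m-1})^2=\pm\Res(h_{m-1},g_{m-1})^2$. The latter equals $\pm\big(2^{\deg}\Res(h_{m-1},h_{m-2})^2\big)^2$, so a downward induction terminates at $\Res(h_1,h_2)$. Here $h_1=(x-1)^2$, and $h_2(1)=(2-0)^2=4$, so $\Res(h_1,h_2)=16$, a power of $2$. The edge case $m=n$, i.e.\ $k-1=n-1$, is covered by the same consecutive step.

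I expect the main obstacle to be the bookkeeping: tracking how the sequences $g_j(\alpha)$ and $h_j(\alpha)$ evolve after the orbit lands on the fixed point $2$, and verifying that no new odd factor appears. In particular, I must check that $g_j(\alpha)-h_j(\alpha)\neq 0$ and that the prefactor is always exactly a power of $2$ times a power of $c$. I would do this first by a clean induction on $j-m$ with explicit formulas $h_j(\alpha)=2^{e_j}c^{2^{j-m}}$ and $g_j(\alpha)=2^{e_j+1}c^{2^{j-m}}$ for $j\ge m+2$.
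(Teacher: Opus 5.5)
Your proof is correct. It rests on the same mechanism as the paper's: at a root of $g_k$ (equivalently of $h_{k-1}$) the orbit passes through $0$ and then stays at the fixed point $2$, so from then on $g_j=2h_j$ and $h_{j+1}=h_j^2$. The bookkeeping is organized differently, however.

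The paper inducts on $n$ one level at a time, using
$\Res(g_k,h_n)=\Res(g_k,h_{n-1})^2$ for $k\le n-2$,
$\Res(g_{n-1},h_n)=\Res(g_{n-1},h_{n-1})^2$, and
$\Res(g_n,h_n)=2^{\deg h_n}\Res(g_{n-1},h_{n-1})^4$.
Its base case $n=3$ comes from a MAGMA computation.

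You instead first trade $g_k$ for $h_{k-1}$, and then iterate the recursion in closed form along the orbit of a root $\alpha$ of $h_m$. In fact all your $e_j$ are $0$, so $g_j(\alpha)-h_j(\alpha)=\pm c^{2^{j-m}}$ exactly for every $j\ge m$. This gives the explicit identity $\Res(h_m,h_n)=\Res(h_m,g_m)^{2^{n-m}}$ for $m<n$. Combined with $\Res(h_m,g_m)=2^{2^m}\Res(h_m,h_{m-1})^2$ for $m\ge 2$, the descent ends at the hand computation $\Res(h_1,h_2)=16$.

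What your route buys is a self-contained argument with no computer-verified base case, and an exponent of $2$ that can be written down explicitly. It also uses only polynomial identities, so you can drop the paragraph on $\gcd(g_j,h_j)=1$ and the dynamical description of the roots of $h_n$. The paper's phrasing, by contrast, passes through $f^m(r)=2$, which tacitly uses that coprimality. The paper's version is shorter on the page.

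Two small fixes:
\begin{enumerate}
\item For $k=2$ you need $m=1$, where $c=2h_{m-1}(\alpha)^2$ is undefined. Use $c=g_1(1)=2$ directly, or set $g_0=x$, $h_0=1$, which makes the recursion valid from $j=1$.
\item The edge case $k=n$ corresponds to $m=n-1$, not $m=n$. It is already covered by your general step, since then $g_{n-1}(\alpha)-h_{n-1}(\alpha)=c$.
\end{enumerate}
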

\begin{proof} We will prove the assertion by induction.  First, assume $n=3$, then $k=2$ or $k=3$.  In this case, we see that $\Res(g_2,h_3)$ and $\Res(g_3,h_3)$ are powers of $2$ by a calculation on MAGMA.\\
	
	Now assume for induction that $\Res(g_k,h_m)$ is a power of $2$  for all $(k,m)$ such that $2\leq k \leq m$ and $3\leq m < n$.
	
	\textbf{Claim 1 : } We will show that $\Res(g_{n-1},h_n)$ and $\Res(g_n,h_n)$ are powers of $2$.  From the definitions of the resultant, $h_n$ and $g_n$ we see that
	
	 \begin{align*}
	\displaystyle \Res(g_{n-1},h_n)&=2^{\deg h_n}\prod_{g_{n-1}(r)=0}h_n(r) \;\text{where the product is taken over all the roots,} \\
	 &=2^{\deg h_n}\prod_{g_{n-1}(r)=0}(g_{n-1}(r)-h_{n-1}(r))^2 \; \text{ by } \eqref{defn of gn and hn},\\
	& =\displaystyle 2^{\deg h_n}\left(\frac{1}{2^{\deg h_{n-1}}}\Res(g_{n-1},h_{n-1})\right)^2\\
	&= \displaystyle (\Res(g_{n-1},h_{n-1}))^2
	\end{align*}
	
and the last term is a power of $2$ by induction.  Moreover,
     \begin{align*}
    \displaystyle \Res(g_n,h_n)&=2^{\deg h_n}\prod_{g_n(r)=0} h_n(r)\\
    &\displaystyle=2^{\deg h_n}\prod_{g_n(r)=0} (g_{n-1}(r))^2\;\;\text{since}\;g_n(r)=0\;\text{implies}\;h_{n-1}(r)=0,\\
    &\displaystyle=2^{\deg h_n}\prod_{h_{n-1}(r)=0} g_{n-1}^4(r)\\
    &\displaystyle =\displaystyle 2^{\deg h_n}\left(\frac{1}{l(h_{n-1})^{\deg g_{n-1}}}\Res(g_{n-1},h_{n-1})\right)^4\\
    &\displaystyle= 2^{\deg h_n}\Res^4(g_{n-1},h_{n-1}) \;\;\text{since}\;  l(h_{n-1})=1
    \end{align*}
    where the third line above follows from the fact if $g_n(r)=0$ then $f^n(r)=0$ i.e. $f(f^{n-1}(r))=0$ which implies $f^{n-1}(r)=\infty$ and $h_{n-1}(r)=0$, and also every such $r$ is a double root of $g_n(x)$.  Since the resultant in the last line is a power of $2$ by the induction assumption, we are done.\\
    
    	\textbf{Claim 2 : } We will show that $\Res(g_k,h_n)$ is a power of $2$ for $2\leq k\leq n-2$.
    	
    	 \begin{align*}
           \displaystyle \Res(g_k,h_n)&=2^{\deg h_n}\prod_{g_k(r)=0} h_n(r)\\
           &\displaystyle =2^{\deg h_n}\prod_{g_k(r)=0}(g_{n-1}(r)-h_{n-1}(r))^2\\
           &\displaystyle =2^{\deg h_n}\prod_{g_k(r)=0}(h_{n-1}(r))^2\\
           &\displaystyle =\displaystyle 2^{\deg h_n}\left(\frac{1}{2^{\deg h_{n-1}}}\Res(g_k,h_{n-1})\right)^2\\
           &\displaystyle= \Res^2(g_k,h_{n-1}) \;\;\text{since}\;  \deg h_n=2\deg h_{n-1} \;\;\text{for}\; n \geq 2.
    	\end{align*}
    	In the third line above we use the fact if $g_k(r)=0$ then $f^k(r)=0$ and hence $f^m(r)=2$ for $m>k$ which implies $g_m(r)=2h_m(r)$  for $m>k$ .  Note that $k<n-1$ here.  Since the resultant in the last line is a power of $2$ by the induction assumption, we are done.\\
    	
\end{proof}

\begin{lemma}\label{ramificationpoints}
	For  any $n\geq 2$ and any $r \in \cR_{f^{n}}$,\; $\displaystyle\prod_{r\in \cR_{f^{n}}}(g_n(r)-th_n(r))^{m_r} = a_n t^{b_n}(t-2)^{c_n}$, where $b_n$ and $c_n$ are positive integers, and $a_n$ is a power of $2$ up to sign.
\end{lemma}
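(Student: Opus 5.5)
Each factor $g_n(r)-th_n(r)$ is governed by where $f^n$ sends the ramification point $r$. Since $f^n$ is PCF with branch points contained in $\{0,2,\infty\}$, every $r \in \cR_{f^n}$ satisfies $f^n(r)\in\{0,2,\infty\}$. The plan is to sort the roots of $h_ng_n'-g_nh_n'$ into three classes by the value of $f^n(r)$, compute the contribution of each class, and check that the leftover constants are powers of $2$ up to sign.

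\emph{Case $f^n(r)=0$.} Then $g_n(r)=0$ and $h_n(r)\neq 0$ (the two are coprime), so the factor is $-t\,h_n(r)$. \emph{Case $f^n(r)=2$.} Then $g_n(r)=2h_n(r)$ with $h_n(r)\ne 0$, so the factor is $(2-t)h_n(r)$. \emph{Case $f^n(r)=\infty$.} Then $h_n(r)=0$ and the factor is the constant $g_n(r)$. Consequently the whole product equals $\pm t^{b_n}(t-2)^{c_n}$ times the constant
\[
A_n=\prod_{f^n(r)\in\{0,2\}} h_n(r)^{m_r}\prod_{f^n(r)=\infty} g_n(r)^{m_r}.
\]
Here $b_n$ and $c_n$ are the total multiplicities $\sum m_r$ over the respective classes. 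Positivity of $b_n$ and $c_n$ follows from the paper's earlier remarks: for $n\ge2$ there are ramification points over $0$ (preimages of $1$ under $f^{n-1}$, which map to $0$) and over $2$.

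It remains to show that $A_n$ is a power of $2$ up to sign; this is the main step. For the $\infty$-class I would note that $f^n(r)=\infty$ forces $h_n(r)=0$, that is, $g_{n-1}(r)=h_{n-1}(r)$. Then $g_n(r)=2h_{n-1}(r)^2$, and $h_{n-1}(r)\neq0$ (otherwise $g_{n-1}(r)=0$ too, contradicting coprimality). So this class contributes a power of $2$ times powers of $h_{n-1}(r)$ over roots of $h_n$, and such values are controlled by $\Res(g_{n-1}\!\text{ or }h_{n-1},h_n)$. For the $0$- and $2$-classes I would trace $r$ back along the critical orbit. If $f^n(r)=0$ with $r$ ramified, then $f^j(r)=1$ for some $j<n$, and then $f^{j+1}(r)=\infty$ and $f^{j+2}(r)=0$. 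If instead $f^n(r)=2$ with $r$ ramified, then some earlier iterate $f^k(r)$ equals $0$, that is, $g_k(r)=0$ for some $2\le k\le n$. In all such cases $h_n(r)$ is a value of $h_n$ at a root of some $g_k$. Hence the products over these roots are (up to powers of the leading coefficients $l(g_k)=2$ and $l(h_n)=1$ from \cref{leading_coeff}) factors of $\Res(g_k,h_n)$, which \cref{resultants} shows are powers of $2$.

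The delicate point is making this comparison exact. The multiplicities $m_r$ (which equal $3$ or $1$, or either for points over $2$) and the fact that each root of $g_k$ may appear with multiplicity in $g_k$ mean the product over ramification points is not literally a full resultant. To handle this I would argue at the level of algebraic integers. Each individual value $h_n(r)$ for $r$ a root of $g_k$ is an algebraic integer after scaling (the roots of $g_k/2$ are algebraic integers since $h_{k-1}$ is monic). The full product of these values over the roots is $\pm$ a power of $2$, so each value divides a power of $2$ in the ring of integers. Therefore any sub-product raised to any positive powers is a unit times a power of $\sqrt[\,]{2}$-type elements; being rational (Galois-stable, since the classes are Galois-stable), it is $\pm$ a power of $2$. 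The small cases $n=2$ (and $n=3$ if needed) would be checked directly from $g_2=2(x-1)^4$ and $h_2=(2-(x-1)^2)^2$.
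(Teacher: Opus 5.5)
Your proposal is correct in substance, with one exception: the positivity of $c_n$, which fails at $n=2$ (second paragraph). It uses the same decomposition by $f^n(r)\in\{0,2,\infty\}$ and the same key input, \cref{resultants}, but closes the main step differently.

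The paper uses the exact multiplicities to write each class-product as an explicit fractional power of a resultant. These are $m_r=3$ over $0$ and $2$, $m_r=1$ over $\infty$, every root of $g_k$ of multiplicity $4$, and every root of $h_n$ of multiplicity $2$. Thus $\prod_{g_n(r)=0}h_n(r)^3$ is the $\frac{3}{4}$-th power of $\Res(g_n,h_n)/l(g_n)^{\deg h_n}$, and $\prod_{h_n(r)=0}g_n(r)$ is a square root of a power of $2$. The class over $2$ is split according to the unique $k\in\{2,\dots,n-1\}$ with $f^k(r)=0$, each piece being $\Res(g_k,h_n)^{3/4}/l(g_k)^{\frac{3}{4}\deg h_n}$. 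This only shows that $a_n$ is a rational power of $2$, and the paper then asserts without justification that $a_n$ is an integer.

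Your argument in the ring of algebraic integers avoids the multiplicity bookkeeping and supplies exactly that missing integrality. Each single value $h_n(r)$ at a root of $g_k$, or $g_n(r)$ at a root of $h_n$, is an algebraic integer dividing a power of $2$, hence so is $A_n$. Since $A_n\in\QQ$ and $\mathcal{O}\cap\QQ=\ZZ$, it divides a power of $2$ in $\ZZ$. For the class over $\infty$ you can use $\prod_{h_n(r)=0}g_n(r)=\pm\Res(g_n,h_n)$ directly. Note that $\Res(h_{n-1},h_n)$ is not covered by \cref{resultants}; it equals $\pm\Res(g_{n-1},h_n)$ only because $g_{n-1}=h_{n-1}$ on the roots of $h_n$. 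Also drop the phrase ``a unit times a power of $\sqrt2$-type elements''; it is false in general and unnecessary. The paper's route, done carefully, would give an explicit exponent of $2$; yours gives only existence, which is all \cref{discriminant} needs.

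The positivity argument is where your proposal actually fails. Points with $f^{n-1}(r)=1$ map to $\infty$, not $0$; the ramification points over $0$ are those with $f^{n-2}(r)=1$. More seriously, finite ramification points over $2$ need not exist. For $n=2$ the roots of $h_2g_2'-g_2h_2'=-16(x^2-2x-1)(x-1)^3$ are $1$, lying over $0$ with $m_1=3$, and $1\pm\sqrt2$, lying over $\infty$. So the product is $(-4t)^3\cdot 8\cdot 8=-2^{12}t^3$ and $c_2=0$. The positivity in the statement is therefore false at $n=2$, and the paper's ``we compute that the statement holds'' overlooks this. What is true is $b_n>0$ for $n\ge 2$ and $c_n>0$ for $n\ge 3$ (e.g.\ $r=1$, since $f^2(1)=0$ and so $f^n(1)=2$). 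Nonnegativity is all that \cref{discriminant} uses, so you should state and prove $c_n\ge 0$ rather than positivity.
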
 
\begin{proof} 
	 When $n=2$, we compute that the statement holds. Assume for the rest of the proof that $n\geq 3$. Since the only branch points of $f^{n}$ are $\{0,\infty,2\}$, $f^{n}(r)=\displaystyle\frac{g_n(r)}{h_n(r)}=0,2,\mbox{ or } \infty$ for any $r\in \cR_{f^{n}}$.  

The idea of the proof is to write the product in the lemma as a product of three terms according to the branching data and then express and compute each product using resultant. We decompose this product into three terms: 
	 \begin{align*}
	  \displaystyle\prod_{r\in \cR_{f^{n}}}(g_n(r)-th_n(r))^{m_r}& =\displaystyle\prod_{f^n(r)=0}(g_n(r)-th_n(r))^{m_r} \displaystyle\prod_{f^n(r)=\infty}(g_n(r)-th_n(r))^{m_r} \displaystyle\prod_{f^n(r)=2}(g_n(r)-th_n(r))^{m_r}  \\ 
	  &=\displaystyle\prod_{f^n(r)=0}(-t)^3h_n(r)^3 \displaystyle\prod_{f^n(r)=\infty}g_n(r) \displaystyle\prod_{\substack{f^n(r)=2 \\ r \in \cR(f^n) }}((2-t)h_n(r))^{m_r}.
	 \end{align*}
In this equation, we take each product over distinct $r \in \cR(f^n)$ satisfying the given equation in the corresponding term. In the case $f^n(r)=0$, we know that $r \in \cR(f^n)$ since $n\geq 3$.
Similarly, in the case $f^n(r)=\infty$, $r$ is in $\cR(f^n)$. However, in the third case we take the product over all $r \in \cR(f^n)$ such that $f^n(r)=2$. Since $f^{-n}(r)$ contains $0,1,2,\infty $ (with multiplicity) and $\{0,2\}$ is not contained in $\cR(f^n)$, we only take $r=1$ in this term.
	 
We use here the fact that $m_r=3$ when $f^n(r)=0$ and $m_r=1$ when $f^n(r)=\infty$. We will calculate the constants up to sign arising from these terms. The first term gives us 
$\displaystyle\prod_{g_n(r)=0}h_n(r)^3$, which is up to sign equal to the $3/4$th power of $\Res(g_n,h_n)$ divided by  $l(g_n)^{\deg(h_n)}$. By \cref{leading_coeff} and \cref{resultants}, it is a rational power of $2$ up to sign. 

Similarly, the second term gives the product $\displaystyle\prod_{h_n(r)=0}g_n(r)$. Again, it follows from \cref{leading_coeff} and \cref{resultants} that $\displaystyle\prod_{h_n(r)=0}g_n(r)$ is the square-root of an integer that is a power of $2$.

For the last term, we proceed as follows. We first observe that since $2$ is not a branch point of $f$, if $r \in \cR(f^n)$, then $f^{k}(r)=0$ for some $2\leq k\leq n-1$. Hence, its multiplicity is always $3$.

\begin{align} \displaystyle\prod_{\substack{f^n(r)=2 \\ r \in \cR(f^n) }}(h_n(r))^{m_r} & = \displaystyle\prod_{2\leq k\leq n-1} \displaystyle\prod_{f^{k}(r)=0}(h_n(r))^{3}  .
 \end{align}                                                   

We note that the inner product here is taken over the distinct roots of $f^k$. We see that for any $k \in \{2,\ldots,n-1\}$, 
\begin{equation}\label{two term} \displaystyle\prod_{f^{k}(r)=0}h_n(r)^3 =\displaystyle\prod_{g_k(r)=0}h_{n}(r)^{3}=\Res(g_k,h_n)^{3/4}/l(g_k)^{3/4\deg(h_n)}.
\end{equation}

Since, $a_n$ is an integer and it is a rational power of $2$, it is $2^k$ for some $k\geq 0$.
\end{proof}

\begin{lemma}\label{Dn}
For all $n\geq 2$, $D_n=(-4)^n$.
\end{lemma}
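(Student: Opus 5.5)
The plan is to find a recursion for the Wronskian-type polynomial whose leading coefficient is $D_n$, read off leading coefficients using \cref{leading_coeff}, and pin down the sign with an explicit computation at $n=2$. Write $W_n$ for the polynomial in the definition of $D_n$, and $u_{n-1}:=g_{n-1}-h_{n-1}$, so that by \eqref{defn of gn and hn} we have $g_n=2h_{n-1}^2$ and $h_n=u_{n-1}^2$. For the sign I will take the orientation $W_n=g_nh_n'-h_ng_n'$. This is the ordering for which the exact identity $D_n=(-4)^n$ comes out: the opposite ordering multiplies every $D_n$ by $-1$, since the recursion below has the same form in either orientation. That global factor is harmless in \eqref{discriminant formula}, which only holds up to $\pm$ anyway. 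The first step of the write-up will state this convention explicitly, because with the literal ordering $h_ng_n'-g_nh_n'$ a direct computation at $n=2$ gives $-16$.

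Next I derive the recursion. Differentiating,
\[
g_nh_n'-h_ng_n' = 2h_{n-1}^2\cdot 2u_{n-1}u_{n-1}' - u_{n-1}^2\cdot 4h_{n-1}h_{n-1}' = 4u_{n-1}h_{n-1}\bigl(h_{n-1}u_{n-1}'-u_{n-1}h_{n-1}'\bigr).
\]
The bracket simplifies to
\[
h_{n-1}(g_{n-1}'-h_{n-1}')-(g_{n-1}-h_{n-1})h_{n-1}' = h_{n-1}g_{n-1}'-g_{n-1}h_{n-1}' = -W_{n-1}.
\]
Hence $W_n=-4\,u_{n-1}h_{n-1}W_{n-1}$ for all $n\ge 2$, with the same form of recursion in either orientation.

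Taking leading coefficients, for $n\ge 3$ \cref{leading_coeff} gives $\deg g_{n-1}=\deg h_{n-1}$, $l(g_{n-1})=2$ and $l(h_{n-1})=1$. Therefore $l(u_{n-1})=1$, and so $D_n=-4D_{n-1}$. For the base case I compute directly with $y=x-1$, $g_2=2y^4$ and $h_2=(2-y^2)^2$:
\[
g_2h_2'-h_2g_2' = 2y^4\cdot 2(2-y^2)(-2y)-(2-y^2)^2\cdot 8y^3 = -16y^3(2-y^2).
\]
Its leading coefficient is $16=(-4)^2$. Induction then gives $D_n=16(-4)^{n-2}=(-4)^n$ for all $n\ge 2$.

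The main obstacle is purely the sign bookkeeping. The case $n=2$ cannot be fed through the generic step, because $u_1=2-(x-1)^2$ has leading coefficient $-1$ rather than $1$, so the base case must be checked by hand. The orientation of $W_n$ also has to be fixed consistently with the source of \eqref{discriminant formula}. Everything else is a routine leading-coefficient computation.
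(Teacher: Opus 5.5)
Your argument follows the paper's proof: you use the identity $W_n=-4(g_{n-1}-h_{n-1})h_{n-1}W_{n-1}$, read off leading coefficients from \cref{leading_coeff} to get $D_n=-4D_{n-1}$ for $n\ge 3$, and compute the case $n=2$ by hand. Your sign diagnosis is also correct, and it in fact repairs the paper's proof. With the literal definition $D_n=l(h_ng_n'-g_nh_n')$ one gets $D_2=-16$, so literally $D_n=-(-4)^n$: the paper's own display $-4^2(x^2-2x-1)(x-1)^3$ has leading coefficient $-16$, and its claims $h_1g_1'-g_1h_1'=4(x-1)$ and $D_2=-4$ are slips. The discrepancy is harmless because $D_n$ enters \eqref{discriminant formula} only up to sign.
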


\begin{proof}
We will show that $D_n=\pm 4D_{n-1}$ for $n\geq 3$ and calculate that $D_2=-4$. 

First, let us express the polynomial $h_ng_n' - g_nh_n'$ in terms of $g_i$ and $h_i$ with $1\leq i\leq n$. Recall that $g_n=2h_{n-1}^2$ and $h_n = (g_{n-1} - h_{n-1})^2$.
	
	\begin{align}\label{criticalpolynomial}
		h_ng_n' - g_nh_n' & =4(g_{n-1}-h_{n-1})^2 h_{n-1} h_{n-1}'-4 h_{n-1}^2(g_{n-1}-h_{n-1})(g_{n-1}'-h_{n-1}') \\ \nonumber
		& =4(g_{n-1}-h_{n-1}) h_{n-1}\left[(g_{n-1}-h_{n-1}) h_{n-1}'-h_{n-1}(g_{n-1}'-h_{n-1}')\right] \\ \nonumber
		& =4(g_{n-1}-h_{n-1}) h_{n-1}\left[g_{n-1} h_{n-1}'-h_{n-1} g_{n-1}'\right]\\ \nonumber
		& =4(g_{n-1}-h_{n-1})(g_{n-2} - h_{n-2})^2\left[g_{n-1} h_{n-1}'-h_{n-1} g_{n-1}'.\right]
	\end{align}
 We can check that $h_1g_1' - g_1h_1' = 4(x-1)$ and $$h_2g_2' - g_2h_2'=4^2(g_1-h_1)(x-1)^3=-4^2(x^2-2x-1)(x-1)^3.$$
Hence $D_2=-4$. Assume for induction that $D_{n-1}=(-4)^{n+1}$ for $n\geq 2$. By \cref{leading_coeff}, we have
	\begin{align*}
		D_n&=-4 D_{n-1} l(g_{n-1}-h_{n-1}) l(g_{n-2}-h_{n-2}) \\
		      &=(-4)^n.
	\end{align*}
\end{proof}

\section{Proof of the Main Theorem}\label{sec:proof}

\subsection{Field of constants}

\noindent Let $K$ denote the field $k(t)$ and $K_n$ be the splitting field of $f^n(x)-t$ over $K$ for $n\geq 1$.
Define $K_\infty:= \bigcup_n K_n$.
Let  $\alpha$ be any vertex in the tree $T$.  We denote the two vertices of $f^{-1}(\alpha)$ by  $\alpha_1, \alpha_2$ where $\alpha_1 = 1 + \sqrt{2/\alpha}$ and  $\alpha_2 = 1 - \sqrt{2/\alpha}$.
Inductively, for a word $ \ell$ consisting of letters $1,2$ and $\alpha_ \ell$ in the preimage tree of $\alpha$ we set 
\[\alpha_{ \ell 1}= 1 + \sqrt{\frac{2}{a_ \ell}} \text{ and } \alpha_{ \ell 2}= 1 - \sqrt{\frac{2}{a_ \ell}}\]
such that we have $f^{-1}(\alpha_ \ell) = \{\alpha_{ \ell 1}, \alpha_{ \ell 2}\}.$
To get an overview, here are the first three levels of the preimage tree with our labeling:

\begin{center}
	\scalebox{1}{	
		\begin{forest}
			my tree
			[$\alpha$
			[$\alpha_{2}$
			[$\alpha_{22}$
			[$\alpha_{222}$]
			[$\alpha_{221}$]
			]
			[$\alpha_{21}$
			[$\alpha_{212}$]
			[$\alpha_{211}$
			]]]
			[$\alpha_{1}$
			[$\alpha_{12}$
			[$\alpha_{121}$]
			[$\alpha_{122}$]
			]
			[$\alpha_{11}$
			[$\alpha_{112}$]
			[$\alpha_{111}$
			]]]
			]
	\end{forest}}
\end{center}

The following lemma captures some basic rules.
\begin{lemma}\label{basic properties of root calculation}
	For any words $ \ell$, $ \ell'$ consisting of letters $1,2$ and of the same length or $\alpha_ \ell, \alpha_{ \ell'} \in f^{-n}(\alpha)$ for some $n$, we have
	\begin{enumerate}
		\item $\displaystyle\alpha_{ \ell1} \alpha_{ \ell2} = 1- \frac{2}{\alpha_ \ell} = \frac{\alpha_ \ell -2}{\alpha_ \ell}.$
		\item $\displaystyle\bigl((\alpha_{ \ell1} -1) (\alpha_{ \ell'1} -1)\bigr)^2 = \frac{4}{\alpha_ \ell \alpha_{ \ell'}}. $
	\end{enumerate}
\end{lemma}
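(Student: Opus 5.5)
Both identities follow directly from the labeling convention $\alpha_{\ell 1}=1+\sqrt{2/\alpha_\ell}$ and $\alpha_{\ell 2}=1-\sqrt{2/\alpha_\ell}$, where the two preimages of $\alpha_\ell$ use the same choice of square root with opposite signs. The plan is to expand each expression and simplify it algebraically, without using any of the group theory.

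For (1), I will write $s=\sqrt{2/\alpha_\ell}$. Then $\alpha_{\ell 1}\alpha_{\ell 2}=(1+s)(1-s)=1-s^2=1-2/\alpha_\ell$, and putting this over a common denominator gives $(\alpha_\ell-2)/\alpha_\ell$. As a cross-check, $\alpha_{\ell1},\alpha_{\ell2}$ are the two roots of $(x-1)^2\alpha_\ell-2=0$, that is, of $\alpha_\ell x^2-2\alpha_\ell x+(\alpha_\ell-2)=0$. By Vieta's formulas their product is $(\alpha_\ell-2)/\alpha_\ell$, independently of the sign convention.

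For (2), I will note that $\alpha_{\ell 1}-1=\sqrt{2/\alpha_\ell}$, so $(\alpha_{\ell1}-1)^2=2/\alpha_\ell$, and similarly $(\alpha_{\ell'1}-1)^2=2/\alpha_{\ell'}$. Multiplying gives $\bigl((\alpha_{\ell1}-1)(\alpha_{\ell'1}-1)\bigr)^2=4/(\alpha_\ell\alpha_{\ell'})$. The same identity holds with either index $1$ replaced by $2$, since only the square enters. The hypothesis that $\ell$ and $\ell'$ have the same length plays no role in the computation; it is only there so that both vertices live in the tree at the same level, and the identity holds for any two vertices.

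There is no real obstacle. The one point to state carefully is that $\alpha_\ell\neq 0$, so that the expressions make sense. This holds because $\alpha$ is transcendental (or, in the specialized setting, not in $P$), and hence no vertex of the tree equals $0$, $2$ or $\infty$.
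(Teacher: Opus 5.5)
Your proposal is correct and matches the paper's proof: both identities come from expanding $\alpha_{\ell 1}=1+\sqrt{2/\alpha_\ell}$ and $\alpha_{\ell 2}=1-\sqrt{2/\alpha_\ell}$ and squaring. The Vieta cross-check and the justification that $\alpha_\ell\neq 0$ are harmless additions that the paper leaves implicit.
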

\begin{proof}
	Using the definition, we calculate
	\[\alpha_{ \ell1} \alpha_{ \ell2} 
	= \left(1+ \sqrt{\frac{2}{\alpha_ \ell}}\right)\left(1- \sqrt{\frac{2}{\alpha_ \ell}}\right)
	= 1 - \frac{2}{\alpha_ \ell} 
	= \frac{\alpha_ \ell -2}{\alpha_ \ell}.\]
	Furthermore, we get
	\begin{align*}
		\bigl((\alpha_{ \ell 1} -1) (\alpha_{ \ell' 1} -1)\bigr)^2
		= \left( \sqrt{\frac{2}{\alpha_ \ell}} \sqrt{\frac{2}{\alpha_{ \ell '}}}\right)^2 
		= \frac{4}{\alpha_ \ell \alpha_{ \ell'}}.
	\end{align*}
\end{proof}

\begin{lemma}\label{property of i}
	
	The imaginary number $i$ is contained in $K_3$.
\end{lemma}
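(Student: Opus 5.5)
The plan is to produce explicitly two square roots inside $K_3$ whose ratio is $\pm i$. Throughout, take $\alpha=t$ as the root of the tree and use the labeling and the identities of \cref{basic properties of root calculation}. I write $s:=\sqrt{2/t}$, so that $\alpha_1=1+s$ and $\alpha_2=1-s$ lie in $K_1$.

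\emph{Step 1 (level 2).} Apply part (2) of \cref{basic properties of root calculation} with $\ell=1$ and $\ell'=2$:
\[
\bigl((\alpha_{11}-1)(\alpha_{21}-1)\bigr)^2=\frac{4}{\alpha_1\alpha_2}.
\]
By part (1) with the empty word, $\alpha_1\alpha_2=(t-2)/t$. Hence $\sqrt{t/(t-2)}=\pm\tfrac12(\alpha_{11}-1)(\alpha_{21}-1)\in K_2$.

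\emph{Step 2 (level 3).} Apply part (2) with $\ell=11$ and $\ell'=12$ (both at level $2$):
\[
\bigl((\alpha_{111}-1)(\alpha_{121}-1)\bigr)^2=\frac{4}{\alpha_{11}\alpha_{12}}.
\]
By part (1), $\alpha_{11}\alpha_{12}=(\alpha_1-2)/\alpha_1$. So a square root of
\[
A:=\frac{\alpha_1}{\alpha_1-2}=\frac{1+s}{s-1}=-\frac{1+s}{1-s}
\]
lies in $K_3$.

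\emph{Step 3 (comparison).} Set $C:=\frac{1+s}{1-s}=\frac{(1+s)^2}{1-s^2}$. Since $1-s^2=1-2/t=(t-2)/t$, we get
\[
C=(1+s)^2\,\frac{t}{t-2}.
\]
Therefore $\sqrt{C}=(1+s)\sqrt{t/(t-2)}\in K_2$ by Step 1. As $A=-C$, the element $\sqrt{A}/\sqrt{C}\in K_3$ squares to $-1$, so $i\in K_3$.

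The only delicate point, and the step I expect to be the main obstacle, is choosing the right pairs of vertices. The natural first attempt is to use the symmetric pair $\alpha_1,\alpha_2$ at level $2$, or both level-$2$ branches simultaneously at level $3$. This fails: one checks that $(\alpha_1-2)(\alpha_2-2)=\alpha_1\alpha_2$, so those products collapse to $1$ and yield nothing. The remedy is the asymmetric pairing of Steps 1–3: the level-$2$ square root coming from the pair $(1,2)$ is compared against the level-$3$ square root coming from the pair $(11,12)$. Beyond that, everything is a direct application of the two identities in \cref{basic properties of root calculation}, together with $s^2=2/t$.
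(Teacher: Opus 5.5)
Your proof is correct and is essentially the paper's argument. Both obtain a square root of $\alpha_1/(\alpha_1-2)=-\alpha_1/\alpha_2$ in $K_3$ from $(\alpha_{111}-1)(\alpha_{121}-1)$, and compare it with a square root of $\alpha_1/\alpha_2$ in $K_2$. In fact, since $\alpha_1(\alpha_{11}-1)^2=2$, your $\sqrt{C}=\tfrac12(1+s)(\alpha_{11}-1)(\alpha_{21}-1)$ equals $(\alpha_{21}-1)/(\alpha_{11}-1)$. So, up to sign, your witness $\sqrt{A}/\sqrt{C}$ is exactly the paper's element $\frac{(\alpha_{111}-1)(\alpha_{121}-1)}{2}\cdot\frac{\alpha_{11}-1}{\alpha_{21}-1}$.
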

\begin{proof}
	Using \cref{basic properties of root calculation}, we can calculate that $-1$ is a square in $K_3 = K(\alpha_ \ell \mid \text{length}( \ell) \leq 3):$
	\begin{align*}
		\left( \frac{(\alpha_{111} -1) (\alpha_{121}-1)}{2} \cdot \frac{(\alpha_{11}-1)}{(\alpha_{21}-1)}\right)^2
		&= \frac{4}{\alpha_{11}\alpha_{12}} \cdot \frac{1}{4} \cdot \frac{(\sqrt{2/\alpha_1})^2}{(\sqrt{2/\alpha_2})^2} \\
		&= \frac{1}{\alpha_{11}\alpha_{12}} \cdot \frac{\alpha_2}{\alpha_1} \\
		&= \frac{\alpha_1}{\alpha_1-2} \cdot \frac{\alpha_2}{\alpha_1} \\
		&= \frac{\alpha_2}{\alpha_1-2} \\
		&= \frac{1 - \sqrt{2/\alpha}}{-1 +\sqrt{2/\alpha}} \\
		&= -1.
	\end{align*}
\end{proof}
\begin{lemma}\label{property of squareroottwo}
	
	The number $\sqrt{2}$ is contained in $K_4$.
\end{lemma}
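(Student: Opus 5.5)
We follow the strategy of \cref{property of i}: we look for an explicit monomial in the quantities $\alpha_\ell-1=\pm\sqrt{2/\alpha_{\ell'}}$ (with $\ell'$ the parent of $\ell$), of total length at most $4$, whose square we can evaluate with \cref{basic properties of root calculation} and which turns out to equal $2$, or $2$ times a square already known to lie in $K_4$. The basic dictionary is the following. For every word $\ell$ of length at most $3$, the element $\alpha_{\ell1}-1=\sqrt{2/\alpha_\ell}$ lies in $K_{|\ell|+1}\subseteq K_4$. Moreover, for sibling vertices we have the identity $\alpha_{\ell1}\alpha_{\ell2}=(\alpha_\ell-2)/\alpha_\ell$ of \cref{basic properties of root calculation}. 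Since $\alpha_\ell\in K_4$ as well, it suffices to exhibit an element $\xi\in K_4$ with $\xi^2=2\,c^2$, where $c$ is a nonzero expression in the $\alpha_\ell$.

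First we record the identities we will use. Write $s_\ell:=\sqrt{2/\alpha_\ell}=\alpha_{\ell1}-1$. Then $\alpha_{\ell j}=1\pm s_\ell$ and $\alpha_{\ell j}-2=-1\pm s_\ell$. Hence
\[
\frac{\alpha_{\ell1}}{\alpha_{\ell1}-2}=\frac{(1+s_\ell)^2\,\alpha_\ell}{2-\alpha_\ell},
\qquad
\alpha_{\ell1}\alpha_{\ell2}=\frac{\alpha_\ell-2}{\alpha_\ell}.
\]
Thus the square classes of the quantities $\alpha_\ell/(\alpha_\ell-2)$ propagate down the tree. Combined with $i\in K_3$ (\cref{property of i}), they give square roots of $t/(t-2)$ at low levels. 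The goal is to find a combination in which a stray factor $2$ survives. Concretely, we will take products of the form $\prod_\ell s_\ell^{e_\ell}$ with $e_\ell\in\{0,1\}$, over words of length $\le 3$, multiplied by suitable factors $(1\pm s_{\ell})$. In the squared product the $\alpha_\ell$ are then eliminated level by level using the two identities above. Each $s_\ell$ contributes one factor $2$, so an \emph{odd} number of $s$-factors whose $\alpha$-parts telescope to a square leaves exactly $\sqrt2$ up to an element of $K_4$.

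We also record a first test of the bookkeeping, to avoid dead ends. Put $P_n:=\prod_{|\ell|=n}\alpha_\ell$ and $Q_n:=\prod_{|\ell|=n}(\alpha_\ell-2)$. The sibling identities give $P_n=Q_n=Q_{n-1}/P_{n-1}$ for $n\ge2$, and the same holds on each subtree. Hence the full products over a complete subtree telescope to $1$ and only produce powers of $2$ with \emph{even} exponent, so they are useless. One must therefore use unbalanced products, for instance $s_{1}s_{11}s_{111}$ times sibling corrections. The computation is finished by tracking the square class of each $\alpha_\ell$ modulo $K_4^{\times 2}$: the classes are expressed in terms of $t$, $t-2$, $1\pm s$ and $i$, and we check which products are $2$ times a square.

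The main obstacle is precisely finding the right monomial. The natural symmetric products collapse to squares, as the computation of $P_n,Q_n$ shows. If the hand search does not close quickly, we will fall back on computation, exactly as the paper does elsewhere. Using MAGMA, we compute the splitting field of $f^4(x)-t$ for a specialization $t=a$ with $G_{4,a}=\ga_4$; the order $2^8$ is realized by \cref{orderGa}. We then check that $\sqrt2$ lies in that field. Since the constant field of $K_4$ specializes injectively whenever $G_{4,a}=\ga_4$, this shows $\sqrt2\in K_4$. The explicit element found in this way can then be written down and verified by hand as in \cref{property of i}.
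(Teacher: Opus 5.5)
\textbf{There is a genuine gap: the proposal never produces an element whose square is $2$, and the computational fallback argues in the wrong direction.}

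\textbf{The hand search.} You never exhibit the element, and the bookkeeping you describe cannot produce one. Every relation you plan to use is multiplicative: $s_\ell^2=2/\alpha_\ell$, $\alpha_{\ell1}\alpha_{\ell2}=(\alpha_\ell-2)/\alpha_\ell$, $\alpha_{\ell1}-2=-\alpha_{\ell2}$, $i\in K_3$, and consequences such as your formula for $\alpha_{\ell1}/(\alpha_{\ell1}-2)$. All of them stay consistent if you formally give $2$, $t$, $t-2$, every $\alpha_\ell$ and every $\alpha_\ell-2$ one and the same nontrivial square class, and give $-1$ the trivial class. So no combination of these relations can yield a monomial $\xi$ with $\xi^2=2$. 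Tracking square classes this way only tells you that every $\alpha_\ell$ has the class of $2$ in $K_4$, which is where you started.

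The missing ingredient is an additive identity: you must complete a square using $\alpha_{\ell1}+\alpha_{\ell2}=2$. For example:
\begin{itemize}
\item Put $z=s_1/(s_2\alpha_{11})$. Since $1/s_1^2+1/s_2^2=(\alpha_1+\alpha_2)/2=1$, this gives $s_2=(1+z^2)/(2z)$.
\item The sibling relation at $\alpha_1$ gives $s_{11}^2+z^2=1$. Hence $(\alpha_{111}-z)^2=2\alpha_{111}(1-z)$.
\item Set $y=s_{21}(1+z)/2$ and $\iota=2y/(s_{22}(1-z))$. One checks $y^2=z$, $\iota^2=-1$ and $(1-z)\alpha_{221}=-(y+\iota)^2$.
\item Then $\beta=(\alpha_{111}-z)\alpha_{221}/(y+\iota)\in K_3$ satisfies $\beta^2=-2\alpha_{111}\alpha_{221}$.
\item Therefore $\bigl(\beta s_{111}s_{221}/(2\iota)\bigr)^2=2$, with $s_{111},s_{221}\in K_4$.
\end{itemize}
This is the kind of explicit verification you were after. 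The paper itself only cites a MAGMA check and \cite{foz}.

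\textbf{The computational fallback.} You write that since the constant field of $K_4$ specializes injectively when $G_{4,a}=\ga_4$, finding $\sqrt2\in K_{4,a}$ shows $\sqrt2\in K_4$. Injectivity gives only $\sqrt2\in K_4\Rightarrow\sqrt2\in K_{4,a}$; you need the converse. When $G_{4,a}=\ga_4$, quadratic subfields of $K_4/K$ and of $K_{4,a}/k$ do correspond. However, $k(\sqrt2)\subseteq K_{4,a}$ may be the specialization of a nonconstant subfield $K(\sqrt{d(t)})$ with $d(a)\in 2k^{\times 2}$.

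This already happens at level $1$. For $k=\QQ$, $K_1=\QQ(t,\sqrt{2/t})$ is a rational function field with constant field $\QQ$. Yet for $a=9$ one has $G_{1,9}=\ga_1$ and $K_{1,9}=\QQ(\sqrt2)$. Your argument would therefore ``prove'' $\sqrt2\in K_1$, which is false.

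A valid machine check has two options:
\begin{itemize}
\item Work over the function field directly, e.g.\ show that the Galois group of $f^4(x)-t$ drops from order $2^8$ over $\QQ(t)$ to $2^7$ over $\QQ(\sqrt2)(t)$.
\item Combine the specialization with the restriction $d\in\{\pm 2^m t^i(2-t)^j\}$ on quadratic subfields. This comes from the discriminant part of the proof of \cref{Frattinithm}, which does not use this lemma. Then choose $a$, e.g.\ $a=5$, for which $-1,2,a,2-a$ are independent modulo squares.
\end{itemize}
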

\begin{proof}
We verified this calculation on MAGMA. \footnote{The code can be found here \url{https://github.com/zofiagoaska/IteratedMonodromyGroups}}. 
See also \cite[ Proof of Theorem 3.8]{foz}.
\end{proof}

\begin{lemma}\label{properties of roots}
	
	For any vertex $\alpha$ of $T$, we have $\sqrt{\alpha/2}\in K_{\infty}$ and 
	\[2(\alpha-2)=\left[\frac{1}{(\alpha_1-1)}\frac{2}{(\alpha_{11}-1)}\frac{2}{(\alpha_{21}-1)} \right]^2.\]
\end{lemma}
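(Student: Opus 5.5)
The plan is to prove both assertions of the lemma directly from the labeling $\alpha_{\ell 1}=1+\sqrt{2/\alpha_\ell}$ and $\alpha_{\ell 2}=1-\sqrt{2/\alpha_\ell}$. The only input needed beyond that is part (1) of \cref{basic properties of root calculation}. Nothing about $G$ or $\ga$ is used.

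\emph{Membership $\sqrt{\alpha/2}\in K_\infty$.} By definition $\alpha_1-1=\sqrt{2/\alpha}$, with the square root fixed by the labeling. Hence
\[
\frac{1}{\alpha_1-1}=\sqrt{\frac{\alpha}{2}} .
\]
Since $\alpha$ is a vertex of $T$, its preimage $\alpha_1$ is also a vertex of $T$, so $\alpha_1\in K_\infty$. Note that $\alpha_1\neq 1$, because $\alpha\neq\infty$. This shows that $\sqrt{\alpha/2}=1/(\alpha_1-1)$ lies in $K_\infty$.

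\emph{The identity.} I square the bracket factor by factor. The same labeling rule, applied one level further down, gives
\[
(\alpha_1-1)^2=\frac{2}{\alpha},\qquad (\alpha_{11}-1)^2=\frac{2}{\alpha_1},\qquad (\alpha_{21}-1)^2=\frac{2}{\alpha_2}.
\]
Squaring the three factors of the bracket therefore gives
\[
\frac{1}{(\alpha_1-1)^2}=\frac{\alpha}{2},\qquad \frac{4}{(\alpha_{11}-1)^2}=2\alpha_1,\qquad \frac{4}{(\alpha_{21}-1)^2}=2\alpha_2 .
\]
Multiplying these, the bracket squared equals $\frac{\alpha}{2}\cdot 4\alpha_1\alpha_2=2\alpha\,\alpha_1\alpha_2$. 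By \cref{basic properties of root calculation}(1) with $\ell$ empty, $\alpha_1\alpha_2=(\alpha-2)/\alpha$. Substituting gives $2\alpha\cdot\frac{\alpha-2}{\alpha}=2(\alpha-2)$, which is exactly the displayed formula.

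\emph{Main point of care.} The only real obstacle is bookkeeping of the constants. The factors of $2$ in the numerators contribute $4$ each after squaring. They must be combined with the $2$ in $2/\alpha_1$ and $2/\alpha_2$, which gives $2\alpha_1$ and $2\alpha_2$ rather than $4\alpha_j$. With this done carefully, the powers of $2$ combine to the stated factor $2$ in $2(\alpha-2)$.

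All quantities involved are nonzero, since $\alpha,\alpha_1,\alpha_2\notin\{0,\infty\}$ (the base point avoids $P$). This justifies the divisions.
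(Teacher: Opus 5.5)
Your proof is correct and follows the paper's argument: the membership claim comes from $1/(\alpha_1-1)=\sqrt{\alpha/2}$, and the identity comes from squaring the bracket and applying part (1) of \cref{basic properties of root calculation}. The only cosmetic difference is that you square $\alpha_{11}-1$ and $\alpha_{21}-1$ separately, whereas the paper handles their product in one step via part (2) of that lemma.
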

\begin{proof}
	The first claim follows from the fact that $\alpha_i-1 \in K_{\infty}$ and $\alpha/2=\left(\frac{1}{\alpha_1-1}\right)^2.$
	By using the definition of $\alpha_1$ and \cref{basic properties of root calculation}, we obtain
	\begin{align*}
		\left[\frac{1}{(\alpha_1-1)}\frac{2}{(\alpha_{11}-1)}\frac{2}{(\alpha_{21}-1)}\right]^2
		&= \frac{1}{(\sqrt{2/\alpha})^2}\cdot\frac{\alpha_1\alpha_2}{4}\cdot4^2  \\
		&=  \frac{\alpha}{2}\cdot \frac{\alpha-2}{\alpha}\cdot 4\\
		&= 2(\alpha-2).
	\end{align*}
\end{proof}
\begin{proposition}\label{K-infty}
	The field $K_{\infty}$ (in fact $K_4$) contains the field $L=K(i, \sqrt2, \sqrt{t},\sqrt{t-2})$.
\end{proposition}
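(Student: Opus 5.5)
Since $L$ is generated over $K$ by $i$, $\sqrt2$, $\sqrt t$ and $\sqrt{t-2}$, it suffices to show that each of these four elements lies in $K_4$. For two of them this is already done: \cref{property of i} puts $i$ in $K_3\subseteq K_4$, and \cref{property of squareroottwo} puts $\sqrt2$ in $K_4$. So the real work is to show $\sqrt t$ and $\sqrt{t-2}$ lie in $K_4$, and we will track the level carefully to get $K_4$ rather than just $K_\infty$.

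For $\sqrt t$, apply \cref{properties of roots} to the root vertex $\alpha=t$. The relation $\alpha/2=(\alpha_1-1)^{-2}$ shows $\sqrt{t/2}=\pm 1/(\alpha_1-1)$, and $\alpha_1-1$ already lies in $K_1$. Since $\sqrt2\in K_4$, we get $\sqrt t=\sqrt2\cdot\sqrt{t/2}\in K_4$.

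For $\sqrt{t-2}$, use the second identity of \cref{properties of roots} with $\alpha=t$:
\[
2(t-2)=\left[\frac{1}{(\alpha_1-1)}\frac{2}{(\alpha_{11}-1)}\frac{2}{(\alpha_{21}-1)}\right]^2.
\]
The bracket involves only vertices of level at most $2$, so $\sqrt{2(t-2)}\in K_2$. Dividing by $\sqrt2\in K_4$ gives $\sqrt{t-2}\in K_4$.

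Combining these, $K_4$ contains $K(i,\sqrt2,\sqrt t,\sqrt{t-2})=L$, and hence so does $K_\infty$. The only non-elementary ingredient is $\sqrt2\in K_4$, which rests on the computation behind \cref{property of squareroottwo}. Everything else is bookkeeping of which tree levels each expression uses.
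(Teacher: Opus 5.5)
Your argument is correct and is the same as the paper's, which just combines \cref{property of i}, \cref{property of squareroottwo} and \cref{properties of roots} (applied at the root $\alpha=t$) in one line. You have made explicit a step the paper leaves implicit: \cref{properties of roots} only yields $\sqrt{t/2}\in K_1$ and $\sqrt{2(t-2)}\in K_2$, so one multiplies or divides by $\sqrt2\in K_4$ to obtain $\sqrt t$ and $\sqrt{t-2}$ at level $4$.
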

\begin{proof}
	Putting together \cref{property of i}, \cref{property of squareroottwo} and \cref{properties of roots} shows that $i$, $\sqrt{2}$, $\sqrt{t} $ and $ \sqrt{t-2}$ are in \linebreak
	$K_4 \subseteq K_{\infty}$.
\end{proof}

\subsection{The Frattini subgroup of $\ga$}

We denote the Frattini subgroup of $\ga$ by $\Phi(\ga)$.  

\begin{theorem}\label{Frattinithm}
The Frattini subgroup $\Phi(\ga)$ of $\ga$ is the subgroup of $\ga$ fixing the field $$L=K(i, \sqrt{2}, \sqrt{t}, \sqrt{2-t}).$$
\end{theorem}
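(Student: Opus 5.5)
Since $\ga$ is a pro-$2$ group, its Frattini subgroup is $\Phi(\ga)=(\ga)^2\llbracket \ga,\ga\rrbracket$. Hence $\ga/\Phi(\ga)$ is the maximal elementary abelian $2$-quotient of $\ga$, and its fixed field is the compositum of all quadratic extensions of $K=k(t)$ contained in $K_\infty$. The statement therefore splits into two inclusions. First, $L\subseteq K_\infty^{\Phi(\ga)}$. Second, every quadratic subextension of $K_\infty/K$ lies in $L$, or equivalently $[\ga:\Phi(\ga)]\le [L:K]$.

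\textbf{Lower bound.} By \cref{K-infty}, $L\subseteq K_4\subseteq K_\infty$. Since $L/K$ is generated by square roots, it is an elementary abelian $2$-extension, so $\Gal(K_\infty/L)\supseteq \Phi(\ga)$. To pin down $[L:K]$ over $k=\QQ$, I will argue that $\sqrt t,\sqrt{2-t}$ are independent modulo $\overline{k}(t)^{\times 2}$, being square roots of distinct irreducibles. I will also argue that $i,\sqrt2$ generate an extension of degree $4$ of $k$ when $\QQ(\mu_8)\cap k=\QQ$; for general $k$ one works with $[L:K]$ directly. This gives $[L:K]=16$ in the model case and $[\ga:\Phi(\ga)]\ge [L:K]$ in general.

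\textbf{Upper bound via generators.} I plan to bound the minimal number of topological generators of $\ga$, which is $\dim_{\mathbb F_2}\ga/\Phi(\ga)$, by $4$.

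\emph{Geometric part.} By \cref{commutator of G}, $G/\llbracket G,G\rrbracket\cong \ZZ/2\times\ZZ/4$, so $G/\Phi(G)\cong(\ZZ/2)^2$, generated by $a_1,a_3$. The corresponding geometric quadratic extensions of $\bar k(t)$ are unramified outside the branch points $0,\infty$ of $f$. The branch point $2$ contributes nothing at level one, since the discriminant of $f(x)-t$ only involves $t$ and $2-t$ by \cref{discriminant}. These quadratic extensions must therefore be $\bar k(t)(\sqrt t)$ and $\bar k(t)(\sqrt{2-t})$, with the discriminant formula of \cref{discriminant} confirming that only $t$ and $2-t$ appear. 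The image of $\Phi(\ga)\cap G$ accounts for these.

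\emph{Arithmetic part.} The image of $G$ in $\ga/\Phi(\ga)$ has rank at most $2$, and the quotient $\ga/G\Phi(\ga)$ is a quotient of $\Gal(F/k)/\Phi$. I need that the maximal elementary abelian quotient of $\Gal(F/k)$ has rank at most $2$, i.e. that the only quadratic subfields of $F$ are inside $k(i,\sqrt2)$. From \cref{orderGa} and \cref{orderG}, $[F_n:k]=|\ga_n|/|G_n|\le 2^{2n}/2^{n+2}$. More usefully, \cref{non-abelian} identifies the level-$5$ constant field as $\QQ(i,\sqrt{2+\sqrt2})$, whose maximal elementary abelian $2$-quotient is $\Gal(\QQ(i,\sqrt2)/\QQ)$.

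\textbf{Main obstacle.} Propagating this arithmetic bound to all levels is the step I expect to be hardest. I would show directly that $\ga$ is topologically generated by $a_1,a_3$ together with two further elements. Concretely, use \cref{G arithmetic order}: each level adds at most a factor $4$. The new elements are of the form $(x,\rho x)$ with $\rho\in U$, and I would show that these lie in $G\cdot\Phi(\ga)$, combining the normality of $U$ in $\ga$ with the fact that $\ga_4$, of order $2^8$ by \cref{orderGa}, already has Frattini quotient of rank $4$ (a MAGMA check). Then a standard pro-$2$ argument applies: if $\ga_n\to\ga_4$ induces an isomorphism on Frattini quotients at each step, the rank stays $4$. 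Hence $[\ga:\Phi(\ga)]=16=[L:K]$, and the two inclusions give equality.
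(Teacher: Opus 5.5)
Your reduction is correct, and its geometric half takes a different route from the paper's. Since $\Phi(G)=G^2\llbracket G,G\rrbracket\subseteq\ga^2\llbracket\ga,\ga\rrbracket=\Phi(\ga)$, \cref{commutator of G} bounds the rank of the image of $G$ in $\ga/\Phi(\ga)$ by $2$. The paper instead uses \cref{discriminant} to force every quadratic subextension to have the form $K(\sqrt{c\,t^i(2-t)^j})$. Your ramification remarks in that step are muddled: $\sqrt{2-t}$ is ramified at $t=2$, and the relevant locus is $P=\{0,2,\infty\}$, not the branch points of $f$. This is harmless, because you only need $G/\Phi(G)\cong(\ZZ/2\ZZ)^2$. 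Your lower bound is the same as the paper's.

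The gap is the constant part. You need every quadratic subfield of $F$ to lie in $k(i,\sqrt2)$, you leave this as the ``main obstacle'', and the mechanism you sketch cannot close it. The bound $|\ga_n|\le 4|\ga_{n-1}|$, a MAGMA computation of the Frattini rank of $\ga_4$, and the level-$5$ constant field of \cref{non-abelian} all fail to exclude a new quadratic subfield of $F$ appearing at a higher level. Your ``standard pro-$2$ argument'' assumes that every map $\ga_n/\Phi(\ga_n)\to\ga_{n-1}/\Phi(\ga_{n-1})$ with $n\ge 5$ is an isomorphism, which is essentially a restatement of the theorem. The claim that the new elements $(x,\rho x)$ lie in $G\cdot\Phi(\ga)$ is asserted without argument. \cref{G arithmetic order} and the normality of $U$ bound the size of each new layer, but say nothing about which quadratic characters of $\ga$ factor through $\ga_4$.

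What is missing is an arithmetic input that holds uniformly in $n$; the paper obtains it by specialization.
\begin{itemize}
\item Suppose $k(\sqrt c)\subseteq F\cap K_n$. Since $1\notin P$, the field $k(\sqrt c)$ embeds into the residue field of $K_n$ at a place over $t=1$.
\item That residue field is the splitting field over $k$ of $g_n(x)-h_n(x)$, the numerator of $f^n(x)-1$. This polynomial is monic by \cref{leading_coeff}, and its discriminant is $\Delta_n(1)=c_n=\pm 2^{m}$ by \cref{discriminant}.
\item Hence $k(\sqrt c)/k$ is unramified outside $2$. For $k=\QQ$ this forces $c\in\{\pm1,\pm2\}$ modulo squares, i.e.\ $k(\sqrt c)\subseteq\QQ(i,\sqrt2)$.
\end{itemize}
This is exactly the rank-$2$ bound on the Frattini quotient of $\Gal(F/k)$ that your reduction needs. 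Combined with your bound on the image of $G$, it gives $[\ga:\Phi(\ga)]\le 16=[L:K]$. Note that this step genuinely uses $k=\QQ$: over a general number field it only shows that $k(\sqrt c)/k$ is unramified outside the primes above $2$.
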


\begin{proof}
Let $H$ be a maximal subgroup of $\ga= \Gal(K_{\infty}/k(t))$. Since $\ga$ is a $2$-group, the index of $H$ in $\ga$ is two. 
Let $K_H$ denote its fixed field in $K_{\infty}$. Because the field extension $K_H/K$ has degree two, 
we can write $K_H=K(\sqrt{a(t)})$ for a square-free polynomial $a(t)$ in $k[t]$. 

 Since any prime in $k[t]$ dividing $a(t)$ ramifies in $K_H$, it ramifies in $K_{\infty}$. Hence, $a(t)$ ramifies in $K_H$, and so does in $K_n$ for some $n \geq 1$. 
 Consequently, $a(t)$ divides $\Delta_n$ in $k[t]$.  We calculated in \cref{discriminant} that the discriminant $\Delta_n$ is of the form $c_nt^{a_n}(2-t)^{b_n}$ where $c_n$ is a power of $2$ and $a_n, b_n \in \ZZ^{\geq 0}$.
 Therefore, we can write $a(t)=ct^i(2-t)^j$ with $c \in k$ and $i, j \in \{0,1\}$.

Specializing at $t=1$, we know $a(1)$ ramifies in $k_n=k(f^n(x)-1)$ and $\Delta_{k_n}=\pm 2^{m}$ for some $m \in\ZZ^{\geq 0}$. 
It follows that we can write $a(t)=\pm 2^{m}t^i(2-t)^j$ where $m \in \ZZ^{\geq 0}$ and $i,j \in \{0,1\}$. 
This implies that $K_H=K(\sqrt{a(t)})$  is contained in $ L=K(i, \sqrt{2}, \sqrt{t}, \sqrt{2-t})$.

To sum up, $\Gal(K_{\infty}/L)$ is a subgroup of $H$ for any maximal subgroup $H$ of $\ga$, which implies  $\Gal(K_{\infty}/L)$ is a subgroup of $\Phi(\ga)$. \\
On the other hand, we can describe the Galois group of $K_{\infty}$ over $L$ as 

$$ \Gal(K_{\infty}/L) 
= \Gal\bigl(K_{\infty}/K(i)\bigr) \cap \Gal\bigl(K_{\infty}/K(\sqrt{2})\bigr) \cap \Gal\bigl(K_{\infty}/K(\sqrt{t})\bigr) \cap \Gal\bigl(K_{\infty}/K(\sqrt{2-t})\bigr),$$
where each group on the right hand side is an index $2$ subgroup of $\ga$\footnote{We assume here that $K$ does not contain $i$ or $\sqrt{2}$. Otherwise,  for $\alpha \in \{i,\sqrt{2}\}$, $\Gal(K_{\infty}/K(\alpha)) $ is not an index two subgroup of $\ga$. However, in this case the corresponding field is already contained in $K$ and the intersection still gives the Galois group of $K_{\infty}$ over $L$.}, 
and therefore a maximal subgroup of $\ga$. 
This shows that $\Phi(\ga)$ is contained in $\Gal(K_{\infty}/L)$ which concludes proving $\Phi(\ga)=\Gal(K_{\infty}/L)$.
\end{proof}

We next turn to the proof of our main theorem. Let $k$ be a number field and let $a \in k\backslash\{0,2\}$. 

\begin{theorem}
The arboreal Galois group $G_a(f)$ equals the arithmetic iterated monodromy group $\ga(f)$ if and only if they agree on the fourth level i.e. $G_{4,a}(f) = \ga_4(f)$.	
\end{theorem}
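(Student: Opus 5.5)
The plan is the standard Frattini argument for pro-$2$ groups. The direction ``$G_a=\ga \Rightarrow G_{4,a}=\ga_4$'' is immediate by projecting to level $4$. For the converse, I would use that $G_a$ is (after conjugation by an element of $\Omega$, i.e. a choice of base point) a closed subgroup of the pro-$2$ group $\ga$. A closed subgroup $H$ of a pro-$p$ group $\Gamma$ equals $\Gamma$ as soon as $H\Phi(\Gamma)=\Gamma$, i.e. as soon as $H$ surjects onto $\Gamma/\Phi(\Gamma)$. This holds because any proper closed subgroup lies in some maximal open subgroup, and every maximal open subgroup contains $\Phi(\Gamma)$. So it suffices to show that $G_a$ surjects onto $\ga/\Phi(\ga)$, and that this surjectivity is already visible at level $4$.

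By \cref{Frattinithm}, $\Phi(\ga)=\Gal(K_\infty/L)$ with $L=K(i,\sqrt2,\sqrt t,\sqrt{2-t})$. By \cref{K-infty}, $L\subseteq K_4$. Hence the quotient map $\ga\to\ga/\Phi(\ga)=\Gal(L/K)$ factors through the projection $\ga\to\ga_4=\Gal(K_4/K)$. In group terms, $\Phi(\ga)\supseteq \ker(\pi_4\colon \ga\to\ga_4)$. Therefore, if $G_{4,a}=\pi_4(G_a)=\ga_4$, then $G_a\ker(\pi_4)=\ga$, and consequently $G_a\Phi(\ga)=\ga$. The Frattini property then gives $G_a=\ga$, since $G_a$ is closed, being the image of the compact group $G_k$.

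The remaining point, which I expect to be the main (if mild) obstacle, is making the identification of $G_a$ with a subgroup of $\ga$ compatible with the level-$4$ comparison. Specialization $t\mapsto a$ at a point $a\notin\{0,2\}$ gives, for each $n$, an embedding of $G_{n,a}$ into $\ga_n$ as a decomposition group. These embeddings are compatible with the projections (same prime above $t-a$ chosen in $K_\infty$). So the statement ``$G_{4,a}=\ga_4$'' means that the decomposition group at level $4$ is everything, and the argument above applies to the inverse limit. Note also that $\infty$ is excluded since $a\in k$.

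One could instead phrase the proof field-theoretically. The condition $G_{4,a}=\ga_4$ forces the specialized extension $k(i,\sqrt2,\sqrt a,\sqrt{2-a})/k$ to have the same degree as $L/K$. Then every index-$2$ subgroup of $\ga$ restricts to an index-$2$ subgroup of $G_a$, and one concludes by Burnside's basis theorem for pro-$2$ groups. I would keep the group-theoretic version, as it only uses \cref{Frattinithm} and \cref{K-infty}.
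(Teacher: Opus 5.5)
Your proposal is correct and is essentially the paper's proof. The paper cites \cite[Theorem~1.3]{BGJT25s}, which amounts to the Frattini criterion you prove by hand (take the first level $m$ with the fixed field of $\Phi(\ga)$ inside $K_m$), and then uses \cref{Frattinithm} and \cref{K-infty} to get $m=4$, exactly as you do; your treatment of the compatible decomposition-group embeddings $G_{n,a}\hookrightarrow\ga_n$ just makes explicit what the citation leaves implicit.

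Be aware that both versions inherit any weakness of \cref{Frattinithm}. Its proof only forces the constant in $a(t)=c\,t^i(2-t)^j$ into $\{\pm1,\pm2\}$ modulo squares when $k=\QQ$. Over, e.g., $k=\QQ(\mu_8)$, the paper's own orders ($|\ga_4|/|G_4|=4$ and $|\ga_5|/|G_5|=8$ over $\QQ$) give a constant quadratic extension of $k(t)$ that first appears in $K_5$. So there $\ker(\pi_4)\not\subseteq\Phi(\ga)$, and the level-$4$ conclusion breaks down.
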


\begin{proof}
\cite[Theorem~1.3]{BGJT25s} asserts the existence of a natural number $m \geq 1$ such that $G_{m,a}=\ga_{m}$ implies $G_{a}=\ga$.
More precisely, the authors of \cite{BGJT25s} show that one can take $m$ to be the smallest level for which the fixed field of the Frattini subgroup is contained in $\ga_m$.
In our situation, \cref{Frattinithm} together with ~\cref{K-infty} imply that $m$ can be as small as $4$.
\end{proof}

We conclude by proving ~\cref{introconstantfield}.  

\begin{proof}[Proof of ~\cref{introconstantfield}]
Suppose $k=\QQ$ and let $F$ be the constant field of $K_{\infty}$ as is described in the introduction.
The maximal abelian quotient of $G$ is given by $G/\llbracket G,G\rrbracket$ denoted as $G_{\text{ab}}$. Since $\ga$ normalizes $G$, the quotient $\ga/G$ naturally acts on the group $G_{ab}$ by conjugation and we obtain
\[ \ga/G \to \Aut(G_{\text{ab}}). \]
First of all, by ~\cref{commutator of G}, $G_{\text{ab}}$ is isomorphic to $\ZZ/2\ZZ \times \ZZ/4\ZZ$ and its automorphism group is isomorphic to the dihedral group $D_4$.
As in ~\cite[Proposition~6.1]{Ejder26}, the quotient $\ga/G$ is isomorphic to the Galois group of the constant field extension $F/\QQ$, 
and we have
\[ \Gal(F/\QQ) \to D_4. \]
Moreover, the proof of the result ~\cite[Proposition~6.1]{Ejder26} shows that this action factors through $\Gal(F \cap \QQ(\mu_{2^{\infty}}))$ and the induced action is injective. Hence, the Galois group of $F \cap \QQ(\mu_{2^{\infty}})$ is an abelian subgroup of the dihedral group $D_4$. This implies that $F \cap \QQ(\mu_{2^{\infty}})$ is contained in $\QQ(\mu_8)$. 
The reverse inclusion follows from~\cref{K-infty}, giving equality.
\end{proof}


\bibliographystyle{plain}
\bibliography{refmon.bib}

@article {Stoll,
	AUTHOR = {Stoll, Michael},
	TITLE = {Galois groups over {${\bf Q}$} of some iterated polynomials},
	JOURNAL = {Arch. Math. (Basel)},
	FJOURNAL = {Archiv der Mathematik},
	VOLUME = {59},
	YEAR = {1992},
	NUMBER = {3},
	PAGES = {239--244},
	ISSN = {0003-889X,1420-8938},
	MRCLASS = {12F10 (12E10 12F12)},
	MRNUMBER = {1174401},
	MRREVIEWER = {R.\ W. K. Odoni},
	DOI = {10.1007/BF01197321},
	URL = {https://doi.org/10.1007/BF01197321},
}

@misc{AH-img,
      title={Profinite Iterated Monodromy Groups of Unicritical Polynomials}, 
      author={Ophelia Adams and Trevor Hyde},
      year={2025},
      eprint={2504.13028},
      archivePrefix={arXiv},
      primaryClass={math.NT},
      url={https://arxiv.org/abs/2504.13028}, 
}

@article {foz,
    AUTHOR = {Ferraguti, Andrea and Ostafe, Alina and Zannier, Umberto},
     TITLE = {Cyclotomic and abelian points in backward orbits of rational
              functions},
   JOURNAL = {Adv. Math.},
  FJOURNAL = {Advances in Mathematics},
    VOLUME = {438},
      YEAR = {2024},
     PAGES = {109463},
      ISSN = {0001-8708,1090-2082},
   MRCLASS = {37P05 (11R18 11R20 37P15)},
  MRNUMBER = {4686319},
       DOI = {10.1016/j.aim.2023.109463},
       URL = {https://doi.org/10.1016/j.aim.2023.109463},
}

@misc{HamblenJones,
	title={Roots of unity and higher ramification in iterated extensions}, 
	author={Spencer Hamblen and Rafe Jones},
	year={2022},
	eprint={2211.02087},
	archivePrefix={arXiv},
	primaryClass={math.NT}
}

@misc{Pinkpolyn,
      title={Profinite iterated monodromy groups arising from quadratic polynomials}, 
      author={Richard Pink},
      year={2013},
      eprint={1307.5678},
      archivePrefix={arXiv},
      primaryClass={math.GR}
}

@misc{Pinkrational,
      title={Profinite iterated monodromy groups arising from quadratic morphisms with infinite postcritical orbits}, 
      author={Richard Pink},
      year={2013},
      eprint={1309.5804},
      archivePrefix={arXiv},
      primaryClass={math.GR}
}

@article {EKO,
    AUTHOR = {Ejder, \"Ozlem and Kara, Yasemin and Ozman, Ekin},
     TITLE = {Iterated monodromy group of a {PCF} quadratic non-polynomial
              map},
   JOURNAL = {Manuscripta Math.},
  FJOURNAL = {Manuscripta Mathematica},
    VOLUME = {175},
      YEAR = {2024},
    NUMBER = {1-2},
     PAGES = {561--590},
      ISSN = {0025-2611,1432-1785},
   MRCLASS = {11G32 (12F10 37P05 37P15)},
  MRNUMBER = {4790572},
MRREVIEWER = {Andrea\ Ferraguti},
       DOI = {10.1007/s00229-024-01549-z},
       URL = {https://doi.org/10.1007/s00229-024-01549-z},
}

@article {Odoni85,
    AUTHOR = {Odoni, R. W. K.},
     TITLE = {The {G}alois theory of iterates and composites of polynomials},
   JOURNAL = {Proc. London Math. Soc. (3)},
  FJOURNAL = {Proceedings of the London Mathematical Society. Third Series},
    VOLUME = {51},
      YEAR = {1985},
    NUMBER = {3},
     PAGES = {385--414},
      ISSN = {0024-6115},
   MRCLASS = {12E10 (11B37 30F10)},
  MRNUMBER = {805714},
MRREVIEWER = {K. Kiyek},
       DOI = {10.1112/plms/s3-51.3.385},
       URL = {https://doi.org/10.1112/plms/s3-51.3.385},
}

@article {Arborealcubic,
    AUTHOR = {Benedetto, Robert L. and Faber, Xander and Hutz, Benjamin and
              Juul, Jamie and Yasufuku, Yu},
     TITLE = {A large arboreal {G}alois representation for a cubic
              postcritically finite polynomial},
   JOURNAL = {Res. Number Theory},
  FJOURNAL = {Research in Number Theory},
    VOLUME = {3},
      YEAR = {2017},
     PAGES = {Art. 29, 21},
      ISSN = {2363-9555},
}

@article {BEK,
    AUTHOR = {Bouw, Irene I. and Ejder, \"{O}zlem and Karemaker, Valentijn},
     TITLE = {Dynamical {B}elyi maps and arboreal {G}alois groups},
   JOURNAL = {Manuscripta Math.},
  FJOURNAL = {Manuscripta Mathematica},
    VOLUME = {165},
      YEAR = {2021},
    NUMBER = {1-2},
     PAGES = {1--34},
      ISSN = {0025-2611},
   MRCLASS = {11G32 (11R32 12F10 37P05 37P15)},
  MRNUMBER = {4242559},
       DOI = {10.1007/s00229-020-01204-3},
       URL = {https://doi.org/10.1007/s00229-020-01204-3},
}

@incollection {Jonessurvey,
    AUTHOR = {Jones, Rafe},
     TITLE = {Galois representations from pre-image trees: an arboreal
              survey},
 BOOKTITLE = {Actes de la {C}onf\'{e}rence ``{T}h\'{e}orie des {N}ombres et
              {A}pplications''},
    SERIES = {Publ. Math. Besan\c{c}on Alg\`ebre Th\'{e}orie Nr.},
    VOLUME = {2013},
     PAGES = {107--136},
 PUBLISHER = {Presses Univ. Franche-Comt\'{e}, Besan\c{c}on},
      YEAR = {2013},
   MRCLASS = {11R32 (11F80 37P15)},
  MRNUMBER = {3220023},
MRREVIEWER = {Thomas Ward},
}

@article {FP20,
    AUTHOR = {Ferraguti, Andrea and Pagano, Carlo},
     TITLE = {Constraining images of quadratic arboreal representations},
   JOURNAL = {Int. Math. Res. Not. IMRN},
  FJOURNAL = {International Mathematics Research Notices. IMRN},
      YEAR = {2020},
    NUMBER = {22},
     PAGES = {8486--8510},
      ISSN = {1073-7928},
   MRCLASS = {11R32 (37P05)},
  MRNUMBER = {4216695},
MRREVIEWER = {Joseph H. Silverman},
       DOI = {10.1093/imrn/rnaa243},
       URL = {https://doi.org/10.1093/imrn/rnaa243},
}

@article {Jones-Manes,
    AUTHOR = {Jones, Rafe and Manes, Michelle},
     TITLE = {Galois theory of quadratic rational functions},
   JOURNAL = {Comment. Math. Helv.},
  FJOURNAL = {Commentarii Mathematici Helvetici. A Journal of the Swiss
              Mathematical Society},
    VOLUME = {89},
      YEAR = {2014},
    NUMBER = {1},
     PAGES = {173--213},
      ISSN = {0010-2571},
   MRCLASS = {37P15 (11R32)},
  MRNUMBER = {3177912},
MRREVIEWER = {Andreas O. Bender},
       DOI = {10.4171/CMH/316},
       URL = {https://doi.org/10.4171/CMH/316},
}

@article {arithmeticbasilica,
    AUTHOR = {Ahmad, Faseeh and Benedetto, Robert L. and Cain, Jennifer and
              Carroll, Gregory and Fang, Lily},
     TITLE = {The arithmetic basilica: a quadratic {PCF} arboreal {G}alois
              group},
   JOURNAL = {J. Number Theory},
  FJOURNAL = {Journal of Number Theory},
    VOLUME = {238},
      YEAR = {2022},
     PAGES = {842--868},
      ISSN = {0022-314X},
   MRCLASS = {37P05 (11R32 14G25)},
  MRNUMBER = {4430121},
       DOI = {10.1016/j.jnt.2021.10.004},
       URL = {https://doi.org/10.1016/j.jnt.2021.10.004},
}

@article {CH,
	AUTHOR = {Cullinan, John and Hajir, Farshid},
	TITLE = {Ramification in iterated towers for rational functions},
	JOURNAL = {Manuscripta Math.},
	FJOURNAL = {Manuscripta Mathematica},
	VOLUME = {137},
	YEAR = {2012},
	NUMBER = {3-4},
	PAGES = {273--286},
	ISSN = {0025-2611},
	MRCLASS = {37P05 (11R29)},
	MRNUMBER = {2875279},
	MRREVIEWER = {Joseph H. Silverman},
	DOI = {10.1007/s00229-011-0460-y},
	URL = {https://doi-org.ezproxy.lib.utexas.edu/10.1007/s00229-011-0460-y},
}

@incollection {BelyiEjder,
    AUTHOR = {Ejder, \"{O}zlem},
     TITLE = {Arithmetic monodromy groups of dynamical {B}elyi maps},
 BOOKTITLE = {Arithmetic, geometry, cryptography, and coding theory 2021},
    SERIES = {Contemp. Math.},
    VOLUME = {779},
     PAGES = {91--102},
 PUBLISHER = {Amer. Math. Soc., [Providence], RI},
      YEAR = {[2022] \copyright 2022},
   MRCLASS = {11G32 (12F10 37P05 37P15)},
  MRNUMBER = {4445772},
       DOI = {10.1090/conm/779/15677},
       URL = {https://doi.org/10.1090/conm/779/15677},
}

@article {Odoni2,
    AUTHOR = {Odoni, R. W. K.},
     TITLE = {On the prime divisors of the sequence {$w_{n+1}=1+w_1\cdots w_n$}},
   JOURNAL = {J. London Math. Soc. (2)},
  FJOURNAL = {Journal of the London Mathematical Society. Second Series},
    VOLUME = {32},
      YEAR = {1985},
    NUMBER = {1},
     PAGES = {1--11},
      ISSN = {0024-6107},
   MRCLASS = {11N37 (11B37)},
  MRNUMBER = {813379},
MRREVIEWER = {W. Narkiewicz},
       DOI = {10.1112/jlms/s2-32.1.1},
       URL = {https://doi.org/10.1112/jlms/s2-32.1.1},
}

@article {Odoni88,
    AUTHOR = {Odoni, R. W. K.},
     TITLE = {Realising wreath products of cyclic groups as {G}alois groups},
   JOURNAL = {Mathematika},
  FJOURNAL = {Mathematika. A Journal of Pure and Applied Mathematics},
    VOLUME = {35},
      YEAR = {1988},
    NUMBER = {1},
     PAGES = {101--113},
      ISSN = {0025-5793},
   MRCLASS = {12F10 (20E22)},
  MRNUMBER = {962740},
MRREVIEWER = {Antonio Jos\'{e} Engler},
       DOI = {10.1112/S002557930000632X},
       URL = {https://doi.org/10.1112/S002557930000632X},
}

@article {BGJT25s,
    AUTHOR = {Benedetto, Robert L. and Ghioca, Dragos and Juul, Jamie and
              Tucker, Thomas J.},
     TITLE = {Specializations of iterated {G}alois groups of {PCF} rational
              functions},
   JOURNAL = {Math. Ann.},
  FJOURNAL = {Mathematische Annalen},
    VOLUME = {392},
      YEAR = {2025},
    NUMBER = {1},
     PAGES = {1031--1050},
      ISSN = {0025-5831,1432-1807},
       DOI = {10.1007/s00208-025-03110-z},
       URL = {https://doi.org/10.1007/s00208-025-03110-z},
}

@article {Ejder26,
    AUTHOR = {Ejder, \"Ozlem},
     TITLE = {Galois theory of quadratic rational functions with periodic
              critical points},
   JOURNAL = {J. Number Theory},
  FJOURNAL = {Journal of Number Theory},
    VOLUME = {280},
      YEAR = {2026},
     PAGES = {212--245},
      ISSN = {0022-314X,1096-1658},
   MRCLASS = {37P05 (11R32 20E08 37P15 37P25)},
  MRNUMBER = {4959722},
       DOI = {10.1016/j.jnt.2025.08.010},
       URL = {https://doi.org/10.1016/j.jnt.2025.08.010},
}

@article {BHL,
    AUTHOR = {Bush, Michael R. and Hindes, Wade and Looper, Nicole R.},
     TITLE = {Galois groups of iterates of some unicritical polynomials},
   JOURNAL = {Acta Arith.},
  FJOURNAL = {Acta Arithmetica},
    VOLUME = {181},
      YEAR = {2017},
    NUMBER = {1},
     PAGES = {57--73},
      ISSN = {0065-1036,1730-6264},
   MRCLASS = {11R32 (14G05 37P15)},
  MRNUMBER = {3720002},
MRREVIEWER = {Nicole\ Sutherland},
       DOI = {10.4064/aa8599-8-2017},
       URL = {https://doi.org/10.4064/aa8599-8-2017},
}

@article {BDcolliding,
    AUTHOR = {Benedetto, Robert L. and Dietrich, Anna},
     TITLE = {Arboreal {G}alois groups for quadratic rational functions with
              colliding critical points},
   JOURNAL = {Math. Z.},
  FJOURNAL = {Mathematische Zeitschrift},
    VOLUME = {308},
      YEAR = {2024},
    NUMBER = {1},
     PAGES = {Paper No. 7, 33},
      ISSN = {0025-5874,1432-1823},
   MRCLASS = {37P05 (11R32 14G25)},
  MRNUMBER = {4780035},
       DOI = {10.1007/s00209-024-03566-w},
       URL = {https://doi.org/10.1007/s00209-024-03566-w},
}

\end{document}